\def\hB{\hspace*{\fill}$\qed$}
\def\heB{\hspace*{\fill}$\blacksquare$}
 \def\hrB{\hspace*{\fill}$\spadesuit$}
\title{Localization for coarse homology theories}
\author{
Ulrich Bunke\thanks{Fakult{\"a}t f{\"u}r Mathematik,
Universit{\"a}t Regensburg,
93040 Regensburg,
GERMANY\newline
ulrich.bunke@mathematik.uni-regensburg.de} 
 \and
Luigi Caputi  \thanks{Fakult{\"a}t f{\"u}r Mathematik,
Universit{\"a}t Regensburg,
93040 Regensburg,
GERMANY\newline
luigi.caputi@unibo.it
}
}
\numberwithin{equation}{section}
\newtheorem{theorem}{Theorem}[section] 
\newtheorem{prop}[theorem]{Proposition}
\newtheorem{lem}[theorem]{Lemma}
\newtheorem{ddd}[theorem]{Definition}
\newtheorem{kor}[theorem]{Corollary}
\newtheorem{prob}[theorem]{Problem}
\theoremstyle{remark}
\theoremstyle{definition}
\newtheorem{ex}[theorem]{Example}
\newtheorem{rem}[theorem]{Remark}
\newcommand{\nCalg}{C^{*}\mathbf{Alg}^{\mathrm{nu}}}
\newcommand{\KKG}{\mathrm{KK}^{G}}
\newcommand{\KKH}{\mathrm{KK}^{H}}
\newcommand{\KK}{\mathrm{KK}}
\newcommand{\kkG}{\mathrm{kk}^{G}}
\newcommand{\cFin}{\mathcal{F}in}
\newcommand{\Tw}{\mathbf{Tw}}
\newcommand{\Bredon}{\mathrm{Bredon}}
\newcommand{\All}{\mathcal{A}\ell\ell}
\newcommand{\UBC}{\mathbf{UBC}}
\newcommand{\Coind}{\mathrm{Coind}}
\renewcommand{\Ind}{\mathrm{Ind}}
\newcommand{\Yo}{\mathrm{Yo}}
\newcommand{\Res}{\mathrm{Res}}
\newcommand{\Orb}{\mathbf{Orb}}
\newcommand{\Hilb}{\mathbf{Hilb}}
\newcommand{\bQ}{\mathbf{Q}}
\newcommand{\BC}{\mathbf{BornCoarse}}
\newcommand{\Fin}{\mathcal{F}\mathit{in}}
\newcommand{\PSh}{{\mathbf{PSh}}}
\newcommand{\Add}{{\mathtt{Add}}}
\newcommand{\bA}{{\mathbf{A}}}
\newcommand{\cO}{{\mathcal{O}}}
\newcommand{\cY}{{\mathcal{Y}}}
\newcommand{\Spc}{\mathbf{Spc}}
\newcommand{\Ccat}{{\mathbf{C}^{\ast}\mathbf{Cat}}}
\newcommand{\Calg}{{\mathbf{C}^{\ast}\mathbf{Alg}}}
\renewcommand{\Add}{\mathbf{Add}}
\renewcommand{\colim}{\operatorname{colim}}\renewcommand{\lim}{\operatorname{lim}}
\begin{document}

\maketitle
\begin{abstract}
We  introduce the notion of a Bredon-style equivariant coarse homology theory. We  show that such a   Bredon-style equivariant coarse homology theory satisfies localization theorems {and that }
a general  equivariant coarse homology theory can be approximated by a Bredon-style version.  We discuss the special case of algebraic and topological equivariant coarse $K$-homology and obtain
the coarse analog of 
Segal's localization theorem.  
\end{abstract}

\tableofcontents
\section{Introduction}

This paper contributes to the study of general properties of equivariant coarse homology theories. In particular, we are interested in   localization theorems 
relating the value of a{n equivariant coarse homology} theory $E$ on a $G$-bornological coarse space with  the values on certain  fixed point  {subspaces.} 
These localization results are the analogues, for equivariant coarse homology theories, of the classical localization theorem  for equivariant K-cohomology theory by Segal \cite[{Prop.~4.1}]{segalk}.

In the present paper we use the language of  $\infty$-categories as developed by \cite{cisin}, \cite{Joyal} \cite{htt}.
For a group $G$,
the category  {$G\BC$} of $G$-bornological coarse spaces and the notion of an equivariant coarse homology theory  have been introduced in  \cite{equicoarse}. We will freely use the language developed therein.

  As a preparation for our localization results  we {first} show the Abstract Localization Theorem~I~\ref{wregfio324r3t3t4t} whose statement we {now} 
  explain. 
  Let  $G\Orb$ be the orbit category  of a group  {$G$}. For functors $E\colon G\Orb\to \Fun^{\colim}(\bC,\bD)$ and $X\colon G\Orb^{op}\to \bC$, where $\bC$ and~$\bD$ are cocomplete $\infty$-categories {and $\Fun^{\colim}$ denotes the $\infty$-category of colimit-preserving functors},  we   define the $\bD$-valued  pairing
as the coend
$$E^{G}(X):=\int^{G\Orb} E\circ X\ .$$
If $\cF$ is a  family of subgroups of $G$, $E$ vanishes on  $\cF$ {(see Definition \ref{fkjkjafsljashfgfasljh})}, and $X^{\cF}$
is the extension by zero of the restriction $X_{|\cF^{\perp}}$ of $X$ to the complement $\cF^{\perp}$ of $\cF$ {(in the family of all subgroups of $G$)}, then  the Abstract Localization Theorem I  \ref{wregfio324r3t3t4t} asserts that the morphism
$$E^{G}(X^{\cF})\to E^{G}(X)$$ induced by the canonical morphism $X^{\cF}\to X$  is an equivalence.  
    
For illustration, we first apply this theorem in the case of equivariant homotopy theory {(see Section \ref{hdagghagghag})}.
In this case  $E\colon G\Orb\to  \bC$ represents a  {homology theory with values in a}   cocomplete stable $\infty$-category $\bC$, $X\colon G\Orb^{op}\to \Spc$
is the equivariant homotopy type of a   $G$-topological space, and $E^{G}(X)$ is the evaluation of the {(associated)} equivariant homology theory {$E^G$} 
on $X$. Here, in order to define $E^{G}$, we use the equivalence $\Fun^{\colim}(\Spc,\bC)\simeq \bC$  given by {the} universal property of the $\infty$-category of spaces $\Spc$ and cocompleteness of~$\bC$.
A conjugacy class $\gamma$ in $G$ gives rise to the  family  $\cF(\gamma)$ of  all subgroups   of $G$ which intersect  $\gamma$ trivially, see Definition \ref{ioujiowrgrgrergergr}. The  
Abstract Localization Theorem II \ref{wriowwgwegewgewgewgw} asserts that (under a condition on the quality of the inclusion of the fixed-point set $X^{\gamma}\to X$) 
the {induced} morphism
$$E^{G}(X^{\gamma})\to  E^{G}(X)$$
is an equivalence provided $E$ vanishes on $\cF(\gamma)$.
This theorem is deduced from   the Abstract Localization Theorem I using excision.
 In Section \ref{giuhiuhuir32r23r232} we derive   the homological version of the  classical
 Segal Localization Theorem for equivariant topological $K$-homology $KU_{G}$, assuming that $G$ is a  finite group. The main point is to show that the functor
$KU_{G,(\gamma)}$ (the localization of $KU_{G}$ at $(\gamma)$) vanishes on $\cF(\gamma)$, where $(\gamma)$ is the ideal of the representation ring $R(G)$ consisting of elements whose trace vanishes on $\gamma$.

The main topic  of the present paper {is the study of} localization theorems for equivariant coarse homology theories. Recall that
in equivariant homotopy theory, as a consequence of {Elmendorf's} theorem, $\bC$-valued equivariant homology theories are the same as functors $E\colon G\Orb\to \bC$.  {Under the} identification 
$\bC\simeq \Fun^{\colim}(\Spc,\bC)$, 
we can interpret $E$ as a functor from $G\Orb$ with values in non-equivariant homology theories. 
Since we do not have a coarse version of {Elmendorf's} theorem we will therefore 
work with the latter picture of  functors from $G\Orb$ to  non-equivariant coarse homology theories.

The natural replacement of the functor $\Top\to \Spc$ in coarse homotopy theory is the functor $\Yo\colon \BC\to \Spc\cX$ (see~{\eqref{344frf}}) which associates to a bornological coarse space its motivic coarse  space.
We shall see that a functor $E\colon G\Orb\to \Fun^{\colim}(\Spc\cX,\bC)$ from $G\Orb$ to non-equivariant coarse homology theories gives rise to a $\bC$-valued equivariant coarse homology theory $$X\mapsto E^{G}(X):=\int^{{G\Orb}}E\circ \tilde Y(X)\ ,$$ (see Corollary \ref{gih42iuhjfierfwfef}).
Here the functor $\tilde Y\colon G\BC\to \Fun(G\Orb^{op},\Spc\cX)$ sends a $G$-bornological coarse space $X$ to the functor on the orbit category which sends $G/H$ to the motive $\Yo(X^{H})$ of the bornological coarse space of $H$-fixed points in $X$. Equivariant coarse homology theories of this form will be {called of}  Bredon-style.

The Abstract Localization Theorem I now implies a Coarse Abstract Localization Theorem~I~\ref{fueiwfhuiwefewf23r23f} asserting that for a family of subgroups $\cF$
the natural morphism $$E^{G}(X^{\cF})\to E^{G}(X)$$
is an equivalence provided $E$ vanishes {on} $\cF$. Using excision we deduce the 
Coarse Abstract Localization Theorem II \ref{rgioewfwefewfewfewf} which says that for a conjugacy class $\gamma$ of $G$ the natural morphism
$$E^{G}(X^{\gamma})\to E^{G}(X)$$
is an equivalence provided $E$ vanishes on ${\cF(\gamma)}$  and the inclusion $X^{\gamma}\to X$
satisfies a natural regularity condition.

Since in coarse homotopy theory we are lacking the analogue of {Elmendorf's} theorem we do not expect that a general  equivariant coarse homology theory is  of  Bredon-style. In Definition \ref{rgfrihgioffwfwfwefewf} we associate to every equivariant coarse homology theory a Bredon-style approximation $$E^{\Bredon}\to E\ .$$ To this end we consider the functor
$$\underline{E}:G\Orb\to \Fun^{\colim}(\Spc\cX,\bC)$$ sending $S$ to
the functor
$X\mapsto E (S_{min,max}\otimes X)$, and we  set $$E^{\Bredon}:=\underline{E}^{G}\circ \tilde Y\ .$$
As an immediate corollary of the  Coarse Abstract Localization Theorem II \ref{rgioewfwefewfewfewf}  we deduce in Corollary \ref{gerhgiu34hiu34huferererg}
that, for a conjugacy class $\gamma$ of $G${, the  morphism}
$$E^{\Bredon}(X^{\gamma})\to E^{\Bredon}(X)$$ induced by the inclusion $X^{\gamma}\to X$ is an equivalence provided the functor
$\underline{E}$ vanishes on~$\cF(\gamma)$.
In order to be able to deduce   a localization theorem for $E$ itself, in Section \ref{f42oifj2o3f32f32e} we study conditions on $E$ and $X$ implying that {the comparison map} $E^{\Bredon}(X)\to E(X)$ is an equivalence. {We also} provide some examples which show that this comparison map is not always an equivalence.
 Since the formulations are technical we refrain from stating these results here in the introduction.

In the Sections \ref{greiuhiufhiwefwefewfewf2r323r} and \ref{greiuhiufhiwefwefewfewf2r323r1} we show how the Abstract Coarse Localization Theorems can be applied in the case of topological (or algebraic) equivariant coarse $K$-homology theories $K\cX^{G}$ (or $K\bA\cX^{G}$ for some additive category $\bA$ enriched in $\C$-vector spaces) and finite groups $G$.  We first observe that the {values of these functors}  can be localized at an ideal $(\gamma)$ of $R(G)$ associated to a    conjugacy class $(\gamma)$, and
then show that for  every object $T$ in $G\Sp\cX$  the {$T$-twisted} functor
$\underline{K\cX^{G}_{T,(\gamma)}}$ (or $\underline{K\bA\cX^{G}_{T,(\gamma)}}$, respectively) vanishes on $F(\gamma)$.
Under additional conditions on $X$ and $X^{\gamma}$ (ensuring that the approximation by the Bredon-style versions {works}) we then deduce equivalences
$$K\cX^{G}_{T,(\gamma)}(X^{\gamma})\stackrel{\simeq}{\to} K\cX^{G}_{T,(\gamma)}(X)$$ and
$$K\bA\cX^{G}_{T,(\gamma)}(X^{\gamma})\stackrel{\simeq}{\to} K\bA\cX^{G}_{T,(\gamma)}(X)\ .$$ 

Since a Bredon-style equivariant coarse homology theory is determined by a functor defined on the orbit category it is natural to consider assembly maps.
 An  assembly map  approximates a homology theory by the theory obtained from the restriction of the initial functor to the subcategory of the orbit category of transitive $G$-sets whose stabilizers belong to a given family of subgroups. In equivariant homotopy theory these assembly maps are the main objects of the isomorphism conjectures like the Farell-Jones or Baum-Connes conjectures. An equivariant coarse homology theory naturally gives rise to an equivariant homology theory in the sense of equivariant homotopy theory. In  
Section \ref{greoijio3gergrgregergerg} we show that the fact that the assembly map for this derived equivariant homology theory is an equivalence implies that the assembly map for the equivariant coarse homology theory is an equivalence, at least after restriction to a certain subcategory of $G\BC$.

 Acknowledgements:
 {\em U.B.~was supported by the SFB  1085 (Higher Invariants) and L.C.~was supported by the GK 1692 (Curvature, Cycles, and Cohomology). L.C.~also acknowledges partial support from INdAM-GNSAGA.}

\section{Localization theorems in equivariant topology}

The aim of this section is to provide abstract localization theorems, and to derive from them the classical localization result of Segal.

\subsection{The Abstract Localization Theorem for presheaves on the orbit category}

 Let $G$ be a group and $G\Orb$ be the category of transitive $G$-sets and equivariant maps.
 Assume that $\bC$ is a cocomplete $\infty$-category.
 By the universal property of presheaves pull-back along  the Yoneda embedding 
$i\colon G\Orb\to \PSh(G\Orb)$ induces an equivalence \begin{equation}\label{3fkjkjnjkdnkjwdwdqwdqwd}
i^{*}\colon \Fun^{\colim}(\PSh(G\Orb),\bC) \stackrel{\simeq}{\to}  \Fun(G\Orb,\bC)
\end{equation}
of $\infty$-categories \cite[Thm.~5.1.5.6]{htt}.

We consider  a functor 
  $E\colon G\Orb\to \bC$.
  
  \begin{ddd}\label{uiweogwerwwefwef} By   $$E^{G}\colon \PSh(G\Orb)\to \bC$$   we denote the essentially uniquely determined colimit-preserving functor   with  $i^{*}E^{G}\simeq E$. \end{ddd}
  
In the following, we use the language of ends and coends, see e.g.~\cite{MR3690268}, \cite{Glasman2016}.
{Denote by $\Spc$ the $\infty$-category of spaces. }
The functor $E^{G}$ is given by the  left Kan extension of $E$  along the Yoneda embedding and can  
	 be expressed in terms of a coend  \begin{equation}\label{verlkjeolrvevevevv}
		E^{G}(X)\simeq \int^{G\Orb} E\otimes X\ ,
	\end{equation} where $\otimes\colon\bC\otimes \Spc\to \bC $ is the tensor structure of $\bC$ over spaces and we consider $E\otimes X$ as a functor from
	$G\Orb\times G\Orb^{op}$ to $\bC$.

%
%
 Let $\cF$ be a 
 conjugation invariant set of subgroups of $G$. It determines the full subcategory $G_{\cF}\Orb$ of $G\Orb$ of transitive $G$-sets with stabilizers in $\cF$. The inclusion of
$G_{\cF}\Orb$ into $G\Orb$ induces   adjunctions \begin{equation}\label{fwrelkoelfwefwef}
\Ind_{\cF}:\Fun(G_{\cF}\Orb^{op},\bD)\leftrightarrows \Fun(G\Orb^{op},\bD):\Res_{\cF}
\end{equation}
(assuming that  $\bD$ is cocomplete)
and $$\Res_{\cF}:\Fun(G \Orb^{op},\bD)\leftrightarrows \Fun(G_{\cF}\Orb^{op},\bD):\Coind_{\cF}$$
(assuming that $\bD$ is complete). The left adjoint $\Ind_{\cF}$ (resp.~right-adjoint $\Coind_{\cF}$)  is a left- (resp.~right-) Kan extension 
functor.
We have similar adjunctions for covariant functors which will be denoted by the same {symbols.}

Let  $\cF$ be a conjugation invariant set  of subgroups  of $G$.
\begin{ddd}We let $\cF^{\perp}:=\All\setminus \cF$ denote the complement of $\cF$ in the set $\All$ of all subgroups of $G$.
\end{ddd}

Let  $X$ be in $ \Fun(G \Orb^{op},\bD)$ with  $\bD$ cocomplete.
 \begin{ddd}\label{rgijofewrgergergerg}
 We define \begin{equation}\label{g45goijkfml3rf3f3f}
X^{\cF}:=\Ind_{\cF^{\perp}}\Res_{\cF^{\perp}} X
\end{equation}  and let $X^{\cF}\to X$ denote the map induced by the counit of the adjunction \eqref{fwrelkoelfwefwef}.
 \end{ddd}

Let $\bC$ be a  pointed {$\infty$-category} and 
 consider a   functor $E\colon G\Orb\to \bC$.
\begin{ddd}\label{fkjkjafsljashfgfasljh}
We say that $E$ vanishes on   $\cF$ if  $E(S)\simeq 0$ for all $S$ in $G_{\cF}\Orb$.
\end{ddd}

 Let $\cF$ be a 
 conjugation invariant set of subgroups of $G$.
 
\begin{ddd} $\cF$   is called a family of subgroups if it is invariant under taking subgroups. \end{ddd}

We can now formulate the abstract localization theorem.
 Let  $E\colon G\Orb\to \bC$ be a functor with target a stable   cocomplete $\infty$-category $\bC$, $\cF$ be a  conjugation invariant set   of subgroups of $G$, and
 $X$ be in $\PSh(G\Orb)$.
 \begin{theorem}[Abstract Localization Theorem I]\label{wregfio324r3t3t4t} Assume:
  \begin{enumerate}\item
 $E$ vanishes on $\cF$.
 \item $\cF$ is a family of subgroups of $G$.
 \end{enumerate}
Then, the morphism $X^{\cF}\to X$ induces an equivalence
$E^{G}(X^{\cF})\to E^{G}(X)$.
\end{theorem}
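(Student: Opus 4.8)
The plan is to reduce to representable presheaves using that everything in sight preserves colimits, and then to split into two cases according to whether the stabilizer of the representing orbit lies in $\cF$.

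\emph{Reduction to representables.} Write $j\colon G_{\cF^{\perp}}\Orb\hookrightarrow G\Orb$ for the inclusion, so that $\Res_{\cF^{\perp}}=j^{*}$ and $\Ind_{\cF^{\perp}}=j_{!}$, the left Kan extension along $j$. The assignment $X\mapsto X^{\cF}=j_{!}j^{*}X$ preserves colimits: $j^{*}$ does because colimits of presheaves are computed pointwise, and $j_{!}$ does because it is a left adjoint by \eqref{fwrelkoelfwefwef}; and $E^{G}$ preserves colimits by Definition \ref{uiweogwerwwefwef}. The morphism $X^{\cF}\to X$ of Definition \ref{rgijofewrgergergerg} is the counit of $(j_{!}\dashv j^{*})$, hence natural in $X$, so after applying $E^{G}$ we obtain a natural transformation $\alpha\colon E^{G}((-)^{\cF})\Rightarrow E^{G}(-)$ between colimit-preserving functors $\PSh(G\Orb)\to\bC$. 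Since $\PSh(G\Orb)$ is generated under colimits by the representables $i(S)$, $S$ in $G\Orb$, and since the class of $X$ with $\alpha_{X}$ an equivalence is closed under colimits (both source and target of $\alpha$ preserve colimits, and colimits preserve equivalences), it suffices to show $\alpha_{i(S)}$ is an equivalence for every $S$ in $G\Orb$. Writing $S\simeq G/H$, this amounts to $E^{G}(i(G/H)^{\cF})\stackrel{\simeq}{\to}E(G/H)$.

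\emph{Case $H\in\cF^{\perp}$.} Then $S$ lies in $G_{\cF^{\perp}}\Orb$, so full faithfulness of $j$ gives $j^{*}i(S)\simeq i'(S)$, the presheaf on $G_{\cF^{\perp}}\Orb$ represented by $S$; moreover $j_{!}i'(S)\simeq i(S)$, since the left Kan extension of a representable along a fully faithful functor is again representable (i.e.\ $j_{!}\circ i'\simeq i\circ j$). Thus $i(S)$ lies in the essential image of the fully faithful functor $j_{!}$, on which the counit $j_{!}j^{*}\to\mathrm{id}$ is an equivalence (triangle identity together with invertibility of the unit of $(j_{!}\dashv j^{*})$). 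Hence $i(S)^{\cF}\to i(S)$ is already an equivalence of presheaves, and a fortiori $\alpha_{i(S)}$ is an equivalence. (Neither hypothesis of the theorem is used here.)

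\emph{Case $H\in\cF$.} Here both hypotheses enter. On one hand $E^{G}(i(S))\simeq E(S)\simeq 0$ because $E$ vanishes on $\cF$ (Definition \ref{fkjkjafsljashfgfasljh}). On the other hand, for any $T=G/K$ in $G_{\cF^{\perp}}\Orb$ we have $K\notin\cF$; a $G$-equivariant map $G/K\to G/H$ exists only when $K$ is conjugate to a subgroup of $H$, but then — $\cF$ being a family and $H\in\cF$ — we would get $K\in\cF$, a contradiction. Hence $\Map_{G\Orb}(j(T),S)=\emptyset$ for all $T$, so $j^{*}i(S)$ is the initial presheaf on $G_{\cF^{\perp}}\Orb$; therefore $i(S)^{\cF}=j_{!}j^{*}i(S)$ is the initial object of $\PSh(G\Orb)$, and since $E^{G}$ preserves colimits (in particular initial objects) $E^{G}(i(S)^{\cF})\simeq 0$. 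So $\alpha_{i(S)}\colon 0\to 0$ is an equivalence, completing the proof.

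\emph{Expected obstacle.} The only point requiring care is the reduction step: checking that $X\mapsto X^{\cF}$ is colimit-preserving and that the equivalence-locus of $\alpha$ is colimit-closed, together with the standard (but easily misquoted) fact that a fully faithful $j$ induces a fully faithful $j_{!}$ with $j_{!}\circ i'\simeq i\circ j$ and counit an equivalence on its essential image. Once these are in place the two cases are immediate. One could instead route the computation through the coend formula \eqref{verlkjeolrvevevevv}, identifying $E^{G}(i(S)^{\cF})$ with the colimit of $E$ over the comma category $j\downarrow S$ — which has a terminal object in the first case and is empty in the second — but the argument above avoids even that.
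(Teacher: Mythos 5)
Your proof is correct, and it takes a genuinely different route from the paper's. The paper argues by duality: it picks a test object $C$ in $\bC$, rewrites $\Map_{\bC}(E^{G}(X^{\cF}),C)$ via the coend formula and Lemma \ref{rgioergergrege1} (relating $\Coind$ to mapping objects) so as to move the adjunction $(\Ind_{\cF^{\perp}},\Res_{\cF^{\perp}})$ from the presheaf side onto the functor $E$, and then applies Lemma \ref{regoiergrgg1}, which shows the counit $\Ind_{\cF^{\perp}}\Res_{\cF^{\perp}}E\to E$ is an equivalence. You instead work entirely on the presheaf side: since $X\mapsto X^{\cF}$ and $E^{G}$ are colimit-preserving and representables generate $\PSh(G\Orb)$ under colimits, you reduce to $X=i(G/H)$ and split on whether $H\in\cF$. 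What is pleasing is that your case analysis (in particular the observation that $G/K\to G/H$ forces $K$ into $\cF$ when $H\in\cF$ and $\cF$ is a family, so the relevant comma category is empty) is precisely the content of the paper's Lemma \ref{regoiergrgg1}, just deployed directly on the representables rather than on $E$. Your approach buys a shorter, more elementary argument that bypasses Lemma \ref{rgioergergrege1} and the chain of mapping-space manipulations; the paper's approach isolates the reusable statement that $E_{\cF^{\perp}}\to E$ is an equivalence (cf.\ Example \ref{gio2r23r23r32r2}), which it uses again elsewhere. Both are valid; no gaps in yours.
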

\begin{proof}
{We first show that} 
  the counit
$\Ind_{\cF^{\perp}}\Res_{\cF^{\perp}}E\to E$ of  the 
adjunction \eqref{fwrelkoelfwefwef} is an equivalence.
Since $G_{\cF^{\perp}}\Orb\to G\Orb$ is   fully faithful, the unit of the adjunction \eqref{fwrelkoelfwefwef} is an  equivalence $\id\stackrel{\simeq}{\to}\Res_{\cF^{\perp}}\Ind_{\cF^{\perp}} $. We conclude that if   
 $S$ belongs to $G_{\cF^{\perp}}\Orb$, then $(\Ind_{\cF^{\perp}}\Res_{\cF^{\perp}}E)(S)\to E(S)$ is an equivalence.
 Assume now that $S$ is in $G_{\cF}\Orb$.   Then $E(S)\simeq 0$ by {assumption}.
 By the object-wise formula for the left Kan extension functor $\Ind_{\cF^{\perp} }$ {we} have an equivalence  
 $$(\Ind_{\cF^{\perp}}\Res_{\cF^{\perp}}E)(S)\simeq \colim_{(T\to S)\in G_{\cF^{\perp}}\Orb  /S} E(T)\ .$$
 We now note\footnote{It is here where we use the special structure of the orbit category and the family. The remaining arguments would work for $G_{\cF}\Orb\subseteq G\Orb$ replaced by any pair of an $\infty$-category and a full subcategory.} that the existence of a morphism $T\to S$ implies that $T\in G_{\cF}\Orb$. Hence the index category of this colimit is empty and $(\Ind_{\cF^{\perp}}\Res_{\cF^{\perp}}E)(S)\simeq 0$.

In order to finish the proof of {the theorem} 
we use
the equivalence of  transformations \begin{equation}\label{ewrgergrefwf} E^{G}(\Ind_{\cF^{\perp}}\Res_{\cF^{\perp}}\to \id)\simeq  (\Ind_{\cF^{\perp}}\Res_{\cF^{\perp}}E\to E)^{G}\ .\qedhere 
\end{equation}

\end{proof}

\begin{ex}\label{gio2r23r23r32r2}
Let $\cF$ be a family of subgroups of $G$ and  $E\colon G\Orb\to \bC$ be a functor. Then we can form a new functor
$$E_{\cF^{\perp}}:=\Ind_{\cF^{\perp}}\Res_{\cF^{\perp}}E \ .$$
It comes with a natural morphism $E_{\cF^{\perp}}\to E$ and is the best approximation of $E$ by a functor which vanishes on
 $\cF$. \heB
\end{ex}

\subsection{The Abstract  Localization Theorem for topological $G$-spaces}
\label{hdagghagghag}

We now formulate a second version of the Abstract Localization Theorem for families of subgroups determined by a conjugacy class  $\gamma$ of $G$ and $G$-topological spaces.

\begin{ddd}\label{ioujiowrgrgrergergr}We define the set of subgroups of $G$ $$\cF(\gamma):=\{H\:\mbox{a subgroup of $G$}\:|\: H\cap \gamma=\emptyset\}\ .$$   \end{ddd}
One easily checks:

\begin{lem} \label{rgiorgergergg} $\cF(\gamma)$ is a family of subgroups.\end{lem}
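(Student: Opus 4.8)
The plan is to verify directly that $\cF(\gamma)$ is closed under conjugation and under passage to subgroups; these are the two conditions in the definition of a family of subgroups, and both follow immediately from the definition of $\cF(\gamma)$ as the set of subgroups meeting the conjugacy class $\gamma$ trivially.

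First I would check conjugation invariance. Let $H$ be in $\cF(\gamma)$, so $H\cap\gamma=\emptyset$, and let $g\in G$. Since $\gamma$ is a conjugacy class we have $g\gamma g^{-1}=\gamma$, hence $(gHg^{-1})\cap\gamma = g(H\cap g^{-1}\gamma g)g^{-1} = g(H\cap\gamma)g^{-1} = \emptyset$, so $gHg^{-1}\in\cF(\gamma)$. Thus $\cF(\gamma)$ is conjugation invariant.

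Next I would check closure under subgroups. If $H\in\cF(\gamma)$ and $K\leq H$, then $K\cap\gamma\subseteq H\cap\gamma=\emptyset$, so $K\in\cF(\gamma)$. Therefore $\cF(\gamma)$ is invariant under taking subgroups, and combined with conjugation invariance this shows that $\cF(\gamma)$ is a family of subgroups in the sense defined above.

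There is no real obstacle here: the statement is a one-line unwinding of the definitions, and the only thing to be slightly careful about is using that $\gamma$ is genuinely a conjugacy class (so that it is stable under conjugation) in the first step. I would present this as a short paragraph of proof rather than a displayed computation.
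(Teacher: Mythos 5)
Your proof is correct and is exactly the routine verification the paper has in mind: the paper itself gives no proof, merely prefacing the lemma with ``One easily checks,'' and the two checks you carry out (conjugation invariance using that $\gamma$ is a conjugacy class, and subgroup closure by monotonicity of intersection) are the intended unwinding of the definitions.
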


Let $\ell\colon \Top\to \Spc$ be the functor which sends a topological space to the associated space. We consider the functor
\begin{equation}\label{v4toi4hgkjnetkvevv}
\tilde Y\colon G\Top\to \PSh(G\Orb)\ , \quad  X\mapsto (S\mapsto \ell(\Map_{G\Top}(S_{discr},X)))\ ,
\end{equation} 
where $\Map_{G\Top}$ denotes the  topological space of $G$-invariant maps with the compact-open topology.

\begin{rem}\label{ertefsfa}
The functor $\ell$  is equivalent to the localization functor $\Top\to \Top[W^{-1}]$, where $W$ is the class of weak homotopy equivalences.

The weak equivalences $W_{G}$ in equivariant homotopy theory are  the morphisms in $G\Top$ which induce weak homotopy equivalences on the fixed points spaces for all subgroups~$H$ of~$G$. These are exactly the morphisms which are sent to equivalences by $\tilde Y$. By Elmendorf's theorem,  the functor $\tilde Y$ induces an equivalence
  \begin{equation}\label{rvelkm3l4kgferveve}
G\Top[W_{G}^{-1}]\stackrel{\simeq}{\to} \PSh(G\Orb)
\end{equation}  and is therefore equivalent to the localization functor
$G\Top\to G\Top[W_{G}^{-1}]$.  See e.g.~\cite[Thm.~1.3.8]{blumberg} for this {formulation} of the classical result of  {Elmendorf}~\cite{MR690052}. \hrB
  \end{rem}
 
Let $X$ be a  
 $G$-topological space {and $\gamma$ be a conjugacy class of $G$}.
 
 \begin{ddd}We  define the $G$-topological space  $X^{\gamma}$ of    $\gamma$-fixed points to be the  subset
$  \bigcup_{g\in \gamma} X^{g}$  of $X$ with the induced topological structure and $G$-action.
\end{ddd}
%
 
%
%

\begin{lem}\label{3rgorg34g34g1}
We have
$\tilde Y(X)^{\cF(\gamma)}\simeq \emptyset$ if and only if $X^{\gamma}=\emptyset$.
\end{lem}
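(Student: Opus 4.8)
The plan is to unwind both sides of the equivalence in terms of the defining formula $X^{\cF} = \Ind_{\cF^{\perp}}\Res_{\cF^{\perp}} X$ applied to $X = \tilde Y(X)$, and to identify when the resulting presheaf is the initial object $\emptyset$ of $\PSh(G\Orb)$. Recall that $\emptyset$ in $\PSh(G\Orb)$ is the constant functor on the empty space, so the statement $\tilde Y(X)^{\cF(\gamma)}\simeq\emptyset$ amounts to saying that the presheaf $S\mapsto (\tilde Y(X)^{\cF(\gamma)})(S)$ takes empty values on every orbit $S$. Since $\Ind_{\cF^{\perp}}$ is a left Kan extension along the fully faithful inclusion $G_{\cF^{\perp}}\Orb\hookrightarrow G\Orb$, its value on $G/H$ is a colimit over the comma category $(G_{\cF(\gamma)^{\perp}}\Orb)_{/(G/H)}$ of the restricted presheaf. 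A colimit in $\Spc$ is empty iff the indexing diagram is pointwise empty (equivalently, in each connected component the constituent spaces are empty, and here the relevant subtlety is that an empty-valued diagram has empty colimit regardless of shape). So the first reduction is: $\tilde Y(X)^{\cF(\gamma)}\simeq\emptyset$ iff for every subgroup $H\in\cF(\gamma)^{\perp}$ the space $\tilde Y(X)(G/H)=\ell(\Map_{G\Top}((G/H)_{discr},X))\simeq\ell(X^{H})$ is empty, i.e. iff $X^{H}=\emptyset$ for all $H$ with $H\cap\gamma\neq\emptyset$.

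\textbf{Key steps.} First I would note that $\ell(X^{H})\simeq\emptyset$ in $\Spc$ iff $X^{H}=\emptyset$ as a topological space (the associated space of a nonempty space is nonempty, since it has a point). Second, $H\cap\gamma\neq\emptyset$ means $H$ contains some $g\in\gamma$; then $X^{H}\subseteq X^{g}\subseteq X^{\gamma}$, so if $X^{\gamma}=\emptyset$ then $X^{H}=\emptyset$ for all such $H$, giving $\tilde Y(X)^{\cF(\gamma)}\simeq\emptyset$. Conversely, if $X^{\gamma}\neq\emptyset$, pick $g\in\gamma$ with $X^{g}\neq\emptyset$ and take $H:=\langle g\rangle$ (or any subgroup containing $g$); then $H\notin\cF(\gamma)$, i.e. $H\in\cF(\gamma)^{\perp}$, and $X^{H}\supseteq X^{\langle g\rangle}=X^{g}\neq\emptyset$, so $\tilde Y(X)(G/H)\not\simeq\emptyset$; since $G/H$ itself lies in $G_{\cF(\gamma)^{\perp}}\Orb$ it maps to $(\tilde Y(X)^{\cF(\gamma)})(G/H)$ via the identity as an object of the comma category, so that value is nonempty and $\tilde Y(X)^{\cF(\gamma)}\not\simeq\emptyset$.

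\textbf{Main obstacle.} The only genuinely nontrivial point is the clean identification $(\Ind_{\cF^{\perp}}\Res_{\cF^{\perp}}\tilde Y(X))(G/H)$ is empty for $H\in\cF(\gamma)^{\perp}$ under the hypothesis $X^{\gamma}=\emptyset$: one must argue that the pointwise colimit formula for the left Kan extension yields an empty space whenever every term $\tilde Y(X)(T)$ indexed by $T\in G_{\cF(\gamma)^{\perp}}\Orb$ mapping to $G/H$ is empty. This follows because a colimit of a diagram in $\Spc$ all of whose values are initial is initial (the colimit functor preserves the initial object when computed over any base, as it is a left adjoint on each slice; concretely the colimit is a quotient of a coproduct of empty spaces). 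For the converse direction the subtlety is dual: one needs that $\tilde Y(X)^{\cF(\gamma)}$ being $\emptyset$ forces all its values, including on $G/H$ for $H\in\cF(\gamma)^{\perp}$, to be empty — and here the counit map $\tilde Y(X)^{\cF(\gamma)}\to\tilde Y(X)$ restricted to $G_{\cF(\gamma)^{\perp}}\Orb$ is an equivalence (the unit $\id\to\Res_{\cF^{\perp}}\Ind_{\cF^{\perp}}$ is an equivalence by full faithfulness, exactly as used in the proof of Lemma \ref{regoiergrgg1}), so on such $G/H$ the value is $\ell(X^{H})$, which must then be empty. Assembling these two implications yields the claimed equivalence.
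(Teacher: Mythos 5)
Your proof is correct and follows essentially the same route as the paper's: you reduce to the pointwise statement that $X^{H}=\emptyset$ for all $H\in\cF(\gamma)^{\perp}$ (using the counit/unit equivalences for the fully faithful inclusion and the fact that a pointwise-empty Kan extension is empty), and then relate this to $X^{\gamma}=\emptyset$ via the inclusions $X^{H}\subseteq X^{g}\subseteq X^{\gamma}$ and the case $H=\langle g\rangle$. You spell out a couple of steps the paper leaves implicit (that $\ell$ detects the empty space, and why the counit is an equivalence on orbits in $\cF(\gamma)^{\perp}$), but these are details, not a different argument.
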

\begin{proof}
Assume that $X^{\gamma}=\emptyset$. If $H$ is in $\cF(\gamma)^{\perp}$, then there exists an element $h$ in $H\cap \gamma$. But then we have the inclusions
$X^{H}\subseteq  X^{h}\subseteq X^{\gamma}$, i.e., $ X^{H}=\emptyset$. This implies that $\tilde Y(X)(G/H)\simeq \emptyset$. 

We conclude that
$\Res_{\cF(\gamma)^{\perp}}\tilde Y(X)\simeq \emptyset$ and hence $X^{\cF(\gamma)}\simeq \emptyset$.

Assume now that $\tilde Y(X)^{\cF(\gamma)}\simeq \emptyset$.
Then necessarily $X^{H}=\emptyset$ for all $H$ in $\cF(\gamma)^{\perp}$.
For $h$ in $\gamma$ we have $\langle h\rangle \in \cF(\gamma)^{\perp}$ and hence
$X^{h}=X^{\langle h\rangle}=\emptyset$.

We conclude that $X^{\gamma}=\emptyset$.\end{proof}

  Recall that $\bC$ denotes a   cocomplete   
 stable $\infty$-category, and
   let $E\colon G\Orb\to \bC$ be a functor. For a $G$-topological space $X$ we
  will   use the abbreviation $E^{G}(X):=E^{G}(\tilde Y(X))$.  

\begin{prop}\label{foief223ff2f}
Assume: \begin{enumerate} \item $X^{\gamma}=\emptyset$
\item $E$ vanishes on $\cF(\gamma)$.
\end{enumerate}
Then $E^{G}(X) \simeq 0$.
\end{prop}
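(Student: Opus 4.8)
The plan is to read this off as an immediate combination of Lemma \ref{3rgorg34g34g1} with the Abstract Localization Theorem I (Theorem \ref{wregfio324r3t3t4t}). First I would observe that $\cF(\gamma)$ is a family of subgroups by Lemma \ref{rgiorgergergg}, so hypothesis (2) of the proposition, namely that $E$ vanishes on $\cF(\gamma)$, together with this fact, puts us exactly in the situation of Theorem \ref{wregfio324r3t3t4t} with $\cF=\cF(\gamma)$ and the presheaf $X$ there taken to be $\tilde Y(X)$ in $\PSh(G\Orb)$. Hence the canonical morphism $\tilde Y(X)^{\cF(\gamma)}\to \tilde Y(X)$ induces an equivalence
$$E^{G}(\tilde Y(X)^{\cF(\gamma)})\stackrel{\simeq}{\to} E^{G}(\tilde Y(X))\ .$$

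Next I would use hypothesis (1), that $X^{\gamma}=\emptyset$. By Lemma \ref{3rgorg34g34g1} this is equivalent to $\tilde Y(X)^{\cF(\gamma)}\simeq \emptyset$, the initial object of $\PSh(G\Orb)$. Combining with the equivalence just displayed, we get $E^{G}(\tilde Y(X))\simeq E^{G}(\emptyset)$.

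Finally I would note that $E^{G}$ is colimit-preserving by Definition \ref{uiweogwerwwefwef}, so it sends the initial object of $\PSh(G\Orb)$ — which is the colimit of the empty diagram — to the initial object of $\bC$; since $\bC$ is stable, the initial object is the zero object, so $E^{G}(\emptyset)\simeq 0$. Therefore $E^{G}(\tilde Y(X))\simeq 0$, as claimed. There is no real obstacle here: the statement is a formal corollary, and the only point worth spelling out is the elementary observation that a colimit-preserving functor into a stable $\infty$-category kills the empty presheaf.
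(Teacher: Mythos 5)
Your argument is correct and follows the paper's proof step for step: check $\cF(\gamma)$ is a family via Lemma \ref{rgiorgergergg}, identify $\tilde Y(X)^{\cF(\gamma)}\simeq\emptyset$ via Lemma \ref{3rgorg34g34g1}, and invoke Theorem \ref{wregfio324r3t3t4t}. You spell out the final step ($E^G$ colimit-preserving kills the initial object, which is zero in a stable $\infty$-category) more explicitly than the paper, but this is the same argument.
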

\begin{proof}
By Lemma \ref{rgiorgergergg} we know that $\cF(\gamma)$ is a family of subgroups of $G$.
Since $X^{\gamma}=\emptyset$ we have  
$\tilde Y(X)^{\cF(\gamma)}\simeq \emptyset$ by Lemma \ref{3rgorg34g34g1}. 
By  Theorem \ref{wregfio324r3t3t4t}
we have the  first equivalence in
$$ E^{G}( X )=E^{G}(\tilde Y(X)) \simeq E^{G}( \tilde Y(X)^{\cF(\gamma)}) \simeq 0\ . $$
 \end{proof}


Let  $\gamma$ be a conjugacy class in $G$,  let $E\colon G\Orb\to \bC$ be a functor, and  let $X$ be a $G$-topological space. Then Theorem \ref{wregfio324r3t3t4t}
has the following consequence.

 \begin{theorem}[Abstract Localization Theorem II]\label{wriowwgwegewgewgewgw}
Assume:\begin{enumerate}\item\label{gjkl} $X^{\gamma}$ is closed in $X$ and admits a 
open invariant neighbourhood $N$ such that $X^{\gamma}\to N$ is an equivariant weak equivalence (see Remark \ref{ertefsfa}). 
\item $E$ vanishes on $\cF(\gamma)$.
\end{enumerate}
 Then the inclusion
$X^{\gamma}\to X$ induces an equivalence
$$E^{G}( X^{\gamma} ) \to E^{G}( X) \ .$$
\end{theorem}
\begin{proof}


The functor $\ell \colon \Top\to \Spc$ sends push-out squares derived from open decompositions into push-out squares. This implies that $\tilde Y$ has the same {property.} 
 We apply this fact to the square 	
$$\xymatrix{N \setminus X^{\gamma}\ar[r]\ar[d]&N \ar[d]\\X\setminus X^{\gamma}\ar[r]&X}$$
for $N$  an open invariant neighbourhood of $X^{\gamma}$  such that $X^{\gamma}\to N$ is an equivariant weak equivalence.  The functor $\tilde Y$ sends it to a push-out square in $\PSh(G\Orb)$. The functor $E^{G}(-) $ preserves all colimits, hence push-out squares in particular.

Since
$(X\setminus X^{\gamma})^{\gamma}=\emptyset$ and
$(N \setminus X^{\gamma})^{\gamma}=\emptyset$, by 
Proposition \ref{foief223ff2f} the functor $E^{G}(-) $ sends the 
left part of the square to zero.    Hence  the morphism $E^{G}( N  ) \to E^{G}(X) $ is an equivalence. Since    $E^{G}(X^{\gamma})\to E^{G}(N)$ is an equivalence  by  Assumption \ref{wriowwgwegewgewgewgw}.\ref{gjkl},
 this implies that $E^{G}(X^{\gamma})\to E^{G}( X)$ is an equivalence.
%
 \end{proof}

\subsection{The  Segal localization theorem revisited}\label{giuhiuhuir32r23r232}

In this section we show how the classical Atiyah-Segal localization theorem for equivariant $K$-homology can be deduced from the {Abstract} Localization Theorem II \ref{wriowwgwegewgewgewgw}.

In this section we assume that $G$ is finite. We consider 
 {equivariant} topological  $K$- homology  as an 
 equivariant  homology theory with values in the category  $\Mod(R_{G})$ determined by   the commutative ring-spectrum  $R_{G}:=KU_{G}(*)$. 
   It is thus determined by a functor
 $$KU_{G}\colon G\Orb\to \Mod(R_{G})\ .$$ 
   It is known that  the ring $\pi_{0}R_{G}$ is canonically isomorphic to the representation ring $R(G)$ of~$G$.

 \begin{rem}\label{erthopetrghertge}
 A quick construction of the functor $KU_{G}$ can be given using the   spectrum-valued  equivariant $\mathrm{KK}$-theory from \cite{KKG}. Let $G\nCalg$  denote the symmetric monoidal category of possibly non-unital 
 $C^{*}$-algebras with $G$-action and the symmetric monoidal structure given by maximal tensor product. 
 In \cite{KKG}   a lax symmetric monoidal functor  $\kkG\colon G\nCalg\to \KKG$ to a presentably symmetric monoidal  stable $\infty$-category is constructed which is essentially
 uniquely characterized by universal properties which will not  be repeated here. 
 We write $\KKG(A,B):=\Map(\kkG(A),\kkG(B))$ for the mapping spectrum. Since $\kkG(\C)$ is the tensor unit of $\KKG$ we can define the
 commutative ring spectrum $R_{G}:=\KKG(\C,\C)$. {For every $A$ in $G\nCalg$ the composition $\KKG(A,\C)\times \KKG(\C,\C)\to \KKG(A,\C)$ turns $\KKG(A,\C)$ into a $R_{G}$-module.
 The functor $\KKG(-,\C)$ therefore}  naturally refines  to a functor $\KKG(-,\C)\colon {(\nCalg)^{op}}\to \Mod(R_{G})$, and we
%
%
%
%
  can define the functor
$$KU_{G}\colon G\Orb\to \Mod(R_{G})\ , \quad S\mapsto \KKG(C(S),\C)\ ,$$
 where $C(S)$ denotes the $G$-$C^{*}$-algebra of $\C$-valued functions on $S$. The $G$-action on $C(S)$ and  the action of morphisms in $G\Orb$ {are} implemented via pull-back of functions. 
 
For a subgroup $H$ of $G$ we  have an isomorphism $C(G/H)\simeq \Ind_{H}^{G}(\C)$, and hence, by~\cite[Thm.~1.23.1]{KKG}, an equivalence
$\KKG(C(G/H),\C)\simeq \KKH(\C,\C)=R_{H}$.  
This turns~$R_{H}$ into an $R_{G}$ module. On the level of $\pi_{0}$ this module structure is given by
the restriction homomorphisms $R(G)\to R(H)$. 
  \hrB
  \end{rem}


  Note that 
   the equivariant $K$-homology of a $G$-topological space $X$   in our notation (see Definition \ref{uiweogwerwwefwef} and \eqref{v4toi4hgkjnetkvevv})  can be expressed as
  $KU_{G}^{G}( X)$.
     \footnote{Note that the upper index $G$ here refers to Definition \ref{uiweogwerwwefwef}.}

   It is known that $\pi_{0}R_{G}$ is isomorphic to the representation ring $R(G)$ of $G$.
   The trace  provides a ring  homomorphism
   $\Tr\colon R(G)\to C(G)$ which   takes values in the subalgebra  of conjugation invariant functions in $G$.
   
  Let $\gamma$ be a conjugacy class.
  \begin{ddd}\label{rgiogerg43t34t43t}
  We let   $(\gamma)$  be the ideal in $R(G)$ of elements $\rho$ with $\Tr(\rho)(g)=0$ for all~$ g$ in $\gamma$. \end{ddd}
  Note that $(\gamma)$ is a prime ideal.
  
  \begin{rem}\label{wtkoghpgferfwerg}
 The ring structure of $R_{G}$ induces for  $\alpha$ in $R(G)\cong \pi_{0}(R_{G})$   a multiplication map $\alpha\colon R_{G}\to R_{G}$.
 We have a symmetric {monoidal}  (Bousfield) localization \begin{equation}\label{gergregerergregregreregerg1} (-)_{(\gamma)}:\Mod(R_{G})\to L_{(\gamma)}\Mod(R_{G})\end{equation} generated by the set of morphisms
 $(R_{G}\xrightarrow{\alpha} R_{G})_{\alpha\in R(G)\setminus (\gamma)}$. 
We   {also} have a canonical   functor \begin{equation}\label{gergregerergregregreregerg} \Mod(R_{G,(\gamma)})\stackrel{}{\to}   L_{(\gamma)}\Mod(R_{G})\end{equation}
 which is actually a symmetric monoidal equivalence (see, \emph{e.g.,} the  {text after}  \cite[Prop.~7.2.3.25]{HA}).
The symmetric monoidal morphism $loc_{(\gamma)}\colon \Mod(R_{G})\xrightarrow{loc_{(\gamma)}}  \Mod(R_{G,(\gamma)})$ is the composition of the localization~\eqref{gergregerergregregreregerg1} with the inverse of \eqref{gergregerergregregreregerg}.
The pull-back $l^{*}\colon \Mod(R_{G,(\gamma)})\to  \Mod(R_{G})$ along the   localization   morphism   $l\colon R_{G}\to R_{G,(\gamma)}$  of commutative algebras in $\Sp$
is lax-symmetric monoidal.\hrB
\end{rem}

As explained in Remark \ref{wtkoghpgferfwerg}, we have a   lax symmetric monoidal localization functor \begin{equation}\label{dewdwekjnkj23wewf}
\Mod(R_{G})\xrightarrow{loc_{(\gamma)}}  \Mod(R_{G,(\gamma)})\stackrel{l^{*}}{\to}  \Mod(R_{G})\ , \quad  M\mapsto M_{(\gamma)}\ ,
\end{equation}
   Applying the localization  functor  \eqref{dewdwekjnkj23wewf} object-wise to $KU_{G}$
we get the functor
$$KU_{G,(\gamma)}\colon G\Orb\to  \Mod(R_{G})\ .$$

\begin{lem}[\cite{segalk}]\label{wfiowefwefewfewfw}
The  functor $KU_{G,(\gamma)}$ vanishes on $\cF(\gamma)$.
\end{lem}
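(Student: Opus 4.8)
The plan is to reduce the claim to a statement about $\pi_0$, i.e.\ about the representation ring $R(G)$, and then invoke the classical computation underlying Segal's theorem. Fix a transitive $G$-set $S = G/H$ with $H \in \cF(\gamma)$, so that $H \cap \gamma = \emptyset$; I must show $KU_{G,(\gamma)}(G/H) \simeq 0$ in $\Mod(R_G)$. Since localization at the prime ideal $(\gamma) \subset R(G) = \pi_0 R_G$ is exact and smashing, $KU_{G,(\gamma)}(G/H) \simeq KU_G(G/H) \otimes_{R_G} R_{G,(\gamma)}$, and a module over $R_{G}$ becomes zero after localizing at $(\gamma)$ precisely when each of its homotopy groups, viewed as an $R(G)$-module, is annihilated by some element of $R(G) \setminus (\gamma)$. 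So the task is: find, for each $n$, an element $\rho \in R(G)$ with $\Tr(\rho(g)) \neq 0$ for some (equivalently, by conjugation-invariance and since $\gamma$ is a single conjugacy class, for all) $g \in \gamma$, which annihilates $\pi_n KU_G(G/H)$.

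The key input is the identification of $KU_G(G/H)$ with the $H$-equivariant (non-equivariant in the naive sense, i.e.\ Borel-free genuine) $K$-theory of a point, or more precisely the standard fact that as an $R(G)$-module, $\pi_* KU_G(G/H) \cong \pi_* KU_H(\mathrm{pt})$ with $R(G)$ acting through the restriction homomorphism $\mathrm{res}^G_H : R(G) \to R(H)$. Concretely, by the Wirthm\"uller isomorphism / the induction formula for orbits, $KU_G(G/H) \simeq KU_G \wedge_{?}(G/H_+)$ has homotopy $R(H)$ in even degrees and $0$ in odd degrees, with $R(G)$ acting via $\mathrm{res}^G_H$. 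Thus it suffices to produce $\rho \in R(G) \setminus (\gamma)$ with $\mathrm{res}^G_H(\rho) = 0$ in $R(H)$, or more weakly acting as $0$ on $R(H)$; but since $R(H)$ is unital, acting as zero forces $\mathrm{res}^G_H(\rho) = 0$.

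The construction of such $\rho$ is the heart of the matter and is exactly the classical lemma behind Segal's localization theorem. Pick any $g \in \gamma$. Since $H \cap \gamma = \emptyset$, no conjugate of $g$ lies in $H$; equivalently, the conjugacy class $\gamma$ is disjoint from $H$. Now consider the class function on $G$ that is the indicator of $\gamma$ (times $|G|/|\gamma|$, say, to make it a genuine virtual character with integer inner products); by the theory of characters this is a $\mathbb{Z}$-linear combination of irreducibles, hence an element $\rho_\gamma \in R(G)$, characterized by $\Tr(\rho_\gamma(x)) = 0$ for $x \notin \gamma$ and nonzero on $\gamma$. Then $\rho_\gamma \notin (\gamma)$ by construction, while $\mathrm{res}^G_H(\rho_\gamma)$ is the class function on $H$ obtained by restricting, which vanishes identically since $\gamma \cap H = \emptyset$; hence $\mathrm{res}^G_H(\rho_\gamma) = 0$ in $R(H)$. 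Therefore $\rho_\gamma$ acts as $0$ on all of $\pi_*KU_G(G/H)$, so this module is $(\gamma)$-locally trivial, giving $KU_{G,(\gamma)}(G/H) \simeq 0$.

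I expect the main obstacle to be purely bookkeeping: making precise, in the spectral/$\infty$-categorical setting of $\Mod(R_G)$, the identification of $\pi_* KU_G(G/H)$ as an $R(G)$-module with $R(H)$ under restriction, and checking that "$\rho$ annihilates every homotopy group as an $R(G)$-module" is equivalent to "$M_{(\gamma)} \simeq 0$" — the latter because localization at a (prime) ideal of $\pi_0$ of a commutative ring spectrum can be computed on homotopy groups and $R_{G,(\gamma)}$ is flat over $R_G$ at the level of $\pi_0$ in the relevant sense. The genuinely nontrivial mathematics — the existence of $\rho_\gamma$ — is elementary character theory of the finite group $G$ and is the same fact used in \cite{segalk}; accordingly I would keep that part brief and cite Segal, spelling out only the module-theoretic reduction.
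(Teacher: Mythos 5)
Your reduction to a statement about $\pi_*$ and the framing — find $\eta \in R(G)\setminus(\gamma)$ with $\mathrm{res}^G_H\eta = 0$, so that $\eta$ both annihilates $\pi_*KU_G(G/H)\cong R(H)$ and acts invertibly after localizing at $(\gamma)$ — is exactly the argument in the paper, which also uses the identification $\pi_0 KU_G(G/H)\cong R(H)$ with $R(G)$ acting by restriction.

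However, your explicit construction of $\eta$ is wrong, and this is a genuine gap. You take $\rho_\gamma = \frac{|G|}{|\gamma|}\mathbf{1}_\gamma$ and claim it is a $\mathbb{Z}$-linear combination of irreducible characters, hence in $R(G)$. Its coefficients in the character basis are $\langle \rho_\gamma,\chi\rangle = \overline{\chi(g)}$ for $g\in\gamma$, which are algebraic integers but typically \emph{not} rational integers, so $\rho_\gamma\notin R(G)$ in general. Concretely, for $G=\mathbb{Z}/3$ with generator $g$ and $\gamma=\{g\}$, the function $3\cdot\mathbf{1}_{\{g\}}$ has the expansion $1 + \omega^2\chi + \omega\chi^2$ (where $\chi(g)=\omega$), and no integral combination $a+b\chi+c\chi^2$ has values $(0,3,0)$: the equations force $a=1$ and $b-c = -i\sqrt3$. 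So the scaled indicator of a conjugacy class is not a virtual character, and your $\rho_\gamma$ does not exist as an element of $R(G)$. The existence of a suitable $\eta$ — i.e.\ that $\ker(\mathrm{res}^G_H)\not\subseteq(\gamma)$ whenever $H\cap\gamma=\emptyset$ — is a nontrivial theorem of Segal about $\Spec R(G)$, and the paper simply cites it; you should do the same rather than attempt an elementary construction that doesn't go through. Once you replace your ad hoc $\rho_\gamma$ by Segal's $\eta$, the rest of your argument is correct and matches the paper's.
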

\begin{proof}
Let $H$ be a subgroup in $\cF(\gamma)$.
Then Segal   \cite[{Prop.~3.7}]{segal} {(see also \cite[Sec.~4]{segalk})} has shown that there exists an element $\eta$ in $R(G)$ with
\begin{enumerate}
\item 
$\eta_{|H}=0$ and \item\label{efiowf32r23r23r} $\Tr(\eta)(g)\not=0$ for  all $g$ in $\gamma$. \end{enumerate}
We have an isomorphism
  $\pi_{0}KU_{G}(G/H)\cong R(H)$ as an $R(G)$-module, where the $R(G)$-module structure on $R(H)$ is given by the restriction homomorphism $R(G)\to R(H)$, see Remark \ref{erthopetrghertge}.

By {Assumption}~\ref{efiowf32r23r23r}. multiplication  by $\eta$ 
acts as an isomorphism on the $\Z$-graded $R(G)$-module  $\pi_{*}KU_{G,(\gamma)}(G/H)$.  On the other hand, the multiplication by $\eta$ on $\pi_{*}KU_{G,(\gamma)}(G/H)\simeq R(H)_{(\gamma)}$ is the multiplication by $\eta_{|H}$  and therefore vanishes.

This implies that  $ \pi_{*} KU_{G,(\gamma)}(G/H)\cong 0$ and hence
$KU_{G,(\gamma)}(G/H)\simeq 0$.
\end{proof}

%
%
%

The Abstract Localization Theorem II~\ref{wriowwgwegewgewgewgw} can {now} be applied.

Assume that $G$ is a finite group, $\gamma$ is a conjugacy class in $G$, and that $X$ is a $G$-topological space. 
\begin{theorem}[Atiyah-Segal Localization Theorem]\label{wriowwgwegewgewgewgwas}
If $X^{\gamma}$  is closed in $X$  and admits a 
open invariant neighbourhood $N$ such that $X^{\gamma}\to N$ is an equivariant weak equivalence, then   the inclusion
$X^{\gamma}\to X$ induces an equivalence
$$KU^{G}_{G,(\gamma)}( X^{\gamma} ) \to KU^{G}_{G,(\gamma)}( X) \ .$$
\end{theorem}

In order to compare this with the classical Atiyah-Segal localization theorem \cite[Prop.~4.1]{segalk} note that, since localization commutes with colimits, we have an equivalence
$$KU^{G}_{G,(\gamma)}( X)\simeq 
KU_{G}^{G} ( X)_{(\gamma)}\ ,$$
i.e., we can interchange the order of localizing and forming homology. 

    \section{Fixed points in bornological coarse spaces}

\subsection{Conventions}
 {In what follows we assume that the reader is familiar with the notions of bornological coarse spaces and coarse homology theories {as developed in} \cite[{Part I}]{buen} {and} \cite[{Sec.~2,3 \& 4}]{equicoarse}.}

We let $\BC$ and $G\BC$ be the categories of bornological coarse spaces and $G$-bornological coarse spaces. By 
 \begin{equation}\label{344frf}
\Yo:\BC\to \Spc\cX
\end{equation}
and \begin{equation}\label{34g453oighj43oig43lrf334frf}
\Yo_{G}:G\BC\to G\Spc\cX
\end{equation}
 we denote the functors which send a bornological coarse space or $G$-bornological coarse space to its {motivic coarse}  space \cite[{Def.~3.34}]{buen}, \cite[Sec.~4.1]{equicoarse}. The stable versions of these functors are denoted by 
$$\Yo^{s}:\BC\to \Sp\cX$$ and $$\Yo_{G}^{s}:G\BC\to G\Sp\cX\ .$$

Let $\bC$ be a cocomplete stable $\infty$-category. A $\bC$-valued coarse homology theory
is a functor $$E:\BC\to \bC$$ satisfying the conditions: coarse invariance, excision, vanishing on flasques, and $u$-continuity \cite[Def.~4.{22}]{buen}.  
Similarly, a $\bC$-valued equivariant coarse homology theory is a functor $$E:G\BC\to \bC$$ satisfying the equivariant versions of these properties \cite[Def.~3.10]{equicoarse}.
Occasionally we will consider an additional property called continuity, 
\cite[Def.~5.15]{equicoarse}.

By the universal property of the functors $\Yo$ and $\Yo_{G}$, pull-back along these functors {induces} equivalences between  the $\infty$-categories of $\bC$-valued coarse homology theories or 
$\bC$-valued equivariant coarse homology theories  and the functor categories
$\Fun^{\colim}(\Spc\cX,\bC)$ or $\Fun^{\colim}(G\Spc\cX,\bC)$, respectively (see e.g., \cite[Cor.~{3.37}]{buen}).

If $X$ is in $\BC$ (or $G\BC$) and $E$ belongs to $\Fun^{\colim}(\Spc\cX,\bC)$ (or $\Fun^{\colim}(G\Spc\cX,\bC)$), then we will usually write
$E(X)$ instead of $E(\Yo(X))$ (or $E(\Yo_{G}(X))$).

 By the universal property of the stabilization maps
$\Spc\cX\to \Sp\cX$ and $G\Spc\cX\to G\Sp\cX$ and stability of $\bC$ we have further equivalences,
 \begin{equation}\label{ewflkwenfjkewneiwjffefff2}\Fun^{\colim}(\Spc\cX,\bC)\simeq \Fun^{\colim}(\Sp\cX,\bC)  \end{equation}
and  \begin{equation}\label{ewflkwenfjkewneiwjffefff21} \Fun^{\colim}(G\Spc\cX,\bC)\simeq \Fun^{\colim}(G\Sp\cX,\bC)\ . 
\end{equation}

If $S$ is a $G$-set, then we let $S_{min,max}$ denote the $G$-{bornological} coarse space $S$ with the minimal coarse structure and the maximal bornology {\cite[Ex.~2.5]{equicoarse}}. The group $G$  itself has a  canonical $G$-coarse structure and gives rise to the $G$-bornological coarse space $G_{can,min}$ \cite[Ex.~2.4]{equicoarse}.

 The categories $\BC$ and $G\BC$ have symmetric monoidal structures denoted by $\otimes$ \cite[Ex.~{2.32}]{buen},  \cite[Ex.~2.17]{equicoarse}.
Furthermore, the categories $\Spc\cX$ and $G\Spc\cX$ have symmetric monoidal structures, also denoted by $\otimes$, which are essentially uniquely determined by the requirement that the functors $\Yo$ and $\Yo_{G}$ refine to symmetric monoidal functors. 

For $X \in G\Spc\cX$  and $Y\in G\BC$ we will  write
$Y\otimes X$ instead of $\Yo_{G}(Y)\otimes X$.

 \subsection{Motivic fixed points}

  Let $G$ be a group and $H$ be a subgroup.  By $$W_{G}(H):=N_{G}(H)/H$$ we denote the Weyl group of $H$ in $G$.
  
  If $X$ is a $G$-bornological coarse space, then we can consider the set of $H$-fixed points~$X^{H}$ in the underlying $G$-set of $X$. In this section we equip $X^{H}$ with the structure of a $W_{G}(H)$-bornological coarse space.
 Our main result is that the functor
 $$X\mapsto \Yo_{W_{G}(H)}(X^{H})\colon G\BC\to W_{G}(H)\Spc\cX$$
has the property of an (unstable) equivariant coarse homology theory. Therefore, it essentially uniquely factorizes over a motivic fixed point functor
 $$I^{H}\colon G\Spc\cX\to W_{G}(H)\Spc\cX\ .$$

 We start  
 with a  $G$-bornological coarse space $X$.  Then the  subset $X^{H}$ of $H$-fixed points 
 of~$X$  has   an induced action by the Weyl group $W_{G}(H)$. The embedding of  the set $X^{H}$ into  the completion $B_{G}X$\footnote{The completion $B_{G}X$ is the $G$-bornological coarse space obtained from $X$ by replacing its bornology $\cB$ by the   bornology $G\cB$ generated by the subsets $GB$ for all $B$ in $\cB$. The choice of this convention is motivated by the proof of Lemma \ref{geioogergregregregrege} below. } of $X$  induces a bornological coarse structure on $X^{H}$, and $X^{H}$ with this structure is a  $W_{G}(H)$-bornological coarse  space. 
   
  If $f\colon X\to Y$ is a morphism of $G$-bornological coarse spaces, then it   restricts to a morphism of $W_{G}(H)$-bornological coarse spaces $f^{H}\colon X^{H}\to Y^{H}$.
  We therefore get a functor
 $$(-)^{H}\colon G\BC\to W_{G}(H)\BC\ , \quad  X\mapsto X^{H}\ .$$ 

 
 \begin{lem}\label{f4iufof34f234f3f42}The composition
   \begin{equation}\label{}
{G}\BC\xrightarrow{(-)^{H}}W_{{G}}(H)\BC\xrightarrow{\Yo_{W_{{G}}(H)}} W_{{G}}(H)\Spc\cX
\end{equation}
   satisfies:
   \begin{enumerate}
   \item coarse invariance
   \item excision
   \item vanishing on flasques
   \item $u$-continuity.
   \end{enumerate}
\end{lem}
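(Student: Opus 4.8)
\emph{Overall strategy.} The plan is to reduce each of the four axioms for the composite to the corresponding property of the motivic functor $\Yo_{W_{G}(H)}\colon W_{G}(H)\BC\to W_{G}(H)\Spc\cX$, which by its construction as a localization \cite[Section 3.4]{buen}, \cite[Section 4.1]{equicoarse} satisfies coarse invariance, excision, vanishing on flasques and $u$-continuity. For this it suffices to show that the fixed-point functor $(-)^{H}\colon G\BC\to W_{G}(H)\BC$ transports the data entering each axiom — coarse equivalences, coarsely excisive decompositions, flasqueness witnesses, and filtered systems of invariant entourages — to data of the same kind downstairs. The recurring input is the description of the $W_{G}(H)$-bornological coarse structure on $X^{H}$ as the structure induced from the completion $B_{G}X$, together with two elementary observations: taking $H$-fixed points commutes with unions of subsets, and every invariant entourage (resp.\ bounded subset) of $X^{H}$ is the intersection with $X^{H}\times X^{H}$ (resp.\ with $X^{H}$) of a $G$-invariant entourage (resp.\ bounded subset) of $B_{G}X$, namely the one it generates after applying the $G$-action.

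\emph{Coarse invariance.} A coarse equivalence $f\colon X\to Y$ in $G\BC$ restricts to $f^{H}\colon X^{H}\to Y^{H}$, and this is again a coarse equivalence: a quasi-inverse $g$, and the invariant entourages witnessing that $gf$ and $fg$ are close to the respective identities, restrict along the inclusions of fixed-point sets into the completions. Since $\Yo_{W_{G}(H)}$ is coarsely invariant (equivalently, sends the cylinder projection $\{0,1\}_{max,max}\otimes(-)\to(-)$ to an equivalence), so is the composite; one may alternatively use the natural isomorphism $(\{0,1\}_{max,max}\otimes X)^{H}\cong\{0,1\}_{max,max}\otimes X^{H}$ coming from $B_{G}(\{0,1\}_{max,max}\otimes X)\cong\{0,1\}_{max,max}\otimes B_{G}X$, since $\{0,1\}_{max,max}$ carries the trivial $G$-action.

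\emph{Vanishing on flasques, excision, $u$-continuity.} If flasqueness of $X$ is implemented by a $G$-equivariant endomorphism $f\colon X\to X$, then $f$ preserves $X^{H}$ and the restriction $f^{H}$ implements flasqueness of $X^{H}$: closeness to the identity, the condition that $\bigcup_{n\ge 0}(f^{n}\times f^{n})(E)$ is an entourage, and the remaining compatibility with the bornology all restrict, using the observations above; hence $\Yo_{W_{G}(H)}(X^{H})$ satisfies the vanishing condition. For excision, an equivariant coarsely excisive decomposition (complementary pair $(Z,\cU)$) of $X$ is sent to $(Z^{H},\cU^{H})$ with $\cU^{H}=\{U_{i}^{H}\}$: the covering condition $(Z\cup U_{i})^{H}=Z^{H}\cup U_{i}^{H}$ is immediate, and closure of $\cU^{H}$ under thickening by invariant entourages of $X^{H}$ follows since such entourages extend to $G$-invariant entourages of $B_{G}X$ against which $\cU$ is closed; excision for the composite then follows from excision for $\Yo_{W_{G}(H)}$. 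Finally, for $u$-continuity one checks that $(X_{U})^{H}$ is $X^{H}$ equipped with the coarse structure generated by $U\cap(X^{H}\times X^{H})$ — the passage through $B_{G}(-)$ being controlled by Lemma~\ref{geioogergregregregrege} — and that $U\mapsto U\cap(X^{H}\times X^{H})$ is cofinal from the invariant entourages of $X$ onto those of $X^{H}$; combined with $u$-continuity of $\Yo_{W_{G}(H)}$ this gives an equivalence $\colim_{U}\Yo_{W_{G}(H)}((X_{U})^{H})\to\Yo_{W_{G}(H)}(X^{H})$.

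\emph{Main obstacle.} All the genuine content is bookkeeping around the completion functor $B_{G}(-)$: verifying that it is compatible with $\otimes$ by a trivially-acting space, with restriction/generation of coarse structures, and with passage to $H$-fixed points, and that invariant entourages and bounded subsets of $B_{G}X$ restrict (co)finally onto those of $X^{H}$. None of this is difficult, but it is precisely the step where the particular convention chosen for $B_{G}X$ (see the footnote above) is essential, and where one must be careful that the induced structure on $X^{H}$ is the intended one.
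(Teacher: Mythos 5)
Your overall strategy matches the paper exactly: reduce each axiom to the corresponding property of $\Yo_{W_{G}(H)}$ by showing that $(-)^{H}$ carries the relevant data downstairs, with $B_{G}(-)$ doing the bookkeeping. Your arguments for coarse invariance (via $(\{0,1\}\otimes X)^{H}\cong\{0,1\}\otimes X^{H}$), for excision (restrict the complementary pair), and for vanishing on flasques (restrict the flasqueness witness, using that $B_{G}$ preserves flasqueness by construction) are the ones the paper gives.

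There is, however, a genuine gap in your $u$-continuity step. You claim that $(X_{U})^{H}$ \emph{is} $X^{H}$ with the $W_{G}(H)$-coarse structure generated by $U\cap(X^{H}\times X^{H})$, and then run the colimit over $U$ with a cofinality argument. This identification is false in general: the coarse structure of $(X_{U})^{H}$ is induced from $B_{G}X_{U}$, so it contains $V\cap(X^{H}\times X^{H})$ for \emph{every} $V\in\cC^{G}_{X_{U}}$. In particular two fixed points that are $U^{n}$-close via a chain passing \emph{outside} $X^{H}$ belong to an entourage of $(X_{U})^{H}$, but need not be close in the structure generated by $U\cap(X^{H}\times X^{H})$ alone. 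The paper avoids this by first applying $u$-continuity of $\Yo_{W_{G}(H)}$ \emph{to} $(X_{U})^{H}$ — producing an inner colimit over $V\in\cC^{G}_{X_{U}}$ of $\Yo_{W_{G}(H)}((X^{H})_{V\cap(X^{H}\times X^{H})})$ — and then collapsing the resulting double colimit by a cofinality argument, before invoking $u$-continuity once more. Your conclusion is correct and your argument is repairable by inserting this inner $u$-continuity step, but as written the key identity you use does not hold. (Separately, the citation to Lemma~\ref{geioogergregregregrege} does not support the claim you attach it to — that lemma concerns the evaluation map $e_{S}$ and is not about coarse structures on $(X_{U})^{H}$.)
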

\begin{proof}
We use that  the functor $\Yo_{W_{G}(H)}$ (see Eq.~\eqref{34g453oighj43oig43lrf334frf}) has these properties.
   \begin{enumerate}
   \item coarse invariance: In $W_{G}(H)\BC$ the morphism  $(\{0,1\}\otimes X)^{H}\to X^{H}$ is isomorphic to the morphism 
   $ \{0,1\}\otimes X^{H}\to X^{H}$, and $\Yo_{W_{G}(H)}$ sends the latter to an equivalence.
   \item excision: If $(Z,\cY)$ is a complementary pair on $X$, then
   $(Z^{H},\cY^{H})$ is a complementary pair on $X^{H}$, where $\cY^{H}$ is defined by  member-wise application of $(-)^{H}$.  If we apply $(-)^{H}$ to  the square
   $$\xymatrix{Z\cap \cY\ar[r]\ar[d]&\cY\ar[d]\\Z\ar[r]&X}\ ,$$
   then we get the square 
    $$\xymatrix{Z^{H}\cap \cY^{H}\ar[r]\ar[d]&\cY^{H}\ar[d]\\Z^{H}\ar[r]&X^{H}}\ .$$
       The functor $\Yo_{W_{G}(H)}$ sends this square to a push-out square.
      \item  flasques: If $f\colon X\to X$ implements flasqueness of $X$, then $f^{H}\colon X^{H}\to X^{H}$  implements flasqueness of $X^{H}$. Here it is important to know that  the completion functor $X\mapsto B_{G}X$ preserves flasqueness.
            It follows that $\Yo_{W_{G}(H)}(\emptyset)\to \Yo_{W_{G}(H)}(X^{H})$ is an equivalence.
     \item $u$-continuity:
     Let $X$ be a $G$-bornological coarse space. Then we have 
     \begin{eqnarray*}
     \colim_{U\in \cC^{G}_{X}} \Yo_{W_{G}(H)}((X_{U})^{H})&\stackrel{!}{\simeq}& \colim_{U\in \cC^{G}_{X}} \colim_{V\in \cC^{G}_{X_{U}}}\Yo_{W_{G}(H)}((X^{H})_{V\cap (X^{H}\times X^{H})})\\&\stackrel{!!}{\simeq}&
     \colim_{U\in \cC^{G}_{X}}  \Yo_{W_{G}(H)}((X^{H})_{U\cap (X^{H}\times X^{H})})\\&\stackrel{!}{\simeq}&
     \Yo_{W_{G}(H)}(X^{H})\ ,
\end{eqnarray*}    
        \end{enumerate}
        where we use $u$-continuity of $\Yo_{W_{G}(H)}$ at the equivalences marked by $!$ and a cofinality consideration at $!!$.
  \end{proof}
  
  
%
%

 By the universal property of the functor $\Yo_{G}\colon G\BC\to G\Spc\cX$ the composition
 $\Yo_{W_{G}(H)}\circ (-)^{H}$ has an essentially unique colimit-preserving factorization $I^{H}$ through~$\Yo_{G}$:
   $$\xymatrix{G\BC \ar[dr]^{ \Yo_{W_{G}(H)}\circ (-)^{H}}\ar[d]^{\Yo_{G}}& \\G\Spc\cX\ar[r]^{I^{H}}&W_{G}(H)\Spc\cX}\ .$$
     
   \begin{ddd}\label{ghioegfergrereg34t}
   We call $I^{H}:G\Spc\cX \to W_{G}(H)\Spc\cX$ the motivic $H$-fixed point functor.
   \end{ddd}
   
   So if $X$ is in $G\Spc\cX$, then we can talk about its $H$-fixed points $I^{H}(X)$ as an object of $W_{G}(H)\Spc\cX$.
   
   \begin{ex} The formation of fixed points is compatible with the cone functor~\cite[Sec.~9.4]{equicoarse} $$\cO \colon G\UBC\to  G\BC$$ 
   from $G$-uniform bornological coarse spaces to $G$-bornological coarse spaces.
  Let $A$ be a $G$-uniform bornological coarse space. Then the set of $H$-fixed points $A^{H}$ has 
   a $W_{G}(H)$-uniform bornological coarse structure{,} where the bornology is again generated by the {subsets}  $GB\cap A^{H}$ for all bounded subsets $B$ of $A$.

%
%

%

   \begin{lem}\label{gireogoerg34t34erg}
   We have an equivalence $  \cO (A)^{H}\simeq \cO (A^{H})$ in $W_{G}(H)\BC$.
   \end{lem}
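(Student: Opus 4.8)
The plan is to produce the equivalence as an honest isomorphism of $W_{G}(H)$-bornological coarse spaces, by checking that the tautological bijection on underlying sets is structure-preserving. First I would use that, by the construction of the cone in \cite[Section 9.4]{equicoarse}, the underlying $G$-set of $\cO(A)$ is $[0,\infty)\times A$ with $G$ acting only through the second factor; hence its set of $H$-fixed points is $[0,\infty)\times A^{H}$, which is exactly the underlying $W_{G}(H)$-set of $\cO(A^{H})$. So the whole problem reduces to verifying that the identity bijection on $[0,\infty)\times A^{H}$ identifies the coarse structures and the bornologies of $\cO(A)^{H}$ and $\cO(A^{H})$, and is $W_{G}(H)$-equivariant.

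Next I would treat the coarse structures. Recall that the coarse structure on $\cO(A)^{H}$ consists of the entourages of $\cO(A)$ — equivalently of $B_{G}\cO(A)$, since passing to $B_{G}$ leaves the coarse structure unchanged — intersected with $([0,\infty)\times A^{H})^{\times 2}$. The hybrid coarse structure defining the cone is assembled, height by height in the $[0,\infty)$-coordinate, out of the metric coarse structure on $[0,\infty)$, the coarse structure $\mathcal{C}_{A}$, and the uniform structure $\mathcal{U}_{A}$ (the last governing how fast entourages may spread out as the height grows). The point I would make precise is that this assembly is local in the $A$-factor, so that intersecting a cone entourage with $([0,\infty)\times A^{H})^{\times 2}$ produces exactly the cone entourage built from the restricted data $\mathcal{C}_{A}\cap (A^{H})^{\times 2}$ and $\mathcal{U}_{A}\cap (A^{H})^{\times 2}$. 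By the definition of $X^{H}$ for $X$ in $G\BC$ (via the embedding into $B_{G}X$) and of the uniform bornological coarse structure on $A^{H}$, these restricted structures are precisely $\mathcal{C}_{A^{H}}$ and $\mathcal{U}_{A^{H}}$; hence the two coarse structures on $[0,\infty)\times A^{H}$ coincide.

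Then I would compare the bornologies. A bounded subset of $\cO(A)$ is contained in a generator of the form $[0,n]\times B$ with $n\in\nat$ and $B\in\mathcal{B}_{A}$; hence a bounded subset of $B_{G}\cO(A)$ is contained in some $[0,n]\times GB$, and its intersection with the $H$-fixed points is contained in $[0,n]\times(GB\cap A^{H})$. Since the bornology of $A^{H}$ is by definition generated by the sets $GB\cap A^{H}$ and the bornology of $\cO(A^{H})$ is generated by the sets $[0,n]\times B'$ with $B'\in\mathcal{B}_{A^{H}}$, the two bornologies on $[0,\infty)\times A^{H}$ agree as well. Finally both $W_{G}(H)$-actions are the restriction of the $G$-action on $[0,\infty)\times A$, hence coincide, and the whole identification is visibly natural in $A$, which yields the asserted isomorphism, hence equivalence, in $W_{G}(H)\BC$.

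The routine parts are the identification of underlying sets and the bornology comparison. The hard part will be making the slogan ``the hybrid coarse structure is local in the $A$-factor'' rigorous: one has to reproduce the exact equivariant definition of $\cO$ from \cite{equicoarse} — in particular how the uniform structure of $A$ is fed into the control on the $[0,\infty)$-direction and how the $G$-action is incorporated — and then verify commutation with taking $H$-fixed points entourage by entourage. I also expect to pay some attention to the completion $B_{G}$ that is built into the definition of $(-)^{H}$: it is precisely to make this bornology bookkeeping work — so that $G$-saturating the bornology of the cone and then restricting to $A^{H}$ reproduces the bornology generated by $GB\cap A^{H}$ — that the convention for $B_{G}$ was adopted, as hinted in the footnote of the excerpt.
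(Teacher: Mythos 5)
Your proposal follows essentially the same route as the paper's proof: produce an honest isomorphism in $W_{G}(H)\BC$ from the natural bijection $([0,\infty)\times A)^{H}\cong[0,\infty)\times A^{H}$, then check that the bornologies (generated on both sides by sets of the form $L\times(GB\cap A^{H})$) and the hybrid coarse structures agree under this bijection. The paper's treatment of the coarse-structure comparison is in fact no less terse than yours --- it simply asserts that restrictions of invariant hybrid entourages of $\cO(A)$ correspond to, and generate, the hybrid entourages of $\cO(A^{H})$ --- so you have correctly located where the remaining verification lies.
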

\begin{proof}
We have $\cO(A):=([0,\infty)\otimes A)_{h}$, were $h$ stands for the hybrid structure associated to the family
of subsets $([0,r]\times A)_{r\in \R}$ and the uniform structure \cite[Sec.~9.1 \& 9.2]{equicoarse}. 

 We have a $W_{G}(H)$-equivariant bijection of sets
$([0,\infty)\times A)^{H}\cong [0,\infty)\times A^{H}$. So we must check that it is an isomorphism of $W_{G}(H)$-bornological coarse spaces.
The bornology on the right-hand side is generated by the sets
$L\times (GB\cap A^{H})$ for bounded subsets $L$ of $[0,\infty)$ and $B$ of $A$.
We have a bijection
$L\times (GB\cap A^{H})=(L\times GB)\cap ([0,\infty)\times A)^{H}$. This shows that these sets also generate the bornology of $([0,\infty)\times A)^{H}$.

 Let $U$ be an invariant entourage of $([0,\infty)\otimes A)_{h}$.
 Then we get an entourage $U^{\prime}$ of $([0,\infty)\otimes A)_{h}^{H}$ by restriction.
It corresponds to an entourage of $([0,\infty)\otimes A^{H})_{h}$, and these entourages generate the hybrid structure.
We get an isomorphism of $W_{G}(H)$-bornological coarse spaces
$$ ([0,\infty)\otimes A)_{h}^{H} \cong ([0,\infty)\otimes A^{H})_{h} \ . \qedhere$$
 \end{proof} \heB
    \end{ex}

\subsection{The  motivic orbit functor}\label{greiojgioergergergergerg}

In this section we refine the construction of the motivic fixed point functor Definition \ref{ghioegfergrereg34t} so that we take its dependence on the subgroup $H$ into account properly.

 Recall that  $G\Orb$ denotes the { full subcategory of {the category} $G\Set$ of transitive $G$-sets and equivariant maps}.
 Let $X$ be a $G$-set.
For every $S$ in $G\Orb$ we consider the set 
\begin{equation}\label{v4toihfkj3rof3rf3rfref}
X^{(S)}:=\Hom_{G\Set}(S,X)\ .
\end{equation} 
  
 For every point  $s$ in $S$ the evaluation $e_{s}\colon X^{(S)}\to X$  provides a bijection between the set~$X^{(S)}$ and the subset $X^{G_{s}}$ of $X$, where $G_{s}$ denotes the stabilizer of $s$ in $G$.
 
 We now assume that $X$ is a $G$-bornological coarse space.
We equip $X^{(S)}$ with the   {bornological} coarse structure induced by $e_{s}\colon X^{(S)}\to B_{G}X$. This structure does not depend on the choice of the point $s$ in $S$. From now on $X^{(S)}$ denotes the bornological coarse space just described.

We consider a morphism $\phi\colon S\to T$ in $G\Orb$. It induces a map
$\phi^{*}\colon \Hom_{G\Set}(T,X)\to \Hom_{G\Set}(S,X)$, i.e., a map between the underlying sets of $ X^{(T)}$  and $X^{(S)}$.

\begin{lem}\label{wefoiwefwefwef}
The map $\phi^{*}:X^{(T)}\to X^{(S)}$ is a morphism of  bornological coarse spaces.
\end{lem}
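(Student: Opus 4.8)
The plan is to reduce the statement to two things one must check for the map $\phi^*:X^{(T)}\to X^{(S)}$: that it is controlled (sends entourages of $X^{(T)}$ into entourages of $X^{(S)}$) and that it is proper (preimages of bounded sets are bounded). Throughout I will exploit the description of the bornological coarse structure on $X^{(S)}$ as the one pulled back along an evaluation map $e_s:X^{(S)}\to B_G X$; the key initial observation is that since $\phi:S\to T$ is a $G$-map, picking any $s\in S$ and setting $t:=\phi(s)\in T$, we get a strictly commuting triangle of maps of sets
$$\xymatrix{X^{(T)}\ar[r]^{\phi^*}\ar[dr]_{e_t}&X^{(S)}\ar[d]^{e_s}\\&B_G X}$$
because $(e_t)(\psi)=\psi(t)=\psi(\phi(s))=(\phi^*\psi)(s)=(e_s\circ\phi^*)(\psi)$. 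So $e_s\circ\phi^* = e_t$ as maps of sets into $B_GX$.

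First, I would handle control. An entourage of $X^{(T)}$ is by definition generated by the preimage $(e_t\times e_t)^{-1}(V)$ of an invariant entourage $V$ of $B_GX$; its image under $\phi^*\times\phi^*$ lands, by the commuting triangle, inside $(e_s\times e_s)^{-1}(V)$, which is an entourage of $X^{(S)}$. Hence $\phi^*$ is controlled. Second, properness: a bounded subset of $X^{(S)}$ is, by the induced-structure description, contained in $e_s^{-1}(B)$ for some bounded $B$ of $B_GX$; again by the commuting triangle $(\phi^*)^{-1}(e_s^{-1}(B)) = e_t^{-1}(B)$, which is bounded in $X^{(T)}$ by definition of its bornology. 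So $\phi^*$ is proper. Together these two checks show $\phi^*$ is a morphism of bornological coarse spaces.

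The only genuinely delicate point — and the one I would expect to be the main obstacle — is the independence of the constructed structure on $X^{(S)}$ from the chosen basepoint $s\in S$, together with the compatibility this independence must have with the basepoint $t=\phi(s)\in T$ used on the $T$-side; but this is exactly what was already asserted in the paragraph preceding the lemma ("This structure does not depend on the choice of the point $s$"), so I may invoke it. Given that, the argument is as above and essentially formal: the whole content is the factorization $e_s\circ\phi^* = e_{\phi(s)}$, after which control and properness of $\phi^*$ are immediate from control-by-pullback and properness-by-pullback of the induced structures. I would therefore write the proof as: fix $s\in S$, set $t:=\phi(s)$, record the identity $e_s\circ\phi^*=e_t$, and then read off the two required properties from the definitions of the pulled-back bornological coarse structures on $X^{(S)}$ and $X^{(T)}$.
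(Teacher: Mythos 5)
Your proof is correct and follows essentially the same route as the paper's: both hinge on the identity $e_s\circ\phi^*=e_t$ and the fact that $X^{(S)}$, $X^{(T)}$ carry structures induced from $B_GX$ along the evaluation maps. The paper phrases this by factoring the commuting square through the inclusion $X^{G_t}\hookrightarrow X^{G_s}$ of subspaces with induced structure, while you unpack directly what controlledness and properness mean for the pulled-back structures; the content is the same, and your remark about basepoint-independence being "the main obstacle" is a slight overstatement, since once $s$ and $t=\phi(s)$ are fixed the verification goes through with no further compatibility needed.
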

\begin{proof}
We fix a point $s$ in $S$ and let $t:=\phi(s)$. 
Note that $G_{s}$ is a subgroup of $G_{t}$. We get the diagram of {sets}
$$\xymatrix{X^{(T)}\ar[r]^{e_{t}}\ar[d]^{\phi^{*}}&X^{G_{t}}\ar[d]\ar[r]&B_{G}X\ar@{=}[d]\\X^{(S)}\ar[r]^{e_{s}}&X^{G_{s}}\ar[r]&B_{G}X}\ .$$
Since the subsets $X^{G_{t}}$ and $X^{G_{s}}$ have the induced structures from $B_{G}X$ the middle vertical
map is a morphism of bornological coarse spaces. Since $e_{t}$ and $e_{s}$ are isomorphisms of bornological coarse spaces by definition, also $\phi^{*}$ is a morphism of bornological coarse spaces. 
\end{proof}

  For every $G$-bornological coarse space $X$ we have just  
 defined a functor
$$\hat Y(X)\colon G\Orb^{op}\to \BC\ , 
{\quad S\mapsto X^{(S)}}
$$
This construction is also  functorial in $X$ and gives a functor
$$\hat Y\colon G\BC\to \Fun(G\Orb^{op},\BC)\ .$$
Let $\Yo\colon \BC\to {\Spc}\cX$   be the functor in Eq~\eqref{344frf}  which sends a bornological coarse space to its motive.
\begin{lem}\label{34i34o2t23r23r23r} The composition \begin{equation}\label{f34fpo4fr3fg3f}
\tilde Y\colon G\BC\xrightarrow{\hat Y} \Fun(G\Orb^{op},\BC)\xrightarrow{\Yo}  \Fun(G\Orb^{op},\Spc\cX)
\end{equation}
satisfies
\begin{enumerate}
\item coarse invariance
\item excision
\item vanishing on flasques
\item $u$-continuity.
\end{enumerate}
\end{lem}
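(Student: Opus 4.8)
The plan is to reduce the four properties for the composite functor $\tilde Y$ in \eqref{f34fpo4fr3fg3f} to the corresponding statement ``pointwise in $S$'', i.e.\ to the already-established Lemma \ref{f4iufof34f234f3f42}. Indeed, colimits and push-out squares in the functor category $\Fun(G\Orb^{op},\Spc\cX)$ are computed objectwise, and the functors ``coarse cylinder'', ``complementary pair'', and ``$U$-entourage thickening'' on $G\BC$ all commute with $\hat Y$ in the obvious way after evaluation at a fixed $S$ in $G\Orb$. The key observation tying everything to Lemma \ref{f4iufof34f234f3f42} is that, fixing a point $s\in S$ with stabilizer $H:=G_s$, the evaluation $e_s\colon X^{(S)}\to X^{H}$ is an isomorphism of bornological coarse spaces (natural in $X$), so that the functor $X\mapsto \Yo(\hat Y(X)(S))$ is isomorphic to the functor $X\mapsto \Yo((-)^{H})$ considered in Lemma \ref{f4iufof34f234f3f42} (forgetting the $W_G(H)$-action, which is the role of the plain functor $\Yo\colon\BC\to\Spc\cX$ here rather than the equivariant one). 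So each of the four assertions for $\tilde Y$ will follow by evaluating at every $S$ and invoking the corresponding item of Lemma \ref{f4iufof34f234f3f42}.

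Concretely, I would proceed item by item. For coarse invariance: the projection $\{0,1\}\otimes X\to X$ in $G\BC$ is sent by $\hat Y$, after evaluation at $S$, to $\{0,1\}\otimes X^{(S)}\to X^{(S)}$ (using $(\{0,1\}\otimes X)^{(S)}\cong\{0,1\}\otimes X^{(S)}$, which is immediate from \eqref{v4toihfkj3rof3rf3rfref} and the description of the bornological coarse structure via $e_s$), and $\Yo$ sends this to an equivalence. For excision: if $(Z,\cY)$ is a complementary pair on $X$ then $(Z^{(S)},\cY^{(S)})$ — where $\cY^{(S)}$ is obtained by memberwise application of $(-)^{(S)}$ — is a complementary pair on $X^{(S)}$, and applying $(-)^{(S)}$ to the excision square for $(Z,\cY)$ yields the excision square for $(Z^{(S)},\cY^{(S)})$; then $\Yo$ sends the latter to a push-out square and push-out squares in $\Fun(G\Orb^{op},\Spc\cX)$ are detected objectwise. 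For vanishing on flasques: if $f\colon X\to X$ implements flasqueness of $X$ in $G\BC$, then $f^{(S)}\colon X^{(S)}\to X^{(S)}$ implements flasqueness of $X^{(S)}$ — here one uses, exactly as in Lemma \ref{f4iufof34f234f3f42}, that the completion functor $X\mapsto B_G X$ preserves flasqueness, since the bornological coarse structure on $X^{(S)}$ is pulled back along $e_s\colon X^{(S)}\to B_G X$. Hence $\Yo(\hat Y(\emptyset))\to\Yo(\hat Y(X))$ is an equivalence at every $S$. For $u$-continuity: one checks $(X_U)^{(S)}\cong (X^{(S)})_{U\cap(X^{(S)}\times X^{(S)})}$ and runs the same cofinality argument as in the proof of Lemma \ref{f4iufof34f234f3f42}, using $u$-continuity of $\Yo$, to get $\colim_{U\in\cC^{G}_{X}}\Yo((X_U)^{(S)})\simeq\Yo(X^{(S)})$; since colimits in $\Fun(G\Orb^{op},\Spc\cX)$ are objectwise, this gives $u$-continuity of $\tilde Y$.

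The one point requiring a little care — and the closest thing to an obstacle — is functoriality in $S$: I must make sure that the identifications used above are compatible with the restriction maps $\phi^{*}\colon X^{(T)}\to X^{(S)}$ for $\phi\colon S\to T$ in $G\Orb$, so that the pointwise equivalences/push-out squares assemble to equivalences/push-out squares of functors $G\Orb^{op}\to\Spc\cX$. This is handled by Lemma \ref{wefoiwefwefwef} (which guarantees $\hat Y(X)$ really is a functor on $G\Orb^{op}$ with values in $\BC$) together with the naturality in $X$ of all the constructions involved (cylinder, complementary pairs, flasqueness-implementing maps, entourage thickenings). Since each of these operations is defined on the underlying sets and then equipped with the structure pulled back from $B_G X$, and $\phi^{*}$ is induced by precomposition with $S\to T$, the required squares of bornological coarse spaces commute on the nose; applying $\Yo$ then gives the statement in $\Fun(G\Orb^{op},\Spc\cX)$. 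Beyond this bookkeeping, the proof is a direct transcription of the proof of Lemma \ref{f4iufof34f234f3f42}, so I would write it as: ``We argue exactly as in the proof of Lemma \ref{f4iufof34f234f3f42}, noting that for each $S$ in $G\Orb$ and $s\in S$ the functor $X\mapsto\hat Y(X)(S)$ is naturally isomorphic to $X\mapsto X^{G_s}$ via $e_s$, and that colimits and push-out squares in $\Fun(G\Orb^{op},\Spc\cX)$ are computed objectwise.''
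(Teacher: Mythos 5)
Your proposal is correct and matches the paper's approach: the paper's own proof is the one-liner ``completely analogous to the proof of Lemma \ref{f4iufof34f234f3f42},'' and your objectwise reduction via the natural isomorphisms $e_s\colon X^{(S)}\to X^{G_s}$, together with the observation that (co)limits and push-out squares in $\Fun(G\Orb^{op},\Spc\cX)$ are detected objectwise, is precisely how that analogy is cashed out. The only inessential imprecision is the claimed on-the-nose identification $(X_U)^{(S)}\cong(X^{(S)})_{U\cap(X^{(S)}\times X^{(S)})}$ in the $u$-continuity step (the coarse structure on the left is generated by restrictions of all invariant entourages of $X_U$, not just of $U$), but since you immediately fall back on the double-colimit-and-cofinality argument from the proof of Lemma \ref{f4iufof34f234f3f42} the content is unaffected.
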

\begin{proof}
The proof is completely analogous to the proof of Lemma \ref{f4iufof34f234f3f42}.
\end{proof}

\begin{rem}
We use the same notation $\tilde Y$ as in \eqref{v4toi4hgkjnetkvevv} since the functor \eqref{f34fpo4fr3fg3f} just described   is  the bornological coarse analogue of the functor there.\hrB
\end{rem}

%
By the universal property of the functor $\Yo_{G}\colon G\BC\to G\Spc\cX$ the functor $\tilde Y$ has  an essentially unique colimit-preserving factorization $Y$
  $$\xymatrix{G\BC \ar[dr]^{  \tilde  Y}\ar[d]^{\Yo_{G}}& \\ G\Spc\cX\ar[r]^{Y}& \Fun(G\Orb^{op},\Spc\cX)}$$
 through  $\Yo_{G}$   \begin{ddd}\label{rgiorgefewfwefwef}
We call $Y\colon G\Spc\cX\to \Fun(G\Orb^{op},\Spc\cX)$ the motivic orbit functor. \hB
\end{ddd}

\begin{rem}\label{efiewfwefwefw}
By Elmendorf's theorem the topological analogue of $Y$ is  the equivalence~\eqref{rvelkm3l4kgferveve}.
%
%
%
In the case of bornological coarse spaces
we do not know an analogue of Elmendorf's theorem,  i.e., we do not know whether $Y$ is an equivalence. \hrB
\end{rem}

\begin{rem}
The categories $G\Spc\cX$, $\Spc\cX$ and $ \Fun(G\Orb^{op},\Spc\cX)$ are presentable. By construction the functor
$$Y\colon G\Spc\cX\to \Fun(G\Orb^{op},\Spc\cX)$$ preserves all colimits. It is therefore a part of an adjunction
$$Y:G\Spc\cX\leftrightarrows \Fun(G\Orb^{op},\Spc\cX):Z\ .$$ \hrB
\end{rem}

 
\subsection{Bredon-style  equivariant coarse homology theories} 
 
 In this section we introduce a construction of equivariant coarse homology theories from diagrams of (non-equivariant) coarse homology theories indexed by the orbit category. These equivariant  coarse homology theories
 are called  Bredon-style  equivariant coarse homology theories (see Definition \ref{brstyle}).

Let $\bC$ be a cocomplete stable $\infty$-category and 
 consider a functor 
 $$E\colon G\Orb\to \Fun^{\colim}(\Spc\cX,\bC)\ .$$
 If $X$  is in $\Fun(G\Orb^{op},\Spc\cX)$, then we can form the functor
 $$E\circ X\colon G\Orb\times G\Orb^{op}\to \bC\ , \quad (S,T)\mapsto E(S)(X(T))\ .$$

  \begin{ddd}\label{fuifehwieufhf23f2323rr233r2r3}
 We define $$E^{G}(X):= \int^{G\Orb} E\circ X\ .$$
 \end{ddd}
Note that the construction is functorial in $E$ and $X$. 

Recall the motivic orbit functor $Y\colon G\Spc\cX\to \Fun(G\Orb^{op},\Spc\cX)$ defined in Definition~\ref{rgiorgefewfwefwef}.
We consider a functor  $E\colon G\Orb\to \Fun^{\colim}(\Spc\cX,\bC)$.
\begin{lem}\label{ewfiwefwefewf}
The functor $$E^{G}\circ Y\colon G\Spc\cX\to \bC$$
preserves colimits.
\end{lem}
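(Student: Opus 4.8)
The plan is to exhibit $E^{G}\circ Y$ as a composite of colimit-preserving functors. First recall that $Y$ is a left adjoint (it sits in the adjunction $Y:G\Spc\cX\leftrightarrows \Fun(G\Orb^{op},\Spc\cX):Z$ recorded above) and hence preserves all colimits; so it suffices to show that the functor
$$E^{G}:\Fun(G\Orb^{op},\Spc\cX)\to \bC\ ,\qquad X\mapsto \int^{G\Orb} E\circ X$$
preserves colimits.

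Next I would rewrite the coend according to \eqref{referfrggrgwergergergrg} as
$$E^{G}(X)\simeq \colim_{\Tw(G\Orb)^{op}}\big((E\circ X)\circ \pi^{op}\big)\ ,$$
where $\pi^{op}:\Tw(G\Orb)^{op}\to G\Orb\times G\Orb^{op}$ is a fixed functor not depending on $X$. This displays $E^{G}$ as the composite of three functors: (i) the functor $\Fun(G\Orb^{op},\Spc\cX)\to \Fun(G\Orb\times G\Orb^{op},\bC)$ sending $X$ to $E\circ X$; (ii) restriction along $\pi^{op}$, from $\Fun(G\Orb\times G\Orb^{op},\bC)$ to $\Fun(\Tw(G\Orb)^{op},\bC)$; and (iii) the functor $\colim_{\Tw(G\Orb)^{op}}\colon \Fun(\Tw(G\Orb)^{op},\bC)\to \bC$.

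I would then check that each of these three preserves colimits. Functor (iii) preserves colimits since colimits commute with colimits. Functor (ii) preserves colimits because colimits in functor $\infty$-categories are computed objectwise, and restriction along any functor manifestly preserves objectwise colimits. For functor (i), colimits in $\Fun(G\Orb^{op},\Spc\cX)$ and in $\Fun(G\Orb\times G\Orb^{op},\bC)$ are again objectwise, so for a colimit diagram $(X_{i})$ and fixed $S,T$ in $G\Orb$ one has
$$\big(E\circ (\colim_{i}X_{i})\big)(S,T)=E(S)\big((\colim_{i}X_{i})(T)\big)=E(S)\big(\colim_{i}X_{i}(T)\big)\simeq \colim_{i}E(S)(X_{i}(T))\ ,$$
the last equivalence because $E(S)\in \Fun^{\colim}(\Spc\cX,\bC)$. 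Hence (i) preserves colimits, $E^{G}$ does, and therefore so does $E^{G}\circ Y$.

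The argument is essentially formal; the one point deserving care is the observation that the diagram category $\Tw(G\Orb)^{op}$ and the functor $\pi^{op}$ occurring in the coend presentation are independent of the variable $X$, so that $E^{G}$ really does factor through a fixed diagram category as in steps (ii)--(iii). Everything else reduces to the standard interaction between objectwise colimits in presheaf categories and postcomposition by colimit-preserving functors, so I do not expect a serious obstacle here.
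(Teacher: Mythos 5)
Your proof is correct and follows essentially the same three-step route as the paper: $Y$ preserves colimits, postcomposition with the colimit-preserving functors $E(S)$ preserves colimits objectwise, and the coend is itself a colimit. The only cosmetic difference is that you justify colimit-preservation of $Y$ via the adjunction $Y\dashv Z$, whereas in the paper this is known ``by construction'' (and the adjunction is in fact deduced from it), so your reasoning runs that one step in the opposite logical direction -- harmless, but worth noticing.
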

\begin{proof}
We  note that $Y$ preserves colimits by construction. Since $E(S)$ preserves colimits for every $S$ in $G\Orb$ the functor 
$X\mapsto E\circ Y(X)$ preserves colimits. Finally, forming  the coend preserves colimits.
\end{proof}
Lemma \ref{ewfiwefwefewf} has the following immediate corollary:
\begin{kor} \label{gih42iuhjfierfwfef}
The functor $$E^{G}\circ \tilde Y\simeq E^{G}\circ Y\circ \Yo_{G}\colon G\BC\to \bC$$
is a $\bC$-valued equivariant  coarse   homology theory.
\end{kor}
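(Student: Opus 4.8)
The plan is to verify the four axioms of an equivariant coarse homology theory for the composite $E^{G}\circ \tilde Y$, reducing each to the already-established properties of $\tilde Y$ from Lemma \ref{34i34o2t23r23r23r} together with the colimit-preservation established in Lemma \ref{ewfiwefwefewf}. The key point is that $E^{G}\circ \tilde Y \simeq (E^{G}\circ Y)\circ \Yo_{G}$: the left-hand side is built out of $\hat Y$ and $\Yo$ on $G\BC$ directly, while by the universal factorization defining $Y$ (Definition \ref{rgiorgefewfwefwef}) we have $\tilde Y \simeq Y\circ \Yo_{G}$, and then postcomposition with $E^{G}$ gives the claimed identification.

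First I would recall that a $\bC$-valued equivariant coarse homology theory is by definition (\cite[Definition 3.10]{equicoarse}) a functor $G\BC\to \bC$ that is coarsely invariant, satisfies excision, vanishes on flasque $G$-bornological coarse spaces, and is $u$-continuous. I would then observe that $E^{G}\circ \tilde Y$ is a colimit-preserving functor out of $G\Spc\cX$ precomposed with $\Yo_{G}$: indeed, Lemma \ref{ewfiwefwefewf} shows $E^{G}\circ Y\colon G\Spc\cX\to\bC$ preserves colimits, and $\tilde Y\simeq Y\circ\Yo_{G}$. By the universal property of $\Yo_{G}$ (recalled right after \eqref{34g453oighj43oig43lrf334frf}, cf. \cite[Corollary 3.34]{buen} and the equivariant analog in \cite{equicoarse}), pull-back along $\Yo_{G}$ identifies the $\infty$-category of colimit-preserving functors $G\Spc\cX\to\bC$ with the $\infty$-category of $\bC$-valued equivariant coarse homology theories. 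Hence any functor of the form (colimit-preserving functor on $G\Spc\cX$) $\circ\,\Yo_{G}$ is automatically a $\bC$-valued equivariant coarse homology theory, and $E^{G}\circ\tilde Y$ is of exactly this form.

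Alternatively, and perhaps more self-containedly, I would check the four axioms directly for $\tilde Y$ composed with $E^{G}$: Lemma \ref{34i34o2t23r23r23r} already states that $\tilde Y\colon G\BC\to\Fun(G\Orb^{op},\Spc\cX)$ is coarsely invariant, excisive, vanishes on flasques, and is $u$-continuous. Coarse invariance and vanishing on flasques are conditions asserting that certain morphisms become equivalences, so they are preserved by applying the functor $E^{G}$. Excision asks that $\tilde Y$ send the square attached to a complementary pair to a push-out square; since $E^{G}=\int^{G\Orb}E\circ(-)$ preserves colimits (coends of colimit-preserving functors preserve colimits, as in the proof of Lemma \ref{ewfiwefwefewf}), it sends push-out squares to push-out squares, giving excision for $E^{G}\circ\tilde Y$. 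Similarly $u$-continuity is the statement that the canonical map $\colim_{U}E^{G}(\tilde Y(X_{U}))\to E^{G}(\tilde Y(X))$ is an equivalence, which follows from $u$-continuity of $\tilde Y$ and colimit-preservation of $E^{G}$.

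I do not expect a serious obstacle here: the statement is labelled an "immediate corollary" precisely because all the content sits in the preceding two lemmas. The only mild subtlety is making sure that the identification $\tilde Y\simeq Y\circ\Yo_{G}$ used to rewrite $E^{G}\circ\tilde Y$ is invoked correctly — this is exactly the essentially unique colimit-preserving factorization through $\Yo_{G}$ that defines $Y$ — and that ``preserving colimits'' transports the four axioms as indicated. So the proof is a one-line deduction: apply Lemma \ref{ewfiwefwefewf} to see $E^{G}\circ Y$ is colimit-preserving on $G\Spc\cX$, then invoke the universal property of $\Yo_{G}$ to conclude that $E^{G}\circ Y\circ\Yo_{G}\simeq E^{G}\circ\tilde Y$ is a $\bC$-valued equivariant coarse homology theory.
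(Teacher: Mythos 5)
Your first argument is exactly the paper's reasoning: Lemma \ref{ewfiwefwefewf} gives that $E^{G}\circ Y$ is colimit-preserving on $G\Spc\cX$, and the universal property of $\Yo_{G}$ (recalled in the Conventions section) identifies colimit-preserving functors out of $G\Spc\cX$ with $\bC$-valued equivariant coarse homology theories, so $E^{G}\circ Y\circ\Yo_{G}\simeq E^{G}\circ\tilde Y$ is one. Your alternative direct verification via Lemma \ref{34i34o2t23r23r23r} is also valid but unnecessary; the paper's one-line deduction is precisely the route you outline first.
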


Usually we will shorten the notation and write
$E^{G}(X)$ instead of $E^{G}(Y(\Yo_{G}(X)))$ for $X$ in $G\BC$.

%
\begin{rem}\label{jkgyglglygy}
Since we do not know  and  do not expect that $$Y\colon G\Spc\cX\to \Fun(G\Orb^{op},\Spc\cX)$$ is an equivalence  (see Remark \ref{efiewfwefwefw}),  in contrast to the topological case
we do not expect     that all $\bC$-valued equivariant coarse homology theories
are of the form $E^{G}$ for some functor $$E\colon G\Orb\to \Fun^{\colim}(\Spc\cX,\bC)\ .$$
 \hrB
\end{rem}


\begin{ddd}\label{brstyle}
 We call the  equivariant coarse homology theories of the form $E^{G}$  from Definition \ref{fuifehwieufhf23f2323rr233r2r3} for some $E\colon G\Orb\to \Fun^{\colim}(\Spc\cX,\bC)$     Bredon-style equivariant coarse homology theories. 
 \end{ddd}
 \begin{ex}
 Let $F\colon \Spc\cX\to \bC$ be a colimit-preserving functor, i.e.~{a} non-equivariant coarse homology theory.
 Then we can consider the constant functor
 $$\underline{\underline{F}}\colon G\Orb\to \Fun^{\colim}(\Spc\cX,\bC)$$
 with value $F$.
 For $X$ in $\Fun(G\Orb^{op},\Spc\cX)$ the
functor $\underline{\underline{F}}\circ X\colon G\Orb\times G\Orb^{op}\to \bC$ is constant in the first argument. 
This implies
$$ \underline{\underline{F}}^{G}(X)\simeq \int^{G\Orb} \underline{\underline{F}}\circ X\simeq \colim_{G\Orb^{op}} (F\circ X) \simeq  F(\colim_{G\Orb^{op}} X )\ ,$$
where for the second equivalence we use that $F$ preserves colimits.
\heB
\end{ex}

\begin{ex}
We consider a  colimit-preserving functor
  $F\colon \Spc\cX\to \bC$ and  form the functor
$$\underline{F}\colon G\Orb\to \Fun^{\colim}(\Spc\cX,\bC)\ , \quad \underline{F}(S)(X):=\Res^{G}(S)\otimes F( X)\ ,$$
where $\Res^{G}(S)$ is the underlying set of $S$.
%

For  $X$ in $\Fun(G\Orb^{op},\Spc\cX)$
we then  have  a canonical equivalence \begin{equation}\label{qwefwfewqd}\underline{F}^{G}(X)\simeq F(X(G))\ .
\end{equation} 
In order to derive this equivalence
%
we note that the  functor $S\mapsto \Res^{G}(S)$ is corepresented by the object $G$, i.e., we have an equivalence
$ \Hom_{G\Orb} (G,S)\simeq \Res^{G}(S)$.  
 It follows from the co-Yoneda lemma  that $$
 \underline{F}^{G}(X) \simeq \int^{G\Orb} \Hom_{G\Orb} (G,-)\otimes F(X(-)) 
  \simeq F(X(G))\ .
 $$
  \heB
  \end{ex}

\begin{ex} 
Let $\rho\colon H\to G$ be a  homomorphism of groups. 
It induces an induction functor  $$\Ind_{\rho}\colon H\Orb\to G\Orb\ , \quad S\mapsto G\times_{H}S\ .$$ 
We write $\rho^{*}$ for the pull-back of functors along $\Ind_{\rho}$. Since $\Fun (\Spc\cX,\bC)$ is cocomplete,   we have an adjunction
$$\rho_{!}: \Fun(H\Orb,\Fun (\Spc\cX,\bC))\leftrightarrows  \Fun(G\Orb,\Fun (\Spc\cX,\bC)):\rho^{*}\ .$$
We  further note that
$\rho_{!}$ restricts to a functor $$\rho_{!}\colon \Fun(H\Orb,\Fun^{\colim} (\Spc\cX,\bC))\to  \Fun(G\Orb,\Fun^{\colim} (\Spc\cX,\bC))\ .$$

For a complete $\infty$-category $\bD$ (we will apply this for $D=\Spc$) we also have an adjunction 
$$\rho^{*}:\Fun(H\Orb^{op},\bD)\leftrightarrows \Fun(G\Orb^{op},\bD):\rho_{*}\ .$$
For $X$ in $\Fun(G\Orb^{op},\Spc\cX)$ and $F$ in $\Fun(H\Orb,\Fun^{\colim} (\Spc\cX,\bC))$   we have 
a canonical induction-restriction 
 equivalence
\begin{equation}\label{}(\rho_{!}F){^G}(X)\simeq F^{H}(\rho^{*}X)\ .
\end{equation}  
Its derivation involves unfolding definitions and formal properties of adjunctions involving various right- and left Kan extension functors.
\heB  \end{ex}

 \subsection{Associated Bredon-style homology theories}\label{f42oifj2o3f32f32e}
 
In the present section we  associate to every equivariant coarse homology theory $E$ {an} approximation  $E^{\Bredon}\to E$ by a Bredon-style equivariant coarse homology theory. Then we investigate the question under which conditions on $E$ and $X$ {in $G\BC$}
the  morphism
$$E^{\Bredon}(X)\to E(X)$$ 
is an equivalence, {and we will provide examples where it is not.}


We consider the composition of functors
\begin{equation}\label{fwefewfewf}
   G\Orb \times \BC  \xrightarrow{(-)_{min,max}\otimes-} G\BC \xrightarrow{\Yo_{G}} G\Spc\cX   
\end{equation}  which sends 
$ (S,X)$  in $  G\Orb \times \BC$ to $ \Yo_{G}(S_{min,max}\otimes X)$ in $G\Spc\cX$.
Using~\cite[Lem.~4.17]{equicoarse}, we obtain an essentially  unique factorization
$$\xymatrix{G\Orb \times  \BC\ar[d]_{\id\times \Yo}\ar[r]^(0.65){\eqref{fwefewfewf}}&G\Spc\cX \\G\Orb \times \Spc\cX\ar@{-->}[ru]_{\hspace{0.9cm}(S,X)\mapsto S_{min,max}\otimes X }&}$$
 which preserves colimits  in its second argument.

  Let $\bC$ be a cocomplete stable $\infty$-category and
 consider   $E$ in $\Fun^{\colim}(G\Spc \cX,\bC)$. 

  \begin{ddd} \label{etgioegggergergergeg}
 We define the functor \begin{equation}\label{g54ljklefwefewfwf}
\underline{E}\colon G\Orb\to \Fun^{\colim}(\Spc\cX,\bC)\ , \quad \underline{E}(S)(X):=  E( S_{min,max}\otimes X)\ .
\end{equation}
\end{ddd}
Furthermore 
we let $\underline{E}^{G}$ in $\Fun({\Fun(G\Orb^{op},\Spc\cX)},\bC)$ denote the result of the application of 
  Definition \ref{fuifehwieufhf23f2323rr233r2r3} to $\underline{E} $.

  Recall the Definition \ref{rgiorgefewfwefwef} of the motivic orbit functor $Y {\colon G\Spc\cX\to \Fun(G\Orb^{op},\Spc\cX)}$.

\begin{ddd}\label{rgfrihgioffwfwfwefewf}
We call $E^{\Bredon}:=\underline{E}^{G}\circ Y$ the Bredon-style equivariant coarse homology theory associated to $E$.
\end{ddd}


The name \emph{equivariant coarse homology theory} is justified by Corollary   \ref{gih42iuhjfierfwfef}.

 \begin{rem}
 The functor $$G\Orb\to \Fun^{\colim}(\Spc\cX,G\Spc\cX)\ , \quad S\mapsto (X\mapsto S_{min,max}\otimes X)$$
 is an attempt to produce an analogue of the Yoneda embedding $$i\colon G\Orb\to G\Top[W_{G}^{-1}]  \simeq 
 \Fun^{\colim}( \Spc,  G\Top[W_{G}^{-1}]) \ , \quad S\mapsto S_{disc}\mapsto (X\mapsto S_{disc}\otimes X)$$ in equivariant homotopy theory.  By Elmendorf's theorem the image of the latter   generates  its target $G\Top[W_{G}^{-1}]$ which implies that  every equivariant homology theory $F$   (a colimit preserving functor  $F\colon G\Top[W_{G}^{-1}]\simeq \PSh(G\Orb)\to \bC$) is a Bredon-style homology theory of the form
 $F\simeq E^{G}$ as  introduced in Definition \ref{uiweogwerwwefwef} with $E:=i^{*}F$.
 
 As we will see below we do not have analogous statements in the coarse case.
 By Corollary~\ref{wtgijowgwrefwerfrfw}, the image of
 $G\Orb \times \Spc\cX\to G\Spc\cX$ does not generate the target. Furthermore,
 by Corollary \ref{weijgowegrefweferf}  we know that in general the Bredon  approximation \eqref{f24oif} 
 of an equivariant coarse homology theory is not an equivalence. \hrB
 \end{rem}

 Recall from  \eqref{v4toihfkj3rof3rf3rfref} the notation  $X^{(S)}$   for the bornological coarse space given by the set $\Hom_{G\Set}(S,X)$  with the structures described in Subsection \ref{greiojgioergergergergerg}.
 For $X$ in $G\BC$ and $S$ in $G\Orb$ we consider the evaluation map \begin{equation}\label{qeklnklnqklnwefqefqwefqefq}
e_{S,X}\colon S_{min,max}\otimes X^{(S)}\to X\ .
\end{equation}
 \begin{lem}\label{geioogergregregregrege}
 The map $e_{S,X}$ is a morphism of $G$-bornological coarse spaces.
 \end{lem}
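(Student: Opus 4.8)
The plan is to unwind the definitions and check the defining conditions of a morphism in $G\BC$ separately: equivariance, compatibility with the coarse structure (the map is controlled), and compatibility with the bornologies (preimages of bounded sets are bounded). Recall that $X^{(S)} = \Hom_{G\Set}(S,X)$ carries the bornological coarse structure induced via any evaluation $e_s\colon X^{(S)}\to B_G X$ for a chosen point $s\in S$, and $S_{min,max}$ has the minimal (discrete) coarse structure and the maximal bornology; the tensor $\otimes$ on $G\BC$ is as in \cite[Example 2.30]{buen}, \cite[Section 4.3]{equicoarse}. The map $e_S$ sends $(s',\phi)\in S\times X^{(S)}$ to $\phi(s')\in X$; $G$-equivariance is immediate from the diagonal $G$-action on $S_{min,max}\otimes X^{(S)}$ and the fact that the evaluation pairing is $G$-equivariant.

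First I would check controlledness. Any entourage of $S_{min,max}\otimes X^{(S)}$ is contained in $\diag_S \times V$ for some entourage $V$ of $X^{(S)}$, since $S$ carries the minimal coarse structure. An entourage $V$ of $X^{(S)}$ is, by definition of the induced structure, the restriction along $e_s\times e_s$ of an entourage $W$ of $B_G X$; but the coarse structure of $B_G X$ agrees with that of $X$ (the completion functor only enlarges the bornology, not the coarse structure), so $W$ is an entourage of $X$. For a pair $((s', \phi),(s',\phi'))\in \diag_S\times V$ we have $\phi(s)$ and $\phi'(s)$ being $W$-close in $X$; I would then argue that $\phi(s')$ and $\phi'(s')$ are close by writing $s' = g s$ for suitable $g\in G$ (possible since $S$ is transitive), using $G$-equivariance of $\phi,\phi'$ to get $\phi(s')=g\phi(s)$, $\phi'(s')=g\phi'(s)$, and invoking that the coarse structure of $X$ is $G$-invariant so that $(g\phi(s),g\phi'(s))$ lies in $GW$, which is again an entourage of the $G$-bornological coarse space $X$. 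Hence $e_S$ is controlled.

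Next I would check that $e_S$ is proper, i.e. that preimages of bounded subsets of $X$ are bounded in $S_{min,max}\otimes X^{(S)}$. A bounded subset of the tensor product is contained in a product $F\times B'$ where $F\subseteq S$ is finite (as $S_{min,max}$ has the maximal bornology, \emph{every} subset of $S$ is bounded, so this imposes nothing) and $B'\subseteq X^{(S)}$ is bounded. The point of the choice $B_G X$ in the construction of $X^{(S)}$ — flagged in the footnote to the construction — is precisely that the bounded subsets of $X^{(S)}$ are the restrictions along $e_s$ of the $G$-invariant bornology $G\cB$ of $B_G X$. So if $B\subseteq X$ is bounded, then $e_S^{-1}(B)\subseteq S\times \{\phi : \phi(s')\in B \text{ for some } s'\in S\}$; writing $s'=gs$ and using equivariance, $\phi(s')\in B$ forces $\phi(s)\in g^{-1}B\subseteq GB$, and $GB$ is a bounded set of $B_G X$, so $e_s(\{\phi : \exists s',\ \phi(s')\in B\})$ lands in the bounded set $GB$ of $B_G X$, whence that set of $\phi$'s is bounded in $X^{(S)}$. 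Since $S$ is bounded in $S_{min,max}$, the preimage is contained in a product of bounded sets and is therefore bounded.

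The main obstacle I anticipate is the bornological step: one must be careful that the subset of $\phi\in X^{(S)}$ with $\phi(s')\in B$ for \emph{some} $s'$ (rather than for the distinguished $s$) is still bounded, and this is exactly where the completion $B_G X$ enters — replacing the bornology by the $G$-invariant one generated by $GB$ makes the estimate $\phi(s)\in g^{-1}B\subseteq GB$ land inside the bornology. I would also double-check that I have the variance of $e_s$ and the direction of the induced structure right (the structure on $X^{(S)}$ is the coarsest making $e_s$ a morphism, equivalently the one pulled back from $B_G X$), so that ``restriction of entourages/bounded sets'' is the correct description. With these two verifications in hand the lemma follows, and the proof is genuinely parallel to (and slightly simpler than) the verification in Lemma \ref{wefoiwefwefwef}.
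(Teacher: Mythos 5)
Your proposal is correct and follows essentially the same two-step verification (controlledness, then compatibility with bornologies) as the paper's proof, with the same key use of the $G$-completion $B_GX$ to absorb the translates $g^{-1}B \subseteq GB$ in the bornological step. The only cosmetic difference is that the paper starts with a $G$-invariant entourage of $X$ from the outset (these are cofinal in the coarse structure), while you start with an arbitrary one and invoke $G$-invariance afterward to conclude $GW$ is an entourage; both are fine.
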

 \begin{proof}
 The map $e_{S,X}$ is $G$-equivariant. 
 Let $U$ be a $G$-invariant entourage of $X$ and $U_{S}$ be the induced entourage on $X^{(S)}$ via the evaluation $e_{s_{0}}\colon X^{(S)}\to X$ at
 $s_{0}$ in $S$. Then $$e_{S,X}(\diag_{S}\times U_{S})=\bigcup_{g\in G} g e_{s_{0}}(U_{S})\ .$$ Since $U$ was invariant and
 $e_{s_{0}}(U_{S})\subseteq U$ we have
 $e_{S,X}(\diag_{S}\times U_{S})\subseteq U$. This shows that~$e_{S,X}$ is a controlled map.
 
 Let $B$ be a bounded subset of $X$. Then $$e_{S,X}^{-1}(B)=\bigcup_{s\in S} \{s\}\times e_{s}^{-1}(B)\ .$$
 Now note that $e_{s}^{-1}(B)\subseteq e_{s_{0}}^{-1}(GB)$ for every $s$ in $S$, and this subset is bounded in $X^{(S)}$ by definition of the bornology. It is at this point where we employ the definition of the bornology of $X^{(S)}$ as induced by the $G$-completion of $X$.
Hence we have
$$e_{S,X}^{-1}(B)\subseteq S\times e_{s_{0}}^{-1}(GB)\ .$$
Consequently, 
$e_{S,X}^{-1}(B)$ is bounded since $S$ is bounded in $S_{min,max}$.
 \end{proof}

For {a} fixed $S$ in $G\Orb$ the family of  maps  $(e_{S,X})_{X\in G\BC}$ is a transformation of functors
$$e_{S}:S_{min,max}\otimes (-)^{(S)} \to \id_{G\BC}:G\BC\to G\BC\ ,$$
{and it} extends to motives   $$e_{S}:S_{min,max}\otimes (-)^{(S)} \to \id_{G\Spc\cX}:G\Spc\cX\to G\Spc\cX\ .$$

Let $X$ be in $G\Spc\cX$ and note that $X^{(S)}\simeq Y(X)(S)$ by definition Definition \ref{rgiorgefewfwefwef} of $Y$.
 The collection of morphisms $(e_{S})_{S\in G\Orb}$  {induces,} by the universal property of the  coend, 
a  transformation
$$e^{G}\colon\int^{S\in G\Orb} S_{min,max}\otimes Y(-)(S) \to \id_{G\Spc\cX}(-)$$
of endofunctors of $G\Spc\cX$.
Since the functors $E$ and $Y$ preserve  colimits we get an induced transformation  \begin{equation}\label{f24oif}
\beta_{E}:=E(e^{G}) \colon E^{Bredon}\to E
\end{equation}
which will be called the comparison map.
{We} study the following problem. 
 
\begin{prob}
Under which conditions on $X$ or $E$  {is} the  morphism $\beta_{E,X} \colon E^{Bredon}(X)\to E(X)$
 an equivalence?
\end{prob}
This is already interesting in the universal case where $E$ is the stabilization map $G\Spc\cX\to G\Sp\cX$.

 Let $\bC$ be a cocomplete stable $\infty$-category and $E\colon G\BC\to \bC$  be an  equivariant coarse homology theory.
Let $X$ be a $G$-bornological coarse space and $W$ be a set. Then the family
$(i_{w})_{w\in W}$ of inclusions
$i_{w}\colon X\to W_{min,max}\otimes X$, $ x\mapsto (w,x)$,  induces a morphism  \begin{equation}\label{f2oi23f23f}
i_{W}\colon W\otimes E(X)\to E(W_{min,max}\otimes X)\ .
\end{equation}
 Note that $\otimes$ on the left-hand side is the tensor structure of $\bC$ over spaces (and hence sets), and $\otimes$ on the right-hand side is the tensor product in $G\BC$.

\begin{ddd}\label{egfewgewwfwefewf}
We call $E$ hyperexcisive  if for every set $W$ and every  bounded 
 $X$ in $G\BC$   the morphism \eqref{f2oi23f23f}
 is an equivalence.
 \end{ddd}
 
 \begin{rem}
 If $W$ is finite, then excisiveness of $E$ implies that   \eqref{f2oi23f23f}   is an equivalence without any assumption on $X$.    So hyperexcisiveness
 is a strengthening of excisiveness for certain infinite disjoint decompositions. \hrB
  \end{rem}
  
  \begin{ddd}
We let $G\Sp\cX_{bd}$ denote the  full {stable} subcategory of $G\Sp\cX$ generated under colimits by the objects $\Yo^{s}_{G}(X)$ for bounded $G$-bornological coarse spaces $X$.
\end{ddd}
  
 Since the functor $W\otimes -\colon\bC\to \bC$ {preserves} colimits and $E$ can be considered as an object of $\Fun^{\colim}(G\Spc\cX,\bC)$ 
 we get:
 \begin{kor}\label{rgvioerggegegerf}
 $E$ is hyperexcisive if and only of   for every set $W$ and $X$ in $G\Sp\cX_{bd}$ the natural morphism
 $W\otimes E(X)\to E(W_{min,max}\otimes X)$ is an equivalence.
 \end{kor}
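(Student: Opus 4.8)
The plan is to reinterpret the comparison $i_{W}$ of \eqref{f2oi23f23f} as a natural transformation between two colimit-preserving functors $G\Sp\cX\to\bC$ of the variable $X$, and then use that $G\Sp\cX_{bd}$ is generated under colimits by the motives of bounded $G$-bornological coarse spaces, on which $i_{W}$ is an equivalence by assumption.

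First I would check that $X\mapsto W\otimes E(X)$ is colimit-preserving on $G\Sp\cX$. The functor $E$, being an equivariant coarse homology theory, factors through $\Yo_{G}$ and defines an object of $\Fun^{\colim}(G\Spc\cX,\bC)$, hence of $\Fun^{\colim}(G\Sp\cX,\bC)$ via the equivalence \eqref{ewflkwenfjkewneiwjffefff21} and stability of $\bC$. Post-composing with $W\otimes-\colon\bC\to\bC$, which preserves colimits since it is the tensoring of the cocomplete $\bC$ over spaces (and hence sets), yields a colimit-preserving functor $G\Sp\cX\to\bC$.

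Next I would treat $X\mapsto E(W_{min,max}\otimes X)$. By the Lemma preceding Definition \ref{etgioegggergergergeg} (applied to $S:=W$ with trivial action), the functor $W_{min,max}\otimes-\colon G\BC\to G\Spc\cX$ satisfies coarse invariance, excision, vanishing on flasques and $u$-continuity, hence factors essentially uniquely through $\Yo_{G}$ as a colimit-preserving endofunctor of $G\Spc\cX$. Composing with $E$ gives a colimit-preserving functor $G\Spc\cX\to\bC$ with stable target, which therefore extends uniquely to a colimit-preserving functor $G\Sp\cX\to\bC$; the same extension procedure applies to the functor of the previous paragraph, so that $i_{W}$, which is natural in $X$ and compatible with passing to motives and with stabilization, becomes a natural transformation between the two colimit-preserving functors $G\Sp\cX\to\bC$.

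Finally, hyperexcisiveness (Definition \ref{egfewgewwfwefewf}) says precisely that this natural transformation is an equivalence when evaluated on $\Yo^{s}_{G}(X)$ for every bounded $X$ in $G\BC$. Since the full subcategory of objects of $G\Sp\cX$ on which a natural transformation between colimit-preserving functors is an equivalence is closed under colimits, and $G\Sp\cX_{bd}$ is by definition generated under colimits by the objects $\Yo^{s}_{G}(X)$ with $X$ bounded, the transformation is an equivalence on all of $G\Sp\cX_{bd}$, which is the claim. The only point demanding attention — rather than genuine difficulty — is the bookkeeping that identifies the levelwise-defined $i_{W}$ with a transformation between the colimit-preserving extensions, compatibly with stabilization; everything else is the standard ``agree on generators, both sides cocontinuous'' argument.
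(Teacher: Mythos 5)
Your proposal is correct and follows essentially the same argument the paper intends: the paper remarks just before the statement that $W\otimes-$ preserves colimits and that $E$ lies in $\Fun^{\colim}(G\Spc\cX,\bC)$, and then states the corollary, which is exactly your ``both sides are cocontinuous and agree on the generators $\Yo^{s}_{G}(X)$ of $G\Sp\cX_{bd}$'' argument. You merely spell out the intermediate bookkeeping (that $W_{min,max}\otimes-$ descends to a colimit-preserving endofunctor of $G\Spc\cX$ via the unnamed lemma preceding Definition \ref{etgioegggergergergeg}, and that $i_{W}$ extends to a transformation of cocontinuous functors on $G\Sp\cX$), which the paper leaves implicit.
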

 Note that in the statement above we use the equivalence \eqref{ewflkwenfjkewneiwjffefff21} in order to evaluate $E$ on a equivariant motivic coarse  spectrum in $G\Sp\cX$.

 Here is our typical example of a hyperexcisive equivariant coarse homology theory.
 Assume that $$F\colon G\BC\to \bC$$ is an equivariant coarse homology theory and form the twist
 $$E:=F_{G_{can,min}} \ , \quad E(X):=F(X\otimes G_{can,min})\ ,$$  {where $G_{can,min}$ is the canonical bornological coarse space associated to the group~$G$ -- see~\cite[Ex.~2.4]{equicoarse}}.  
 {Recall the definition  \cite[Def.~5.15]{equicoarse} of the property of continuity of a functor
 $ G\BC\to \bC$.}
 \begin{lem} \label{fewfpowef23r2} If $F$ is continuous, then
 $E $ is hyperexcisive.
  \end{lem}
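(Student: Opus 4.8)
The plan is to unwind the definitions of $E^{\Bredon}$, the comparison map $\beta_{E,X}$ of \eqref{f24oif}, and hyperexcisiveness, and to reduce the claim that $E=F_{G_{can,min}}$ is hyperexcisive to the continuity hypothesis on $F$. Fix a set $W$ and a bounded $G$-bornological coarse space $X$. We must show that the morphism
$$
i_W\colon W\otimes E(X)\to E(W_{min,max}\otimes X)
$$
of \eqref{f2oi23f23f} is an equivalence. By definition $E(Y)=F(Y\otimes G_{can,min})$, so unfolding both sides, $i_W$ is identified with the canonical map
$$
W\otimes F(X\otimes G_{can,min})\to F\bigl((W_{min,max}\otimes X)\otimes G_{can,min}\bigr)\simeq F\bigl(W_{min,max}\otimes (X\otimes G_{can,min})\bigr)
$$
induced by the family of inclusions $i_w\colon X\otimes G_{can,min}\to W_{min,max}\otimes(X\otimes G_{can,min})$ for $w\in W$.

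The key point is to recognize the right-hand side as a colimit over the poset of finite subsets of $W$ and to move this colimit out of $F$ using continuity. First I would observe that $W_{min,max}\otimes(X\otimes G_{can,min})=\coprod_{w\in W}(X\otimes G_{can,min})$ as a $G$-bornological coarse space (the $G$-action being diagonal, but $W$ carrying the trivial action only if $W$ has trivial $G$-action — here $W$ is just a set, so this is literally a disjoint union indexed by $W$), and that this equals the coarse-coarse (not disjoint-union-in-$\BC$, but the "big" union along bounded pieces) colimit $\colim_{W'\subseteq W\text{ finite}} (W'_{min,max}\otimes(X\otimes G_{can,min}))$ in the appropriate sense. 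The precise statement I would invoke is that continuity of $F$ \cite[Definition 5.15]{equicoarse} says exactly that $F$ commutes with the colimit over the big families coming from such increasing unions of bounded subspaces; since $X$ is bounded, each $X\otimes G_{can,min}$ is "bounded relative to $G_{can,min}$" in the relevant sense, and the disjoint union over a finite $W'$ is handled by ordinary excisiveness of $F$ (which gives $W'\otimes F(-)\xrightarrow{\simeq}F(W'_{min,max}\otimes -)$ for finite $W'$, as recorded in the Remark following Definition \ref{egfewgewwfwefewf}).

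So the argument assembles as: (1) reduce to showing $W\otimes F(X\otimes G_{can,min})\to F(W_{min,max}\otimes(X\otimes G_{can,min}))$ is an equivalence; (2) write $W=\colim_{W'} W'$ over finite subsets, so $W\otimes F(X\otimes G_{can,min})\simeq\colim_{W'} W'\otimes F(X\otimes G_{can,min})$ since $-\otimes C$ preserves colimits; (3) by finite excisiveness, $W'\otimes F(X\otimes G_{can,min})\simeq F(W'_{min,max}\otimes(X\otimes G_{can,min}))$ compatibly in $W'$; (4) identify $\colim_{W'} F(W'_{min,max}\otimes(X\otimes G_{can,min}))$ with $F(W_{min,max}\otimes(X\otimes G_{can,min}))$ via continuity of $F$, checking that the $W'_{min,max}\otimes(X\otimes G_{can,min})$ form exactly the kind of big family over which a continuous functor commutes with the colimit, and that the structure maps agree with those of $i_W$ under these identifications.

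The main obstacle I expect is step (4): matching the combinatorial colimit over finite subsets of $W$ with the precise formulation of continuity in \cite[Definition 5.15]{equicoarse}, which is phrased in terms of the colimit over all "locally finite" or big subfamilies of entourages/bounded sets rather than literally over finite subsets of an indexing set. One has to check that $W_{min,max}\otimes(X\otimes G_{can,min})$, as a $G$-bornological coarse space, has its coarse and bornological structure generated by the subspaces indexed by finite $W'$ in exactly the way continuity requires — this uses that the coarse structure on $W_{min,max}$ is minimal (so no entourages connect distinct points of $W$) and that the bornology on $W_{min,max}\otimes(X\otimes G_{can,min})$ is such that bounded sets meet only finitely many $W'$-components (which holds because $X$, hence $X\otimes G_{can,min}$ up to the $G_{can,min}$-twist, and the minimal coarse structure on $W$, force bounded subsets to sit inside finitely many slices). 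Once this bookkeeping is done the equivalence is formal. I would also take care to note that $\otimes$ on the left of \eqref{f2oi23f23f} is the tensor over spaces and on the right the monoidal product in $G\BC$, and that the comparison is natural so the verification can be done after the fully faithful stabilization, allowing one to work in $\bC$ directly.
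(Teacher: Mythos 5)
Your overall strategy is the same as the paper's: rewrite $E(W_{min,max}\otimes X)=F(W_{min,max}\otimes X\otimes G_{can,min})$, express this as a colimit of $F(R_{min,max}\otimes X\otimes G_{can,min})$ over finite subsets $R\subseteq W$ by continuity, and then use ordinary excision on each finite piece to pull $R$ outside. The structure of steps (1)–(4) matches the published proof.

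However, the justification you offer for the crucial step (4) is wrong, and fixing it is where the actual content of the lemma lies. You claim that the bornology on $W_{min,max}\otimes(X\otimes G_{can,min})$ forces bounded subsets to sit inside finitely many $W$-slices. This is false: the bornology on $W_{min,max}$ is maximal, so $W\times X\times\{e\}$ is a bounded subset of $W_{min,max}\otimes X\otimes G_{can,min}$ (using that $X$ is bounded and $\{e\}$ is bounded in $G_{can,min}$) and it meets every $W$-slice. What continuity actually requires, per \cite[Definition 5.15]{equicoarse}, is a trapping exhaustion: every \emph{$G$-invariant locally finite} subset $L$ must be eventually contained in one of the $R\times X\times G$. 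The correct argument is that any such $L$ satisfies $L=G\bigl(L\cap(W\times X\times\{e\})\bigr)$ by $G$-invariance (here it matters that $G$ acts trivially on $W$); since $W\times X\times\{e\}$ is bounded and $L$ is locally finite, the intersection $L\cap(W\times X\times\{e\})$ is finite; hence the projection of $L$ to $W$ is a finite set $R$, and $L\subseteq R\times X\times G$. It is this use of $G$-invariance together with boundedness of $X$ — not a claim about bounded subsets — that makes $(R\times X\times G)_{R\in F(W)}$ a trapping exhaustion and licenses the appeal to continuity.
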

\begin{proof} Let $X$ be  a bounded $G$-bornological coarse space  and 
assume that $L$ is an invariant locally finite subset of $W_{min,max}\otimes X\otimes G_{can,min}$.  We claim that the image of the projection $L\to W$ is finite. 
We have an equality  
$$L=G (L\cap (W\times X\times \{e\}))\ .$$
Hence the  image of $L\to W$ is the same as the image of $L\cap  (W\times X\times \{e\})\to W$.
Since $W\times X\times \{e\}$ is a bounded subset of $W_{min,max}\times X\times G_{can,min}$ this intersection is finite, and so is its image in $W$.

Let $\cFin(W)$ be the poset of finite subsets of $W$. Then, {by the previous paragraph, the family}
$(R\times X\times G)_{R\in \cFin(W)}$ is a trapping exhaustion (\cite[Def.~5.6]{equicoarse}) of $W_{min,max}\otimes X\otimes G_{can,min}$.
We therefore have an equivalence
\begin{eqnarray*}
E(W_{min,max}\otimes X)&\simeq& F(W_{min,max}\otimes X\otimes G_{can,min})\\&\stackrel{!!}{\simeq}&
  \colim_{R\in \cFin(W)}F(R_{min,max}\otimes X\otimes G_{can,min})\\& 
\stackrel{!}{\simeq} &\colim_{R\in \cFin(W)}R\otimes F( X\otimes G_{can,min})\\&\simeq&
W\otimes E(X)\ ,
\end{eqnarray*}
where we use excision at the marked equivalence and continuity of $F$ at $!!$.
\end{proof}

We can generalize the argument for Lemma \ref{fewfpowef23r2} to the case where  the bornological coarse space $W_{min,max}$ for a set $W$ is replaced  by an arbitrary bounded bornological coarse space~$Z$. 
Let $\pi_{0}(Z)$ {denote} the set of coarse components of $Z$ (see \cite[Def.~{2.30}]{buen}). 
 Recall that  $E=F_{G_{can,min}}$.
\begin{lem}
Assume:
\begin{enumerate}
\item $F$ is continuous. 
\item $X$ belongs to $G\Sp\cX_{bd}$.
\item  $Z$ is a bounded  bornological coarse space. 
\end{enumerate}
Then we have a natural equivalence 
\begin{equation}\label{fv4oijioejioccowicjwecwcweccwecwecwec}
E(Z\otimes X)\simeq \pi_{0}(Z)\otimes E(X)\ .
\end{equation}
 \end{lem}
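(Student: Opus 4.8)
The plan is to reduce the statement to Lemma \ref{fewfpowef23r2} by comparing a bounded bornological coarse space $Z$ with the bornological coarse space $\pi_{0}(Z)_{min,max}$ associated to its (finite, since $Z$ is bounded) set of coarse components. First I would recall that since $Z$ is bounded, it has only finitely many coarse components, so $\pi_{0}(Z)$ is a finite set; in particular $\pi_{0}(Z)_{min,max}\otimes X$ is a finite coproduct of copies of $X$, and by excisiveness of $E$ (applied finitely many times) the natural map $\pi_{0}(Z)\otimes E(X)\to E(\pi_{0}(Z)_{min,max}\otimes X)$ is an equivalence. So it suffices to produce a natural equivalence $E(Z\otimes X)\simeq E(\pi_{0}(Z)_{min,max}\otimes X)$ for $X$ in $G\Sp_{bd}$.

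Next I would construct the comparison. There is a canonical map $Z\to \pi_{0}(Z)_{min,max}$ of bornological coarse spaces sending a point to its coarse component — this is a morphism because the minimal coarse structure on $\pi_{0}(Z)$ makes every map to it coarse, and it is bornological since $Z$ is bounded so every subset maps to a bounded (indeed arbitrary, via the maximal bornology) subset of $\pi_{0}(Z)$. Tensoring with $X$ and applying $E$ gives the map we want to show is an equivalence. The heart of the argument is then exactly the argument of the proof of Lemma \ref{fewfpowef23r2} with $W_{min,max}$ replaced by $Z$: writing $E = F_{G_{can,min}}$, for an invariant locally finite subset $L$ of $Z\otimes X\otimes G_{can,min}$ one shows, using $L = G(L\cap (Z\times X\times\{e\}))$ and boundedness of $Z\times X\times\{e\}$, that $L$ is in fact contained in $B\otimes X\otimes G_{can,min}$ for a bounded (hence contained in finitely many coarse components) subset $B$ of $Z$; combined with continuity of $F$ and Corollary \ref{rgvioerggegegerf} for $X$ in $G\Sp_{bd}$, this yields the equivalence. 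I would organize this via a trapping exhaustion of $Z\otimes X\otimes G_{can,min}$ built from an exhaustion of $Z$, so that continuity of $F$ expresses $E(Z\otimes X)$ as a colimit over finite pieces, each of which by excision splits off copies of $E(X)$ indexed by coarse components.

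The main obstacle I expect is the bookkeeping around coarse components versus bounded subsets: one must check carefully that a bounded subset of $Z$ meets only finitely many coarse components and that the trapping exhaustion/continuity argument is compatible with the splitting $Z \simeq \coprod$ over $\pi_{0}(Z)$ at the level of motives — in other words, that passing to coarse components is "seen" by $E$ precisely because of hyperexcisiveness of $E$ on bounded inputs, which is where the hypothesis $X\in G\Sp_{bd}$ and Corollary \ref{rgvioerggegegerf} are essential. Once that is in place the equivalence \eqref{fv4oijioejioccowicjwecwcweccwecwecwec} follows formally, and naturality in $Z$ and $X$ is inherited from the naturality of all the maps involved (the canonical projection $Z\to\pi_{0}(Z)_{min,max}$, the trapping exhaustion, and the comparison maps of Corollary \ref{rgvioerggegegerf}).
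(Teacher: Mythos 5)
Your overall plan --- build a trapping exhaustion of $Z\otimes X\otimes G_{can,min}$ from finite pieces of $Z$, use continuity of $F$ and excision, compare with $\pi_{0}(Z)_{min,max}$, and invoke Corollary \ref{rgvioerggegegerf} to pass to general $X$ in $G\Sp\cX_{bd}$ --- is indeed the route the paper has in mind when it says the lemma follows from ``the argument of the proof of Lemma \ref{fewfpowef23r2} together with Corollary \ref{rgvioerggegegerf}.'' However, there is a concrete mathematical error that runs through your write-up and would break it if executed literally.

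You assert twice that boundedness of $Z$ forces finiteness at the level of coarse components: first ``since $Z$ is bounded, it has only finitely many coarse components, so $\pi_{0}(Z)$ is a finite set,'' and later ``one must check carefully that a bounded subset of $Z$ meets only finitely many coarse components.'' Both claims are false. A bounded bornological coarse space is precisely one with the maximal bornology; this puts no constraint on the coarse structure. The space $W_{min,max}$ for an \emph{infinite} set $W$ is bounded yet has $\pi_{0}(W_{min,max})=W$ infinite --- and this is exactly the case that Lemma \ref{fewfpowef23r2}, hyperexcisiveness (Definition \ref{egfewgewwfwefewf}), and Corollary \ref{rgvioerggegegerf} exist to handle. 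If $\pi_{0}(Z)$ were always finite you would not need any of this machinery, since finite excision would already give the result. As a consequence your reduction ``by excisiveness of $E$ (applied finitely many times)'' in the first paragraph is unjustified; you must use Corollary \ref{rgvioerggegegerf} (i.e.\ hyperexcisiveness) there, not ordinary finite excision.

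The finiteness that actually drives the argument is different: for an invariant \emph{locally finite} subset $L$ of $Z\otimes X\otimes G_{can,min}$, the intersection $L\cap(Z\times X\times\{e\})$ is finite (local finiteness against the bounded set $Z\times X\times\{e\}$), and since $G$ acts trivially on the $Z$-factor, the image of $L$ in $Z$ equals the image of this finite set, hence is a \emph{finite} subset $R$ of $Z$. So the correct trapping exhaustion is indexed by finite subsets $R$ of $Z$, not by ``bounded'' ones. Each such $R$ (with the induced bornological coarse structure) is a finite bounded coarse space, hence coarsely equivalent to $\pi_{0}(R)_{min,max}$ with $\pi_{0}(R)$ finite; excision then gives $F(R\otimes X\otimes G_{can,min})\simeq\pi_{0}(R)\otimes E(X)$, and continuity lets you pass to the colimit over $R$, giving $\pi_{0}(Z)\otimes E(X)$ since $\pi_{0}(Z)=\colim_{R}\pi_{0}(R)$. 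So the shape of your argument is salvageable, but the slogan ``bounded $\Rightarrow$ finitely many components'' has to be replaced everywhere by ``locally finite $\Rightarrow$ finite image in $Z$.''

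One minor further point: your justification that $Z\to\pi_{0}(Z)_{min,max}$ is controlled (``the minimal coarse structure makes every map to it coarse'') is backwards; with the minimal coarse structure on the target, a map is controlled precisely when it is constant on coarse components, which the component map is by definition. The conclusion stands but the stated reason does not.
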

 \begin{proof}
 The argument is similar as for   Lemma \ref{fewfpowef23r2}. In addition, 
 we use that if $R$ is in $\cFin(Z)$ with the induced structures, then we have a coarse equivalence between $R$ and the subspace of $\pi_{0}(Z)_{min,max}$ of components intersecting $R$. This gives the marked equivalence in 
 \begin{eqnarray*}
E(Z\otimes X)&\simeq& F( Z\otimes X\otimes G_{can,min})\\&\stackrel{}{\simeq}&
  \colim_{R\in \cFin(Z)}F(R \otimes X\otimes G_{can,min})\\& 
  \stackrel{!}{\simeq} &\colim_{R\in \cFin(\pi_{0}(Z))}  F( R_{min,max}\otimes X\otimes  G_{can,min})\\&\stackrel{}{\simeq} &\colim_{R\in \cFin(\pi_{0}(Z))}R\otimes F( X\otimes G_{can,min})\\&\simeq&
\pi_{0}(Z)\otimes E(X)\ .   
\end{eqnarray*}

 \end{proof}

 Let  $\bC$ be a cocomplete stable $\infty$-category and $E$ be in $\Fun^{\colim}(G\Spc\cX,\bC)$.     Then $E^{\Bredon}$ is again in $\Fun^{\colim}(G\Spc\cX,\bC)$.    We  use  {the equivalence} \eqref{ewflkwenfjkewneiwjffefff21} in order to evaluate $E$ and $E^{\Bredon}$ on objects of $G\Sp\cX$.
Consider objects $T$  in $G\Orb$ and $X$  in $\Sp\cX$.
\begin{lem}\label{wefiweofewfewfwef} Assume 
 \textbf{one} of:
\begin{enumerate}
\item  $G$ is finite 
\item  $E$ is hyperexcisive   and $X\in \Sp\cX_{bd}$.
\end{enumerate} 
Then  the comparison map  $$\beta_{E,T_{min,max}\otimes X} \colon E^{\Bredon}(T_{min,max}\otimes X)\stackrel{\simeq}{\to} E(T_{min,max}\otimes X)\ $$
 {is an equivalence.}
\end{lem}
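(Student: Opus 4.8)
The plan is to compute $E^{\Bredon}(T_{min,max}\otimes X)$ directly from the coend defining $\underline{E}^{G}$ and the description of the motivic orbit functor $Y$, and to identify it with $E(T_{min,max}\otimes X)$ by a cofinality/co-Yoneda argument together with a commutation of $E$ with the relevant colimit. First I would unwind definitions: by Definition \ref{rgfrihgioffwfwfwefewf}, $E^{\Bredon}(T_{min,max}\otimes X)=\underline{E}^{G}(Y(T_{min,max}\otimes X))$, and by Definition \ref{fuifehwieufhf23f2323rr233r2r3} this is the coend $\int^{S\in G\Orb}\underline{E}(S)(Y(T_{min,max}\otimes X)(S))=\int^{S\in G\Orb}E\big(S_{min,max}\otimes (T_{min,max}\otimes X)^{(S)}\big)$, using $Y(Z)(S)\simeq Z^{(S)}$. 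The key point is that the comparison map $\beta$ is, by construction \eqref{f24oif}, induced by the evaluation morphisms $e_{S}\colon S_{min,max}\otimes (T_{min,max}\otimes X)^{(S)}\to T_{min,max}\otimes X$, so I must show $E$ sends the colimit $\colim_{\Tw(G\Orb)^{op}}$ of the $e_{S}$'s to an equivalence onto $E(T_{min,max}\otimes X)$.

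The heart of the argument is a set-level identification. For $S$ in $G\Orb$ I would compute $(T_{min,max}\otimes X)^{(S)}=\Hom_{G\Set}(S,T\times X)\cong \Hom_{G\Set}(S,T)\times_{?}\cdots$; more usefully, since $T$ is a transitive $G$-set, $\Hom_{G\Set}(S,T)$ is either empty or a torsor-type set, and the coend over $G\Orb$ against the corepresentable-type functor $S\mapsto\Hom_{G\Set}(S,T)$ collapses by the co-Yoneda formula to evaluation near the object $T$ (or near $G$, as in the $\underline{F}^G$ and $\underline{\underline{F}}^G$ examples computed earlier in the excerpt). Concretely I expect $\int^{S\in G\Orb}S_{min,max}\otimes(T\times X)^{(S)}\simeq T_{min,max}\otimes X$ already at the level of $G\Spc\cX$, i.e. $e^{G}$ is an equivalence before applying $E$, at least in case (2) after passing to $G\Sp\cX_{bd}$; then applying $E$ finishes. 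For case (1), with $G$ finite, $G\Orb$ has finitely many objects each with finite automorphism groups, the relevant subsets $(T\times X)^{(S)}$ are bounded $G$-sets tensored into $X$, and finite excision suffices to push $E$ through the finite coend; here I would invoke ordinary excisiveness of $E$ (no hyperexcisiveness needed) together with the fact that the twisted arrow category of $G\Orb$ is a finite category, so the coend is a finite colimit that $E$ respects on the nose.

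I would organize the write-up as: (i) unwind $E^{\Bredon}(T_{min,max}\otimes X)$ into the coend of $E(S_{min,max}\otimes(T_{min,max}\otimes X)^{(S)})$ and identify $\beta$ with $E$ applied to $e^{G}$; (ii) prove the purely combinatorial lemma that $e^{G}\colon\int^{S}S_{min,max}\otimes(T_{min,max}\otimes X)^{(S)}\to T_{min,max}\otimes X$ is an equivalence in $G\Spc\cX$ (resp. after stabilizing, in $G\Sp\cX$) — using the co-Yoneda formula as in the earlier examples, noting $(T_{min,max}\otimes X)^{(S)}\cong \coprod_{\phi\in\Hom_{G\Set}(S,T)}X^{(S)}$ appropriately with the bornological coarse structures matching; (iii) in case (1), observe $\Tw(G\Orb)$ is finite so the coend is a finite colimit and conclude by excision/colimit-preservation of $E$; in case (2), use Corollary \ref{rgvioerggegegerf} and the fact that $X\in\Sp\cX_{bd}$ forces the relevant tensor factors to be bounded, so hyperexcisiveness lets $E$ commute with the (possibly infinite) coend. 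The main obstacle I anticipate is step (ii): getting the bornological coarse structures to match — one must check that the coend in $G\BC$-motives of the $S_{min,max}\otimes X^{(S)}$ really reassembles to $T_{min,max}\otimes X$ and not merely to something abstractly equivalent as a $G$-set, which is exactly where the definition of the bornology on $X^{(S)}$ via the $G$-completion $B_{G}X$ (used in Lemma \ref{geioogergregregregrege}) is essential, and where the two hypotheses (finiteness of $G$, or hyperexcisiveness plus boundedness) are needed to control the colimit.
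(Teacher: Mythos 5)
Your plan is close to the paper's argument, and for assumption (1) it is essentially the same: for finite $G$ the identification $(T_{min,max}\otimes X)^{(R)}\cong \Hom_{G\Orb}(R,T)_{min,max}\otimes X$ together with the \emph{finite} coproduct decomposition and co-Yoneda does collapse the coend, and this can indeed be done before or after applying $E$ since $E$ preserves all colimits (the finiteness of $\Tw(G\Orb)$ is not really needed; what matters is that $\Hom_{G\Orb}(R,T)$ is a finite set, so excision applies).

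The problem is in your step (ii) for assumption (2). You assert that $e^{G}\colon\int^{S}S_{min,max}\otimes(T_{min,max}\otimes X)^{(S)}\to T_{min,max}\otimes X$ is already an equivalence in $G\Sp\cX$ ``after passing to $G\Sp\cX_{bd}$,'' but this is not true and cannot be, for two reasons. First, if it were an equivalence at the motivic level then $\beta_{E,T_{min,max}\otimes X}$ would be an equivalence for \emph{every} colimit-preserving $E$, and the hyperexcisiveness hypothesis would be superfluous. Second, and concretely: for infinite $G$ the set $\Hom_{G\Orb}(R,T)$ can be infinite, and $\Hom_{G\Orb}(R,T)_{min,max}\otimes Z$ is \emph{not} the coproduct $\coprod_{\Hom_{G\Orb}(R,T)}Z$ in $G\BC$ (the bornologies differ: $(-)_{min,max}$ gives everything bounded, whereas the coproduct has only locally finite bounded sets). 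So $\Yo_{G}$ does not split it as a tensor over the set $\Hom_{G\Orb}(R,T)$, and co-Yoneda does not collapse the coend at the level of motives. The correct order of operations, which is what the paper does, is to apply $E$ to the integrand first, then invoke Corollary \ref{rgvioerggegegerf} (hyperexcisiveness plus $S_{min,max}\otimes X\in G\Sp\cX_{bd}$) to rewrite $E(\Hom_{G\Orb}(R,T)_{min,max}\otimes S_{min,max}\otimes X)\simeq \Hom_{G\Orb}(R,T)\otimes E(S_{min,max}\otimes X)$ in $\bC$, and only then run co-Yoneda on the resulting coend in $\bC$. You do cite the right corollary in your step (iii), but you phrase its role as ``letting $E$ commute with the coend''; that is not the issue ($E$ commutes with all colimits by hypothesis). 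Its actual role is to pull the infinite set out as a tensor \emph{inside} $\bC$, which is the step that fails at the motivic level.
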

\begin{proof} 
If $G$ is finite, then
for   $S,R$ in $G\Orb$ by excision (used at $!$) we have  the equivalence
\begin{eqnarray*} E(S_{min,max}\otimes Y(T_{min,max}\otimes X)(R))&\simeq &E(S_{min,max}\otimes \Hom_{G\Orb}(R,T)_{min,max}\otimes X)\\&\stackrel{!}{\simeq}&
 \Hom_{G\Orb}(R,T)\otimes E(S_{min,max}\otimes X)\ .
\end{eqnarray*} 
 
Alternatively we  can obtain the same equivalence $!$ if we  assume that $E$ is hyperexcisive and $X$ is bounded since then also $S_{min,max}\otimes X$ belongs to $G\Sp\cX_{bd}$. 
%
We now use the {Yoneda} formula    in order to get 
 \begin{eqnarray*}
  E^{\Bredon}(T_{min,max}\otimes X)&\simeq&
  \int^{G\Orb } E((-)_{min,max}\otimes Y(T_{min,max}\otimes X)(-))
  \\&\simeq& \int^{G\Orb }   \Hom_{G\Orb}(-,T)\otimes E((-)_{min,max}\otimes X)\\&  \simeq & E( T_{min,max}\otimes X ) \ . \qedhere\end{eqnarray*}

\end{proof}

We consider a  cocomplete  {stable} $\infty$-category $\bC$ and a functor  $E\colon G\Orb\to \Fun^{\colim}(\Spc\cX,\bC)$.   Recall Definition \ref{etgioegggergergergeg}.

\begin{lem}\label{rgio34oergergergregeg} \mbox{}
{\begin{enumerate}
\item  If $G$ is finite, then  we have an equivalence \begin{equation}\label{rvoijoirff2efwef}
\underline{E^{G}\circ Y} \simeq E \ .
\end{equation}
\item \label{wuhfifefwfcqwefqwec}  {If}  $E$ takes values
in continuous  $\bC$-valued coarse homology theories, then  we have an equivalence \begin{equation}\label{rvoijoirff2efwef111}
\underline{E^{G}\circ Y}_{|\Spc\cX_{bd}} \simeq E_{|\Spc\cX_{bd}}\ .   
\end{equation}
\end{enumerate} }

\end{lem}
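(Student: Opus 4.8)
The plan is to prove both parts by one computation that rewrites $\underline{E^{G}\circ Y}(T)(X)$ as a coend over $G\Orb$ and then collapses it by the co-Yoneda (density) formula, exactly as in the proof of Lemma~\ref{wefiweofewfewfwef}. Both sides of each asserted equivalence are colimit-preserving in $X$ — on $\Spc\cX$ in case~(1), on $\Spc\cX_{bd}$ in case~(2) — and $\Spc\cX_{bd}$ is closed under colimits, so it suffices to produce a natural (in $T$ and $X$) equivalence on the generators $X=\Yo(X_{0})$, with $X_{0}$ an arbitrary bornological coarse space in case~(1) and a bounded one in case~(2). Unwinding Definitions~\ref{etgioegggergergergeg} and~\ref{fuifehwieufhf23f2323rr233r2r3},
$$\underline{E^{G}\circ Y}(T)(X)\;=\;(E^{G}\circ Y)(T_{min,max}\otimes X)\;\simeq\;\int^{S\in G\Orb}E(S)\bigl(Y(T_{min,max}\otimes X)(S)\bigr)\ .$$

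Next I would identify the integrand. Writing out the motivic orbit functor $Y$ through $\hat Y$ and the description~\eqref{v4toihfkj3rof3rf3rfref} of $X^{(S)}$ — as is done (implicitly) in the proof of Lemma~\ref{wefiweofewfewfwef}, and using that the $G$-completion enters the definition of $(-)^{(S)}$ (cf.\ Lemma~\ref{geioogergregregregrege}) — one gets, for $X=\Yo(X_{0})$, a natural identification $Y(T_{min,max}\otimes X)(S)\simeq\Hom_{G\Orb}(S,T)_{min,max}\otimes X$, where $\Hom_{G\Orb}(S,T)$ is a set equipped with the minimal coarse and maximal bornological structure. The key local step is then, for each $S$ and with $W:=\Hom_{G\Orb}(S,T)$, the identification $E(S)(W_{min,max}\otimes X)\simeq W\otimes E(S)(X)$ (the $\otimes$ on the right being the tensoring of $\bC$ over spaces). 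In case~(1), $G$ is finite, $W$ is a finite set, $W_{min,max}\otimes X_{0}$ is the coproduct in $\BC$ of $W$-many copies of $X_{0}$, and the identification follows from excision (finite additivity of the coarse homology theory $E(S)$, using its vanishing on $\emptyset$). In case~(2), $W$ may be infinite, but $X_{0}$ is bounded, so $W_{min,max}\otimes X_{0}$ is a bounded bornological coarse space; hence each of its locally finite subsets is finite, so has finite image in $W$, and the family $(V_{min,max}\otimes X_{0})_{V\in F(W)}$, indexed by the poset $F(W)$ of finite subsets of $W$, is a trapping exhaustion of $W_{min,max}\otimes X_{0}$ (\cite[Definition~5.6]{equicoarse}). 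Continuity of $E(S)$ and finite additivity on the pieces then give $E(S)(W_{min,max}\otimes X_{0})\simeq\colim_{V\in F(W)}V\otimes E(S)(X_{0})\simeq W\otimes E(S)(X_{0})$; this is the untwisted, non-equivariant counterpart of the argument in the proof of Lemma~\ref{fewfpowef23r2}.

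Plugging the local step into the coend, the left-hand side becomes $\int^{S\in G\Orb}\Hom_{G\Orb}(S,T)\otimes E(S)(X)$. Since $S\mapsto\Hom_{G\Orb}(S,T)$ is the presheaf on $G\Orb$ represented by $T$ while $S\mapsto E(S)(X)$ is a covariant functor $G\Orb\to\bC$, the co-Yoneda formula for coends — the one already used above in the computation of $\underline{F}^{G}$ — identifies this coend with $E(T)(X)$. All equivalences in the chain are natural in $T$ and in $X$, so they assemble to the equivalence $\underline{E^{G}\circ Y}\simeq E$ in $\Fun(G\Orb,\Fun^{\colim}(\Spc\cX,\bC))$ in case~(1), and to $\underline{E^{G}\circ Y}(T)(X)\simeq E(T)(X)$ on $\Spc\cX_{bd}$ in case~(2).

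The only step carrying content beyond formal coend bookkeeping is, in case~(2), checking that $(V_{min,max}\otimes X_{0})_{V\in F(W)}$ is a trapping exhaustion of $W_{min,max}\otimes X_{0}$ so that continuity of $E(S)$ applies; this is a mild variant of the computation in the proof of Lemma~\ref{fewfpowef23r2}. The identification of the integrand and the co-Yoneda collapse are essentially already contained in the proof of Lemma~\ref{wefiweofewfewfwef}.
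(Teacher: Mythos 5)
Your proof is correct and follows essentially the same route as the paper's: the paper likewise unwinds the definition into a coend $\int^{S}E(S)(\Hom_{G\Orb}(S,T)_{min,max}\otimes X)$, identifies this via (finite) excision or hyperexcisiveness with $\int^{S}y(T)(S)\otimes E(S)(X)$, and collapses by co-Yoneda. The only cosmetic difference is that for part~(2) you re-derive the needed hyperexcisiveness directly from the trapping-exhaustion argument for bounded generators, whereas the paper simply cites Lemma~\ref{fewfpowef23r2} for the trivial group and then Corollary~\ref{rgvioerggegegerf}; the mathematical content is identical.
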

\begin{proof}
Note that by Lemma \ref{fewfpowef23r2}, applied to the case of a trivial group $G$, a continuous coarse homology theory is hyperexcisive.
 For $T$ in $G\Orb$ and $X $ in $\Spc\cX$ we calculate
\begin{eqnarray*}\underline{(E^{G}\circ Y)}(T)(X)&\simeq&
 E^{G}(Y(T_{min,max}\otimes X))\\&\simeq&
\int^{{S\in}G\Orb} E(S) ((T_{min,max}\otimes X)^{(S)})\\&\simeq& \int^{{S\in}G\Orb} E(S)(\Hom_{G\Orb}(S,T)_{min,max}\otimes X)\\&\stackrel{!}{\simeq}& \int^{{S\in}G\Orb} \Hom_{G\Orb}(S,T)\otimes   E(S)(  X)\\&\simeq&     E(T)( X)\ ,
\end{eqnarray*}
where at the marked equivalence we either used that $\Hom_{G\Orb}(S,T)$ is finite for finite $G$ or hyperexcisiveness of $E{(S)}$ and boundedness of $X$.
\end{proof}

 {Let $\bC$  a cocomplete stable $\infty$-category and  $E$ be in $\Fun^{\colim}(G\Spc\cX,\bC)$.}     The following is an immediate consequence of Lemma \ref{rgio34oergergergregeg} and  Definition \ref{rgfrihgioffwfwfwefewf}.
 Recall the comparison map from \eqref{f24oif}.
\begin{kor}\label{fiewofewefewfw}Assume  that $G$ is finite. 
Then the comparison map
$$ \beta_{E^{\Bredon}}:(E^{\Bredon})^{\Bredon} \simeq E^{\Bredon} $$
is an equivalence, i.e., $ E^{\Bredon}$ coincides with its associated
  Bredon-style equivariant coarse homology theory.
\end{kor}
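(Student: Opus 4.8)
We must show that, for $G$ finite, the comparison map $\beta_{E^{\Bredon}}\colon (E^{\Bredon})^{\Bredon}\to E^{\Bredon}$ of \eqref{f24oif} is an equivalence. The plan is to unfold both homology theories via Definition \ref{rgfrihgioffwfwfwefewf} and then reduce to the first part of Lemma \ref{rgio34oergergergregeg}. Write $E$ for the given object of $\Fun^{\colim}(G\Spc\cX,\bC)$. By Definition \ref{rgfrihgioffwfwfwefewf} we have $E^{\Bredon}=\underline{E}^{G}\circ Y$, where $\underline{E}\colon G\Orb\to \Fun^{\colim}(\Spc\cX,\bC)$ is the functor of Definition \ref{etgioegggergergergeg}; in particular $E^{\Bredon}$ is of Bredon-style (Definition \ref{brstyle}), and it lies in $\Fun^{\colim}(G\Spc\cX,\bC)$ by Corollary \ref{gih42iuhjfierfwfef}. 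Applying Definition \ref{rgfrihgioffwfwfwefewf} once more, now to $E^{\Bredon}$, gives $(E^{\Bredon})^{\Bredon}=\underline{E^{\Bredon}}^{G}\circ Y$. So it suffices to identify the functor $\underline{E^{\Bredon}}$ and then to check that the resulting morphism is $\beta_{E^{\Bredon}}$.

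For the identification I would apply Lemma \ref{rgio34oergergergregeg}(1) to the functor $\underline{E}$ in the role of the functor called $E$ there. Since $G$ is finite, its conclusion reads $\underline{\underline{E}^{G}\circ Y}\simeq \underline{E}$, and by Definition \ref{rgfrihgioffwfwfwefewf} the left-hand side is precisely $\underline{E^{\Bredon}}$. Hence $\underline{E^{\Bredon}}\simeq \underline{E}$ as functors $G\Orb\to \Fun^{\colim}(\Spc\cX,\bC)$. Post-composing with the colimit-preserving assignment $F\mapsto F^{G}\circ Y$ (colimit-preserving by Lemma \ref{ewfiwefwefewf}) yields an equivalence
$$(E^{\Bredon})^{\Bredon}=\underline{E^{\Bredon}}^{G}\circ Y\ \simeq\ \underline{E}^{G}\circ Y=E^{\Bredon}\ .$$

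It remains to see that this equivalence is the comparison map $\beta_{E^{\Bredon}}$, rather than merely an abstract equivalence of the two homology theories. I would verify this by tracing the chain of equivalences proving Lemma \ref{rgio34oergergergregeg}(1): the one step that uses finiteness of $G$ is the excision isomorphism $\underline{E}(S)(\Hom_{G\Orb}(S,T)_{min,max}\otimes X)\simeq \Hom_{G\Orb}(S,T)\otimes \underline{E}(S)(X)$, which is induced by the individual inclusion maps $i_{w}$ underlying \eqref{f2oi23f23f}; together with the co-Yoneda formula this displays the equivalence $\underline{E^{\Bredon}}(S)(X)\simeq E(S_{min,max}\otimes X)$ as the one which, assembled under the coend over $S\in G\Orb$, becomes $E^{\Bredon}$ applied to the canonical map $e^{G}$ of \eqref{f24oif} — since $e^{G}$ is built from the evaluation maps $e_{S}\colon S_{min,max}\otimes X^{(S)}\to X$ whose restrictions to the single copies $\{s\}\times X^{(S)}$ are exactly inclusions of the above type. (As a consistency check, Lemma \ref{wefiweofewfewfwef} applied to the equivariant coarse homology theory $E^{\Bredon}$ with $G$ finite already shows $\beta_{E^{\Bredon}}$ is an equivalence on every object of the form $T_{min,max}\otimes X$.) I expect this identification of the abstract equivalence with $\beta_{E^{\Bredon}}$ to be the only real obstacle, and a routine one: all of the homotopy-theoretic content, specific to finite $G$ through the finiteness of the sets $\Hom_{G\Orb}(S,T)$, is already packaged in Lemma \ref{rgio34oergergergregeg}(1).
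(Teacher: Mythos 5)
Your proof is correct and follows essentially the same route as the paper's: unfold both sides via Definition \ref{rgfrihgioffwfwfwefewf}, apply Lemma \ref{rgio34oergergergregeg}(1) (that is, equivalence \eqref{rvoijoirff2efwef}) with $\underline{E}$ in place of $E$ to get $\underline{(\underline{E}^{G}\circ Y)}\simeq \underline{E}$, and post-compose with $(-)^{G}\circ Y$. Your third paragraph, tracing that the abstract equivalence really is $\beta_{E^{\Bredon}}$, is a reasonable piece of extra care which the paper's one-line proof elides (the paper simply asserts ``the comparison map is equivalent to'' before writing the chain), but it does not change the substance of the argument.
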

\begin{proof} The comparison map is  equivalent to
$$(E^{\Bredon})^{\Bredon}\stackrel{Def. \ref{rgfrihgioffwfwfwefewf}}{\simeq} \underline{(\underline{E}^{G}\circ Y)}^{G}\circ Y\stackrel{\eqref{rvoijoirff2efwef}}{\simeq} 
\underline{E}^{G}\circ Y\stackrel{Def. \ref{rgfrihgioffwfwfwefewf}}{\simeq}E^{\Bredon}\ . $$
\end{proof}

\begin{ddd} \label{rgioregggegergergerg}We 
 let $$G\Sp\cX\langle G\Orb\otimes \Sp\cX\rangle$$ be the full stable subcategory of $G\Sp\cX$ generated under colimits by the motives   $S_{min,max}\otimes X$ for $S$ in $G\Orb$ and $X$ in $\Sp\cX$. 
 \end{ddd}

  {Let $\bC$   be  a cocomplete stable $\infty$-category,    let $E$ be in $\Fun^{\colim}(G\Spc\cX,\bC)$,   and consider~$X$ in $G\Sp\cX$.} The following is an immediate consequence of Lemma \ref{wefiweofewfewfwef}. 
\begin{kor}\label{erwfiowfwefewfwf}
Assume: 
\begin{enumerate}
\item  $G$ is finite{.} 
\item  $X$ belongs to $G\Sp\cX\langle G\Orb\otimes \Sp\cX\rangle$.
\end{enumerate} 
Then the comparison map
$$ \beta_{E,X}\colon E^{\Bredon}(X)\to E(X)$$ is an equivalence.
 \end{kor}
\begin{proof}
Here we use that the domain and the codomain of the comparison map preserve colimits and that the  {comparison} map is an equivalence  for the generators of $G\Sp\cX\langle G\Orb\otimes \Sp\cX\rangle$ by
Lemma  \ref{wefiweofewfewfwef}.  
\end{proof}

\begin{ex}\label{fioergwergewrgewrgregw}
Let $G$ be a group. Then for $S$ in $G\Orb$ we have   \begin{equation}\label{owjgopjkerpgferwgwegwergwergeg}
(G_{can,min})^{(S)}\simeq \left\{\begin{array}{cc}
  \emptyset&S\not\cong G\\
  G^{triv}_{can,max}&S\cong G
  \end{array}
\right.\ ,
\end{equation} 
where the superscript $triv$ indicates that $G_{can,max}$ is considered just as a bornological coarse {space} with the trivial $G$-action.
  Note that $G\cong \End_{G\Orb}(G)$ such that
 $g$ in $G$  acts on the orbit $G$ by  $h\mapsto hg^{-1}$.  By functoriality, the group $G^{op}$ acts on ${(}G_{can,min})^{(G)}$. Under the identification above the action of $g$ in $G^{op}$ on $G^{triv}_{can,max}$ is given by $h\mapsto g^{-1}h$.
 
  Note that the end  in Definition \ref{fuifehwieufhf23f2323rr233r2r3} can be expressed as a colimit over the twisted arrow category $ \Tw(G\Orb)^{op}$ of $G\Orb$. The full subcategory of $\Tw(G\Orb)^{op}$ of objects $T\to S$ with $S\cong G$
  is equivalent to $BG$ via the functor which sends the unique object $*$ of $BG$ to
  $G\stackrel{\id_{G}}{\to} G$, and $h$ in $G\cong \End_{BG}(*)$ to the square $$\xymatrix{G\ar[r]^{\id_{G}} \ar@{..>}[d]_{g\mapsto gh}&G \\G \ar[r]^{\id_{G}}&G\ar@{..>}[u]_{g\mapsto gh^{-1}}}$$
  (the dotted arrows are arrows in $G\Orb^{op}$, and the arrow in $\Tw(G\Orb)^{op}$ points downwards.)
 If $E$ is an equivariant coarse homology theory, we  then  get
 \begin{eqnarray*}E^{Bredon} (G_{can,min})&\simeq&  \int^{S\in G\Orb} E(S_{min,max}\otimes Y(G_{can,min})(S))\\&\simeq&
 \colim_{(T\to S)\in\Tw(G\Orb)^{op}} E(T_{min,max}\otimes Y(G_{can,min})(S))\\&\simeq&
 \colim_{BG} E(G_{min,max}\otimes G^{triv}_{can,max})\ ,
 \end{eqnarray*}
 where  the $G$-action (involved in the colimit) on $G_{min,max}\otimes G^{triv}_{can,max}$ is given by $(g,(h,h'))\mapsto (hg^{-1},gh)$.

The identity map of the underlying sets
 $$G_{min,max}\otimes G^{triv}_{can,max}\to G_{min,max}\otimes G^{triv}_{max,max}$$ is a continuous equivalence \cite[Def.~3.20]{equicoarse}, i.e.~it induces an equivalence in every continuous equivariant coarse homology theory.  
 Furthermore, the projection
 $$ G_{min,max}\otimes G^{triv}_{max,max}\to G_{min,max}$$ is a coarse equivalence compatible with the additional $G$-action given by $(g,h)\mapsto hg^{-1}$ on the target.  

  If $E$ is continuous, then  the composition of these two maps  induces the  equivalence     
  $$\colim_{BG} E(G_{min,max}\otimes G^{triv}_{can,max})\stackrel{\simeq}{\to}
  \colim_{BG} E(G_{min,max}) \ .$$
  Hence, assuming that $E$ is continuous, we have an equivalence 
  \begin{equation}\label{wergrwegrewffwerf}
E^{Bredon} (G_{can,min})\simeq \colim_{BG} E(G_{min,max})\ .
\end{equation}
{Under these assumptions, t}he comparison map has the form
  \begin{equation}\label{werjoiejvioervwervwervre}
\beta_{E,G_{can,min}}: \colim_{BG} E(G_{min,max})\to E(G_{can,min})\ .
\end{equation} \heB\end{ex}

  \begin{ex}\label{fkvsdfvfdvsdfvsdvs}
  
   We consider the  equivariant coarse algebraic $K$-theory  $K\C\cX^{G}$ 
 \cite[Def.~8.8]{equicoarse} associated to the  additive category  of finite-dimensional complex vector spaces. By   \cite[Prop.~8.17]{equicoarse}, it is continuous.

By \cite[Def.~8.25]{equicoarse}, we have an equivalence \begin{equation}\label{}
K\C\cX^{G}(G_{can,min})\simeq K(\C[G])\ .
\end{equation}

We now assume that $G$ is finite. Then, using  \cite[Lem.~8.20]{equicoarse}
and the fact that $G_{min,max}=G_{min,min}$    for the second equivalence, we have 
\begin{equation}\label{}
K\C\cX^{G,Bredon}(G_{can,min})\stackrel{\eqref{wergrwegrewffwerf}}{\simeq}
\colim_{BG}K\C\cX^{G}(G_{min,max}) \simeq \colim_{BG} K\C\simeq  K\C\otimes BG
\end{equation}
For the last equivalence we observe by inspection of the calculation  in \cite[Lem.~8.20]{equicoarse}  that the induced $G$-action on $K\C$ is trivial.

We now note that $K\C$ is connective, $\pi_{0}(K\C)\cong \Z$, and hence  $$\pi_{0}(K\C\cX^{G,Bredon}(G_{can,min}))\cong \pi_{0}(K\C\otimes BG)\cong \Z\ .$$
On the other hand,  for a finite group $G$ the group ring $\C[G]$ is Morita equivalent to the ring $\prod_{\hat G} \C$, where $\hat G$ is the set of isomorphism classes of  irreducible complex representations of~$G$. Therefore $ K(\C[G])\simeq \prod_{\hat G} K\C$, and hence $$\pi_{0}(K\C\cX^{G}(G_{can,min}))\cong \pi_{0}(K(\C[G]))\cong   \pi_{0}(\prod_{\hat G} K\C)\cong  \Z^{|\hat G|}\ .$$

By comparing the ranks of these homotopy groups we can conclude that the comparison map
$\beta_{K\C\cX^{G},G_{can,min}}$ is not  an isomorphism provided $G$ is non-trivial.   \heB
 \end{ex}

\begin{kor}\label{wtgijowgwrefwerfrfw} If $G$ is finite and non-trivial, then 
the inclusion $$G\Sp\cX\langle G\Orb\otimes \Sp\cX\rangle\to  G\Sp\cX$$ is proper. In particular,  $\Yo_{G}^{s}(G_{can,min})$ does not belong to the image.
\end{kor}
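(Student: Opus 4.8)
The plan is to argue by contradiction, using $K\C\cX^{G}$ as a ``detector'' and the obstruction already computed in Example~\ref{fkvsdfvfdvsdfvsdvs}. Suppose the motive $\Yo_{G}^{s}(G_{can,min})$ were an object of $G\Sp\cX\langle G\Orb\otimes\Sp\cX\rangle$. Take $\bC:=\Sp$ and let $E:=K\C\cX^{G}$, the equivariant coarse algebraic $K$-theory associated to the additive category of finite-dimensional complex vector spaces; viewed through the universal property of $\Yo_{G}$ and the equivalence \eqref{ewflkwenfjkewneiwjffefff21}, this is an object of $\Fun^{\colim}(G\Spc\cX,\Sp)$ which we may evaluate on $G\Sp\cX$. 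Since $G$ is finite and non-trivial, $\Sp$ is cocomplete and stable, so all hypotheses of Corollary~\ref{erwfiowfwefewfwf} are in place.

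First I would apply Corollary~\ref{erwfiowfwefewfwf}: since $G$ is finite and, by the contradiction hypothesis, $G_{can,min}$ lies in $G\Sp\cX\langle G\Orb\otimes\Sp\cX\rangle$, the comparison map
$$\beta_{K\C\cX^{G},G_{can,min}}\colon K\C\cX^{G,\Bredon}(G_{can,min})\longrightarrow K\C\cX^{G}(G_{can,min})$$
is an equivalence. Next I would recall the two computations from Example~\ref{fkvsdfvfdvsdfvsdvs}: on the target, Morita invariance together with $\C[G]\simeq\prod_{\hat G}\C$ for finite $G$ gives $\pi_{0}(K\C\cX^{G}(G_{can,min}))\cong\pi_{0}(K(\C[G]))\cong\Z^{|\hat G|}$, while on the source the identification \eqref{wergrwegrewffwerf} (valid since $K\C\cX^{G}$ is continuous by \cite[Prop.~8.17]{equicoarse}) yields $\pi_{0}(K\C\cX^{G,\Bredon}(G_{can,min}))\cong\pi_{0}(K\C\otimes BG)\cong\Z$. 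As $G$ is non-trivial we have $|\hat G|\geq 2$, so these two abelian groups have different ranks and $\beta_{K\C\cX^{G},G_{can,min}}$ cannot be an equivalence --- a contradiction. Hence $\Yo_{G}^{s}(G_{can,min})\notin G\Sp\cX\langle G\Orb\otimes\Sp\cX\rangle$, so the fully faithful inclusion $G\Sp\cX\langle G\Orb\otimes\Sp\cX\rangle\to G\Sp\cX$ is not essentially surjective, i.e.\ it is proper.

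There is no substantive obstacle here: the statement is a formal consequence of Corollary~\ref{erwfiowfwefewfwf} together with the non-triviality of the ``defect'' of the comparison map at $G_{can,min}$, which was the content of Example~\ref{fkvsdfvfdvsdfvsdvs}. The only points requiring care are bookkeeping ones --- that $K\C\cX^{G}$ genuinely extends to a colimit-preserving functor $G\Spc\cX\to\Sp$ (so that it belongs to $\Fun^{\colim}(G\Spc\cX,\bC)$ with $\bC$ stable cocomplete and Corollary~\ref{erwfiowfwefewfwf} applies verbatim), and that the subcategory in question is formed inside the stable category $G\Sp\cX$, matching the domain on which $\beta_{E,-}$ is defined.
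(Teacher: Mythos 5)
Your proof is correct and follows exactly the same route as the paper: argue by contradiction, invoke Corollary~\ref{erwfiowfwefewfwf} to conclude the comparison map would be an equivalence, and contradict this with the $\pi_{0}$-rank computation for $E=K\C\cX^{G}$ from Example~\ref{fkvsdfvfdvsdfvsdvs}.
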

\begin{proof}
If, by contradiction, $\Yo_{G}^{s}(G_{can,min})$ belonged to $G\Sp\cX\langle G\Orb\otimes \Sp\cX\rangle$, then the comparison map $\beta_{E,G_{can,min}}$ would be an equivalence for every equivariant coarse homology theory $E$ by Corollary~\ref{erwfiowfwefewfwf} which contradicts the result of the calculation in Example \ref{fkvsdfvfdvsdfvsdvs} for $E=K\C \cX^{G}$.
\end{proof}

\color{black}

%

%
%

In order to formulate the analogue of Corollaries \ref{fiewofewefewfw} and \ref{erwfiowfwefewfwf} for infinite groups we introduce the following version  of Definition \ref{rgioregggegergergerg}.

  \begin{ddd}\label{zfikgloglugilgilk} We 
 let $$G\Sp\cX\langle G\Orb\otimes \Sp \cX_{bd}\rangle$$ be the full stable subcategory of $G\Sp\cX$ generated under colimits by the motives   $S_{min,max}\otimes X$ for $S$ in $G\Orb$ and $X$ in $\Sp\cX_{bd}$. 
 \end{ddd}
  {Let $\bC$ be a cocomplete stable $\infty$-category,   let  $E$ be in $\Fun^{\colim}(G\Spc\cX,\bC)$,   and consider~$X$ in $G\Sp\cX$.}
%
%
\begin{kor}\label{erwfiowfwefewfwf1}  Assume:
\begin{enumerate} \item   $E$  is hyperexcisive.
\item  $X$ belongs {to} $G\Sp\cX\langle G\Orb\otimes \Sp\cX_{bd}\rangle$.\end{enumerate} 
 Then the  comparison map
$$\beta_{E,X}\colon E^{\Bredon}(X)\to E(X)$$ is an equivalence.
 \end{kor}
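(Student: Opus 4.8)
The plan is to argue exactly as in the proof of Corollary \ref{erwfiowfwefewfwf}, replacing the finiteness hypothesis on $G$ by hyperexcisiveness of $E$ wherever it was used. First I would recall that both $E^{\Bredon}$ and $E$ belong to $\Fun^{\colim}(G\Spc\cX,\bC)$ — for $E$ this is the hypothesis, and for $E^{\Bredon}=\underline{E}^{G}\circ Y$ this is the content of Lemma \ref{ewfiwefwefewf}. Hence the source and target of the comparison map $\beta_{E,-}$ are colimit-preserving functors $G\Spc\cX\to\bC$, and the transformation $\beta_{E,-}$ is natural in the argument. Therefore the full subcategory of objects on which $\beta_{E,-}$ is an equivalence is closed under colimits.

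The second step is to identify the generators of $G\Sp\cX\langle G\Orb\otimes\Sp\cX_{bd}\rangle$ (Definition \ref{zfikgloglugilgilk}): by definition these are the motives $S_{min,max}\otimes X$ with $S$ in $G\Orb$ and $X$ in $\Sp\cX_{bd}$. On such a generator, the comparison map $\beta_{E,T_{min,max}\otimes X}$ is an equivalence by Lemma \ref{wefiweofewfewfwef}, precisely under its second alternative hypothesis: $E$ is hyperexcisive and $X\in\Sp\cX_{bd}$. (Here one uses the equivalence \eqref{ewflkwenfjkewneiwjffefff21} to regard $E$ and $E^{\Bredon}$ as colimit-preserving functors on $G\Sp\cX$, as already noted before Lemma \ref{wefiweofewfewfwef}.) Combining this with the closure-under-colimits observation from the first step yields the claim for every $X$ in $G\Sp\cX\langle G\Orb\otimes\Sp\cX_{bd}\rangle$.

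I do not anticipate a genuine obstacle: this corollary is a formal ``colimit-closure'' consequence of the already-proved Lemma \ref{wefiweofewfewfwef}, exactly parallel to how Corollary \ref{erwfiowfwefewfwf} follows from Lemma \ref{wefiweofewfewfwef} in the finite case. The only point requiring a moment's care is bookkeeping about which $\infty$-category one is working in — unstable $G\Spc\cX$ versus stable $G\Sp\cX$ — and making sure that the generators appearing in Definition \ref{zfikgloglugilgilk} are literally the objects $T_{min,max}\otimes X$ to which Lemma \ref{wefiweofewfewfwef}(2) applies. Concretely the proof reads: the domain and codomain of the comparison map preserve colimits, and the comparison map is an equivalence on the generators of $G\Sp\cX\langle G\Orb\otimes\Sp\cX_{bd}\rangle$ by Lemma \ref{wefiweofewfewfwef}; hence it is an equivalence on all of $G\Sp\cX\langle G\Orb\otimes\Sp\cX_{bd}\rangle$.
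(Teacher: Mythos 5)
Your proposal is correct and follows essentially the same route as the paper: the paper's proof of Corollary \ref{erwfiowfwefewfwf1} just says ``The argument is the same as for Corollary \ref{erwfiowfwefewfwf},'' which in turn is the colimit-closure argument you give, invoking Lemma \ref{wefiweofewfewfwef} on the generators (here under its second alternative hypothesis, hyperexcisiveness of $E$ and $X\in\Sp\cX_{bd}$).
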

\begin{proof}
The argument is the same as for Corollary \ref{erwfiowfwefewfwf}.
\end{proof}

\newcommand{\hlg}{\mathrm{hlg}}

\begin{rem}\label{fvoihweririjfeewfewfewfewfw}
The functor
$$G\Orb\to G\Sp\cX \ , \quad S\mapsto \Sigma \Yo^{{s}}_{G}(S_{min,max})$$
determines an equivariant homology theory 
$$\cO^{\infty}_{\hlg}\colon G\Top\to G\Sp\cX\ ,$$
see also 
\cite[Def.~10.10]{equicoarse}. 
 The following {lemma} shows that the subcategory $G\Sp\cX\langle G\Orb\otimes \Sp\cX_{bd}\rangle$
 is sufficiently rich.

\begin{lem}\label{ergbioeoreggrwrgwergwrgrg}
If $W$ is a $G$-CW complex, then $\cO^{\infty}_{\hlg}(W)$ belongs to $G\Sp\cX\langle G\Orb\otimes \Sp\cX_{bd}\rangle$.  
\end{lem}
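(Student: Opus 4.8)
The plan is to exploit that $\cO^{\infty}_{\hlg}$ is an equivariant homology theory with values in the cocomplete stable $\infty$-category $G\Sp\cX$ and is, by construction, essentially uniquely determined by its values on $G\Orb$. By Elmendorf's theorem (see \eqref{rvelkm3l4kgferveve} and the discussion following Definition \ref{uiweogwerwwefwef}), such a theory factors as $\cO^{\infty}_{\hlg}\simeq\hat E\circ\tilde Y$ for an essentially unique colimit-preserving functor $\hat E\colon\PSh(G\Orb)\to G\Sp\cX$, where $\tilde Y\colon G\Top\to\PSh(G\Orb)$ is the functor \eqref{v4toi4hgkjnetkvevv}; the characterization of $\cO^{\infty}_{\hlg}$ then gives $\hat E(i(S))\simeq\cO^{\infty}_{\hlg}(S)\simeq\Sigma\Yo^{s}_{G}(S_{min,max})$ for every $S$ in $G\Orb$, where $i\colon G\Orb\to\PSh(G\Orb)$ denotes the Yoneda embedding.

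Next I would reduce to the orbit cells by running over a chosen $G$-CW structure on $W$. The skeletal filtration $W=\colim_{n}W_{n}$ exhibits $\tilde Y(W)$ as a sequential colimit in $\PSh(G\Orb)$, with $\tilde Y(W_{n})$ obtained from $\tilde Y(W_{n-1})$ by a pushout along the map induced by the cell attachment $\coprod_{\alpha}(G/H_{\alpha}\times S^{n-1})\hookrightarrow\coprod_{\alpha}(G/H_{\alpha}\times D^{n})$. Here one uses that $\tilde Y$ inverts $G$-homotopy equivalences and sends homotopy pushouts in $G\Top$ to pushouts --- the standard cellular input to Elmendorf's theorem --- and that $D^{n}$ is $G$-contractible, so $\tilde Y(G/H_{\alpha}\times D^{n})\simeq i(G/H_{\alpha})$. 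Since $\hat E$ preserves all colimits, $\cO^{\infty}_{\hlg}(W)$ is assembled by the very same diagram of pushouts and a sequential colimit out of the objects $\hat E(i(G/H_{\alpha}))\simeq\Sigma\Yo^{s}_{G}((G/H_{\alpha})_{min,max})$.

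It then remains to check that each $\Sigma\Yo^{s}_{G}(S_{min,max})$ with $S$ in $G\Orb$ lies in $G\Sp\cX\langle G\Orb\otimes\Sp\cX_{bd}\rangle$. The one-point bornological coarse space $*$ is bounded, hence $\Yo^{s}(*)$, and therefore also $\Sigma\Yo^{s}(*)$, lies in the stable subcategory $\Sp\cX_{bd}$. As $\Yo^{s}(*)$ is the tensor unit (equipped with the trivial action), $S_{min,max}\otimes\Sigma\Yo^{s}(*)\simeq\Sigma\Yo^{s}_{G}(S_{min,max})$, so this object is precisely one of the generators of $G\Sp\cX\langle G\Orb\otimes\Sp\cX_{bd}\rangle$ in the sense of Definition \ref{zfikgloglugilgilk}. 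Since that subcategory is closed under arbitrary colimits, the colimit presentation from the previous paragraph yields $\cO^{\infty}_{\hlg}(W)\in G\Sp\cX\langle G\Orb\otimes\Sp\cX_{bd}\rangle$, as claimed.

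I expect the only delicate point to be the second step, namely the identification of $\tilde Y$ applied to a $G$-CW complex with an iterated colimit of representables in $\PSh(G\Orb)$ following the cell structure; everything else is formal manipulation of colimit-preserving functors. If one prefers to avoid invoking Elmendorf's theorem, the same argument can be carried out by induction on the dimension of $W$ directly with $\cO^{\infty}_{\hlg}$, using homotopy invariance, excision (to turn each cell attachment into a pushout), and compatibility with coproducts and sequential colimits, together with the observation that $G/H\times S^{n-1}$ is again a $G$-CW complex all of whose cells are of the form $G/H$, so that the inductive hypothesis applies to it.
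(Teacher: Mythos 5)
Your proof is correct and takes essentially the same route as the paper's (very terse) argument: present the $G$-CW complex as a homotopy colimit of orbits, use that $\cO^{\infty}_{\hlg}$ sends this to a colimit in $G\Sp\cX$, identify $\cO^{\infty}_{\hlg}(S)\simeq\Sigma\Yo^{s}_{G}(S_{min,max})$ with a generator of $G\Sp\cX\langle G\Orb\otimes\Sp\cX_{bd}\rangle$, and conclude by closure under colimits; the detour through $\hat E\circ\tilde Y$ and Elmendorf's theorem is just an explicit unpacking of the paper's appeal to $\cO^{\infty}_{\hlg}$ being an equivariant homology theory determined by its values on $G\Orb$.
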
 
 \begin{proof}
We have $\Sigma \Yo^{{s}}_{G}(S_{min,max})\in G\Sp\cX\langle G\Orb\otimes \Sp\cX_{bd}\rangle$ for any $S$ in $G\Orb$. Then $\cO^{\infty}_{\hlg}(W)$ is a colimit of a diagram of objects in $ G\Sp\cX\langle G\Orb\otimes \Sp\cX_{bd}\rangle$ and therefore also contained in this subcategory.
 \end{proof}
 
 If $G$ is finite (or $E$ is hyperexcisive, respectively), then Corollary \ref{erwfiowfwefewfwf} (or Corollary  \ref{erwfiowfwefewfwf1}, respectively) applies to $\cO_{\hlg}^{\infty}(A)$ in place of $X$ for $A$ a $G$-CW complex. \hrB
 \end{rem}
 
%


We now consider some examples showing that the $E^{Bredon}\to E$ is far from being  an equivalence in general.
\begin{kor}\label{weijgowegrefweferf}
Assume \begin{enumerate}
\item $G$ is infinite.
\item $E$ is continuous.
\end{enumerate}
Then  $E^{Bredon}(G_{can,min})\simeq 0$. 
\end{kor}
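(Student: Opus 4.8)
The plan is to deduce this at once from Example~\ref{fioergwergewrgewrgregw}. Since $E$ is continuous, the equivalence \eqref{wergrwegrewffwerf} identifies $E^{Bredon}(G_{can,min})$ with $\colim_{BG} E(G_{min,max})$, where $G_{min,max}$ carries the free and transitive $G$-action by which the orbit $G$ becomes an object of $G\Orb$. It therefore suffices to show that $E(G_{min,max})\simeq 0$: granting this, $E^{Bredon}(G_{can,min})\simeq \colim_{BG} 0\simeq 0$, since $\bC$ is stable.

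To prove $E(G_{min,max})\simeq 0$ I would exhibit a trapping exhaustion of $G_{min,max}$ (in the sense of \cite[Def.~5.6]{equicoarse}) whose only member is the empty subspace, and then invoke continuity of $E$. Two observations make this work. First, $G_{min,max}$ is bounded, the whole set $G$ being a bounded subset; hence every locally finite subset of $G_{min,max}$ is finite. Second, a nonempty $G$-invariant subset of $G_{min,max}$ is all of $G$, because the action is transitive, and $G$ is infinite; so the only invariant locally finite subset of $G_{min,max}$ is $\emptyset$. Since in addition every entourage of $G_{min,max}$ is contained in the diagonal (minimal coarse structure), so that $U[\emptyset]=\emptyset$ for every entourage $U$, the one-element family $\{\emptyset\}$ is a big family, and by the second observation it is even a trapping exhaustion. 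Continuity of $E$ (\cite[Def.~5.15]{equicoarse}) then gives $E(G_{min,max})\simeq \colim_{Y\in\{\emptyset\}} E(Y)\simeq E(\emptyset)\simeq 0$, as desired.

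The only delicate point is the claim that $\{\emptyset\}$ qualifies as a trapping exhaustion, i.e.\ that the definition does not secretly require the family to cover $X$; this is precisely where the hypothesis that $G$ is infinite is used. For finite $G$ the space $G_{min,max}$ has the nonempty invariant (and, being finite, locally finite) subset $G$ itself, the argument collapses, and indeed the conclusion then fails, as the computation in Example~\ref{fkvsdfvfdvsdfvsdvs} shows.
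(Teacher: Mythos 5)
Your proposal is correct and takes the same route as the paper: reduce via \eqref{wergrwegrewffwerf} to showing $E(G_{min,max})\simeq 0$, observe that for infinite $G$ the space $G_{min,max}$ has no non-empty $G$-invariant locally finite subsets (bounded space forces locally finite subsets to be finite; transitivity forces non-empty invariant subsets to be all of $G$), and then conclude by continuity. You merely spell out in more detail the step the paper states in one sentence, including the point that $\{\emptyset\}$ is a valid trapping exhaustion.
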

\begin{proof}
If $G$ is infinite, then $G_{ min,max}$ does not admit non-empty $G$-invariant locally finite 
subsets. By continuity of $E$ we get
$E(G_{ min,max})\simeq 0$ and hence $E^{Bredon}(G_{can,min})\simeq 0$ by~\eqref{wergrwegrewffwerf}.
\end{proof}

 The comparison map $\beta_{E,G_{can,min}}$ is induced by the evaluation map \eqref{qeklnklnqklnwefqefqwefqefq} which under the identification  \eqref{owjgopjkerpgferwgwegwergwergeg} (applied to the second tensor factor of the domain)  is given by the map 
  $$G_{min,max}\otimes G^{triv}_{can,max}\to G_{can,min}\ , \quad  (h,h')\mapsto hh'\ .$$   Note that this map is compatible with the additional $G$-action on the domain by $(g,(h,h'))\to (hg^{-1},gh')$. It induces a map
$$E(G_{min,max}\otimes G^{triv}_{can,max})\to E(G_{can,min})$$
which descends to
$$\beta_{E,G_{can,min}}:\colim_{BG} E(G_{min,max}\otimes G^{triv}_{can,max})\to E(G_{can,min})\ .$$

\begin{rem}In the following we want to apply this formula to $E=F_{G_{can,min}}$ for a continuous $F$. Then $E$ is not continuous in general and we therefore can not simplify these formulas further like for the left-hand side  of \eqref{werjoiejvioervwervwervre}. \hrB
\end{rem}

In general we do not know how to calculate $E(G_{can,min})$, but for Abelian groups $G$ we can go further. The main point is the existence of the vertical maps in \eqref{vkjerhkjervervevw} below which for Abelian groups are morphisms of $G$-bornological coarse spaces.

For simplicity we specialize to the case $G=\Z$.  Recall that a morphism $f $ in a cocomplete stable $\infty$-category a is called a phantom morphism if  $f\circ \phi\simeq 0$ for all morphisms $\phi$ with compact domain.

\begin{lem}\label{tgegrwegergwerg}
Assume:
\begin{enumerate}
\item $G=\Z$.
\item $E=F_{G_{can,min}}$ for a continuous equivariant coarse homology theory $F$.
\end{enumerate}
Then
\begin{enumerate}
\item \label{wekogegwergwrgrvdfvs}
 $E(G_{can,min})\simeq \Sigma E(*) $.
 \item \label{ergoijoqwefqwefwqefqwefef}
 The comparison map
 $\beta_{E,G_{can,min}}\colon E^{Bredon}(G_{can,min})\to E(G_{can,min})$ is a phantom map. \end{enumerate}
\end{lem}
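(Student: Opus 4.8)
The plan is to prove the two parts in turn, with \ref{ergoijoqwefqwefwqefqwefef} building on \ref{wekogegwergwrgrvdfvs}. For \ref{wekogegwergwrgrvdfvs} I would unfold $E(\Z_{can,min})=F(\Z_{can,min}\otimes\Z_{can,min})$ with the diagonal $\Z$-action, and then use that $\Z$ is abelian: the shear $(a,b)\mapsto(a,a^{-1}b)$ is an \emph{isomorphism} of $\Z$-bornological coarse spaces $\Z_{can,min}\otimes\Z_{can,min}\xrightarrow{\simeq}\Z_{can,min}\otimes\Z^{triv}_{can,min}$, the target carrying the translation action on the first tensor factor and the trivial action on the second (these are the vertical maps alluded to before the statement; for a non-abelian group the same formula is only a set-theoretic map, since the controlledness check then involves a conjugation). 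On the second, trivial-action factor I then apply excision to the complementary pair given by the two half-lines $\Z_{\ge 0}$ and $\Z_{\le 0}$. Each half-line is flasque, and the tensor product of a flasque $\Z$-bornological coarse space with an arbitrary one is again flasque, so $F$ vanishes on $\Z_{\ge 0}\otimes\Z_{can,min}$ and $\Z_{\le 0}\otimes\Z_{can,min}$; the excision push-out square then has its remaining corner $F(\{0\}\otimes\Z_{can,min})\simeq F(\Z_{can,min})\simeq E(*)$, and hence $E(\Z_{can,min})\simeq\Sigma E(*)$.

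For \ref{ergoijoqwefqwefwqefqwefef} I would start from the description in Example \ref{fioergwergewrgewrgregw} (only \eqref{owjgopjkerpgferwgwegwergwergeg} is needed, not continuity), which gives $E^{\Bredon}(\Z_{can,min})\simeq\colim_{B\Z}E(\Z_{min,max}\otimes\Z^{triv}_{can,max})$ with $\beta$ induced by the $B\Z$-invariant evaluation $(h,h')\mapsto hh'$. The change of variables $(h,h')\mapsto(hh',h)$ is an isomorphism of $\Z$-bornological coarse spaces $\Z_{min,max}\otimes\Z^{triv}_{can,max}\xrightarrow{\simeq}\Z_{can,max}\otimes\Z_{min,max}$ (the minimal coarse structure on the first factor makes the check harmless), under which the $B\Z$-action lives only on the second factor while the evaluation becomes the projection $\Z_{can,max}\otimes\Z_{min,max}\to\Z_{can,max}$ followed by the identity-on-underlying-sets morphism $\iota\colon\Z_{can,max}\to\Z_{can,min}$. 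In particular this projection is $B\Z$-equivariant for the trivial $B\Z$-action on $E(\Z_{can,max})$, so $\beta$ factors through $E(\iota)\otimes\mathrm{id}_{(B\Z)_{+}}\colon E(\Z_{can,max})\otimes(B\Z)_{+}\to E(\Z_{can,min})\otimes(B\Z)_{+}$, composed with the augmentation $(B\Z)_{+}\to S^{0}$ on the target factor.

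It therefore suffices to show that $E(\iota)\colon E(\Z_{can,max})\to E(\Z_{can,min})$ is phantom, since smashing a phantom map with the compact spectrum $(B\Z)_{+}$ keeps it phantom and every map factoring through a phantom map is phantom. For this I would use $E=F_{\Z_{can,min}}$ and continuity of $F$: the invariant locally finite subsets of $\Z_{can,max}\otimes\Z_{can,min}$ are precisely the $\{(a,z)\;:\;a-z\in L\}$ for finite $L\subseteq\Z$, each with induced structure that of $L^{triv}\otimes\Z_{can,min}$, so that $E(\Z_{can,max})\simeq\colim_{L}F(L^{triv}\otimes\Z_{can,min})$ is a filtered colimit. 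Applying the shear isomorphism of \ref{wekogegwergwrgrvdfvs} to the target $E(\Z_{can,min})\simeq F(\Z^{triv}_{can,min}\otimes\Z_{can,min})$ (this is where abelianness re-enters), the map from the stage $L$ becomes $F$ of the inclusion $L^{triv}\otimes\Z_{can,min}\hookrightarrow\Z^{triv}_{can,min}\otimes\Z_{can,min}$ of the finite set $L$ into the trivial-action line; since $L$ is finite, $F(L^{triv}\otimes\Z_{can,min})$ splits as the finite sum over $L$, and the inclusion of any point of $\Z^{triv}_{can,min}$ factors through one of the two flasque half-lines, hence induces the zero map after tensoring with $\Z_{can,min}$ and applying $F$. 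So $E(\iota)$ is a map out of a filtered colimit which is null at every stage, and is therefore phantom; this is also where the gap between ``phantom'' and ``zero'' sits, as a $\lim^{1}$ of the mapping groups.

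The most delicate point I foresee is keeping track of the two commuting $\Z$-actions — the genuine one seen by $E$ and the $B\Z$-action coming from the coend in the Bredon construction — and of which of the abelian change-of-variables maps are honest bornological coarse isomorphisms and which are merely morphisms, the bornologies of $\Z_{min,max}$, $\Z_{can,max}$ and $\Z_{can,min}$ transforming very differently under a shear, together with the (routine) verifications that the half-lines used are flasque in the equivariant sense and that the relevant decompositions form complementary pairs for equivariant excision.
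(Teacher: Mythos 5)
Your proof is correct, and for part~\ref{ergoijoqwefqwefwqefqwefef} it takes a genuinely different route from the paper's.

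For part~\ref{wekogegwergwrgrvdfvs} your argument is the same as the paper's up to unwinding: the right vertical map of the paper's diagram \eqref{vkjerhkjervervevw} is exactly your shear, and the paper then cites the suspension formula $E(X\otimes G^{triv}_{can,min})\simeq\Sigma E(X)$ (which you re-derive by the half-line decomposition). For part~\ref{ergoijoqwefqwefwqefqwefef} the paper keeps the $\colim_{BG}$ in place, applies continuity to the triple tensor product $G_{can,min}\otimes G^{triv}_{min,max}\otimes G^{triv}_{can,max}$ to get $\colim_{BG}\colim_{L}F(L)$, rewrites by a cofinality argument as a filtered colimit over $G$-saturated finite pieces $Q$ of the $BG$-colimit, and then kills each $Q$-stage by factoring its image through a flasque half-line inside the $BG$-colimit. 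You instead use the shear to pull the outer $BG$-action off the controlled factor and isolate it as a tensor with the compact spectrum $(B\Z)_{+}$, thereby reducing the entire problem to showing that the single, non-equivariant bornology-change map $E(\iota)\colon E(\Z_{can,max})\to E(\Z_{can,min})$ is phantom, and then run the continuity-plus-shear-plus-flasque-half-line mechanism for that map. This is cleaner and more directly exposes where the phantom behaviour comes from (the bornology change on $\Z_{can}$); the trade-off is that the clean separation of the $BG$-action from the controlled data is precisely what abelianness of $G$ buys, whereas the paper's order of operations (cofinality inside the $BG$-colimit) is closer in spirit to what one would attempt for a general group, even though both proofs are stated only for $G=\Z$.

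One small inaccuracy: you say $F(L^{triv}\otimes\Z_{can,min})$ ``splits as the finite sum over $L$''. The coarse structure induced on the finite set $L$ from $\Z_{can}$ is the \emph{maximal} one (any two points of $L$ are $U_r$-close for $r\ge\operatorname{diam}L$), not the minimal one, so $L^{triv}\otimes\Z_{can,min}$ is not a disjoint union of $|L|$ copies of $\Z_{can,min}$ and $F$ of it does not split as a sum. This is harmless for your argument, since all you actually use is that the inclusion $L^{triv}\hookrightarrow\Z^{triv}_{can,min}$ factors through a single flasque half-line $[\min L,\infty)^{triv}_{can,min}$ containing $L$, whence the stage map is already zero without any splitting.
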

\begin{proof}
We have a commutative diagram in $G\BC$ \begin{equation}\label{vkjerhkjervervevw} 
 \xymatrix{G_{can,min}\otimes G_{min,max}\otimes G^{triv}_{can,max}\ar[rr]^-{ (k,h,h')\mapsto (k,hh')}\ar[d]_{\cong}^{(k,h,h')\mapsto (k,k^{-1}h,h')}&&G_{can,min}\otimes G_{can,min}\ar[d]_{\cong}^{(k,h)\mapsto (k,k^{-1}h)}\\G_{can,min}\otimes G^{triv}_{min,max}\otimes G^{triv}_{can,max}\ar[rr]^-{(k,h,h')\mapsto (k,hh')}&&G_{can,min}\otimes G^{triv}_{can,min}}\ .   \end{equation}
 
 For   every $G$-bornological coarse space $X$ and equivariant coarse homology theory $E$ we have an equivalence  $$E(X\otimes \Z_{can,min}^{triv})\simeq \Sigma E(X)$$ {obtained by excision, see} \cite[Ex.~4.9]{buen}. 
 The right vertical isomorphism in \eqref{vkjerhkjervervevw} now provides the first   equivalence in
 $$
E(G_{can,min}) \simeq  E(G^{triv}_{can,min}) \simeq  \Sigma E(*)$$
proving \ref{wekogegwergwrgrvdfvs}.
 
 Since $F$ is continuous and in view of \eqref{vkjerhkjervervevw} the comparison map $\beta_{E,G_{can,min}}$ is equivalent to the map
 $$\colim_{BG}\colim_{L} F(L)\to E(  G_{can,min}^{triv})\ ,$$
 where $L$ runs over the poset of invariant locally finite subsets of $G_{can,min}\otimes G^{triv}_{min,max}\otimes G^{triv}_{can,max}$.
 
 Let $L$ be a locally finite invariant subset of $G_{can,min}\otimes G^{triv}_{min,max}\otimes G^{triv}_{can,max}$. Then $L= G\times L_{0}$ for some finite subset $L_{0}$ of $G^{triv}_{min,max}\otimes G^{triv}_{can,max}$.
 Hence the comparison map is equivalent to $$\colim_{BG}\colim_{L_{0}} E(L_{0})\to E(  G_{can,min}^{triv})\ ,$$
 where now $L_{0}$ runs over the poset of finite subsets of $G^{triv}_{min,max}\otimes G^{triv}_{can,max}$.

Recall that $G^{triv}_{min,max}\otimes G^{triv}_{can,max}$ has an additional $G$-action by $g(h,h'):=(hg^{-1},gh')$.
We let $\cF$ denote the poset of $G$-invariant subsets of the form $G L_{0}$ for finite subsets $L_{0}$ as above.

We can now rewrite by a cofinality 
{argument} the domain of the comparison map as
\begin{eqnarray*}\colim_{BG}\colim_{L_{0}}   E(L_{0})&\simeq& \colim_{BG} \colim_{Q\in \cF} \colim_{L_{0}\subseteq Q} E(L_{0})\\&\simeq&   \colim_{Q\in \cF}  \colim_{BG} \colim_{L_{0}\subseteq Q} E(L_{0})\ .
\end{eqnarray*}
The multiplication map $G^{triv}_{min,max}\otimes G^{triv}_{can,max}\to G_{can,min}^{triv}$ sends $Q$ in $\cF$ to a finite subset of~$G_{can,min}^{triv}$. This finite subset is  contained {in} some flasque subspace $R:=[n,\infty)$ of $G^{triv}_{can,min}$ for some sufficiently small $n$ in $\Z$. For fixed $Q$ the canonical morphism therefore has a factorization 
$$  \colim_{BG} \colim_{L_{0}\subseteq Q} E(L_{0})\to \colim_{BG}E(R)\to E(G_{can,min}^{triv})$$ and hence vanishes. The comparison map itself is then a filtered colimit  over $Q$ in $\cF$ of zero maps and hence a phantom map. This finishes the proof of \ref{ergoijoqwefqwefwqefqwefef}. 
\end{proof}

\begin{kor}\label{weruhigwergrefwfref}  If $G=\Z$, then  
$\Yo_{G}^{s}(G_{can,min})$ does not belong to $G\Sp\cX\langle G\Orb\otimes \Sp\cX_{bd}\rangle$.
\end{kor}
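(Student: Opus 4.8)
The plan is to argue by contradiction, in the spirit of the corresponding corollary for finite nontrivial $G$ established above, but replacing the rank count (which is unavailable for $G=\Z$) by the phantom‑map phenomenon recorded in Lemma \ref{tgegrwegergwerg}. So assume, for contradiction, that the motive of $G_{can,min}$ lies in $G\Sp\cX\langle G\Orb\otimes \Sp\cX_{bd}\rangle$.

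First I would fix a continuous equivariant coarse homology theory $F\colon \Z\BC\to \bC$ with target a compactly generated stable $\infty$-category (say $\bC=\Sp$), and form the twist $E:=F_{G_{can,min}}$, $E(X):=F(X\otimes G_{can,min})$. By Lemma \ref{fewfpowef23r2} the functor $E$ is hyperexcisive, so Corollary \ref{erwfiowfwefewfwf1} applies with $X=G_{can,min}$ and shows that the comparison map $\beta_{E,G_{can,min}}\colon E^{\Bredon}(G_{can,min})\to E(G_{can,min})$ is an equivalence. On the other hand, Lemma \ref{tgegrwegergwerg}(\ref{ergoijoqwefqwefwqefqwefef}) tells us that this very map is a phantom map.

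The key step is then the observation that a morphism that is at the same time an equivalence and a phantom map must have zero target: if $\beta\colon A\xrightarrow{\simeq}B$ is phantom, then for every compact object $C$ and every $\phi\colon C\to B$ the composite $\beta^{-1}\circ\phi$ has compact domain, so $\phi\simeq\beta\circ(\beta^{-1}\circ\phi)\simeq 0$; hence $B$ admits no nonzero maps from compact objects and therefore $B\simeq 0$. Applied to $\beta_{E,G_{can,min}}$ this gives $E(G_{can,min})\simeq 0$, and Lemma \ref{tgegrwegergwerg}(\ref{wekogegwergwrgrvdfvs}) then forces $\Sigma F(G_{can,min})\simeq E(G_{can,min})\simeq 0$, i.e. $F(G_{can,min})\simeq 0$. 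To finish I would contradict this by exhibiting a single continuous $F$ with $F(G_{can,min})\not\simeq 0$: the equivariant coarse algebraic $K$-homology $K\C\cX^{G}$ is continuous (Example \ref{fkvsdfvfdvsdfvsdvs}) and satisfies $K\C\cX^{G}(G_{can,min})\simeq K(\C[\Z])$, which is nonzero since $\pi_{0}K(\C[\Z])\cong K_{0}(\C[t,t^{-1}])\cong\Z$.

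The only delicate point I anticipate is the phantom‑equivalence step: it needs the target $\bC$ to be rich enough that "no nonzero maps from compact objects" actually detects the zero object. This is automatic for $\bC=\Sp$, where one can argue directly on homotopy groups — a phantom map induces $0$ on all $\pi_{n}$, an equivalence induces isomorphisms, so $\pi_{*}E(G_{can,min})=0$. Everything else is a mechanical combination of Lemma \ref{fewfpowef23r2}, Lemma \ref{tgegrwegergwerg}, and Corollary \ref{erwfiowfwefewfwf1}.
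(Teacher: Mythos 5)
Your proof is correct and follows essentially the same route as the paper's: contradiction via Corollary \ref{erwfiowfwefewfwf1}, Lemma \ref{fewfpowef23r2} for hyperexcisiveness, Lemma \ref{tgegrwegergwerg} for the phantom property, and $F=K\C\cX^{G}$ to rule out a phantom equivalence. The only cosmetic difference is that you spell out explicitly that a phantom equivalence in $\Sp$ forces the target to vanish, whereas the paper phrases the same obstruction as "$F(G_{can,min})$ is not a phantom object"; these are equivalent formulations of the same step.
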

\begin{proof} Let $G:=\Z$.
 There exists a continuous  equivariant coarse homology theory $F$ such that $F(G_{can,min})$ is not
a phantom object, i.e., $\id_{F(G_{can,min})}$ is not a phantom morphism. We can e.g.~choose $F=K\C\cX^{G}$. We then set $E:=F_{G_{can,min}}$. Then being a phantom morphism by Lemma \ref{tgegrwegergwerg}.\ref{ergoijoqwefqwefwqefqwefef} the  comparison morphism $E^{Bredon}(G_{can,min})\to E(G_{can,min})\simeq \Sigma E(*)$ is not an equivalence.  Since $E$ is hyperexcisive  by Lemma   \ref{fewfpowef23r2},  in view of Lemma \ref{erwfiowfwefewfwf1}  $\Yo_{G}^{s}(G_{can,min})$ does not belong to $G\Sp\cX\langle G\Orb\otimes \Sp\cX_{bd}\rangle$.
  \end{proof}

 \subsection{Coarse Assembly Maps}\label{greoijio3gergrgregergerg}
\newcommand{\Ass}{\mathrm{Asbl}}

  Let $\cF$ be a {conjugation invariant} set  of   subgroups of $G$,  $\bC$ be a cocomplete $\infty$-category, and consider 
  a functor 
 $E\colon G\Orb\to \Fun^{\colim} (\Spc\cX,\bC)$. Using the functors from the adjunction \eqref{fwrelkoelfwefwef}  we define
$$E_{\cF}:=\Ind_{\cF}\Res_{\cF}E:G\Orb\to \Fun^{\colim} (\Spc\cX,\bC)\ .$$
The counit of the adjunction \eqref{fwrelkoelfwefwef}  provides a natural transformation
  \begin{equation}\label{vrevih3oif3rf3efrc}
E_{\cF}\to E\ .
\end{equation}
 \begin{ddd}\label{rgkierog43grgreg}For $X$ in $\Fun(G\Orb^{op},\Spc\cX)$ the morphism
$$\alpha_{E,\cF,X}\colon E^{G}_{\cF}(X)\to E^{G}(X)$$ induced by \eqref{vrevih3oif3rf3efrc}
is called the assembly map.\end{ddd}
\begin{rem}
The assembly map defined in \ref{rgkierog43grgreg} is the bornological-coarse analogue of the {Davis-L\"uck} assembly map  considered
in the study of isomorphism conjectures like the Baum-Connes or {Farrell}-Jones conjecture. \hrB
\end{rem}

One can ask under which conditions on $\cF$, $X$, and $E$ the assembly map is an equivalence.

\begin{ex}
If   $X\simeq \Ind_{\cF}\Res_{\cF}X$, then if follows from an analogue of \eqref{ewrgergrefwf} with $\cF^{\perp}$ replaced by $\cF$  that $\alpha_{E,\cF,X}$ is an equivalence. \heB
\end{ex}
 
 Let $\bC$ be  in addition stable  stable,    $E$ be in $\Fun^{\colim}(G\Sp\cX,\bC)$, and  $\cF$ be  a 
family   of subgroups  {$\cF$} of $G$.  Then we have {the following}  two stages of approximations of $E$
$$ \sigma_{E,\cF}:E^{\Bredon}_{\cF}\stackrel{\alpha_{\underline{E},\cF}\circ Y}{\to} E^{\Bredon}\stackrel{\beta_{E}}{\to} E\ .$$
We first approximate $E$ by its associated Bredon-style equivariant coarse homology theory $E^{\Bredon}$, and then we approximate the latter by its restriction $E^{\Bredon}_{\cF}$ to the family $\cF$.
The quality of the approximation $\beta_{E}$ of $E$ by its associated Bredon-style homology theory has been discussed in Section \ref{f42oifj2o3f32f32e}. In the present section we consider the  approximation
$\alpha_{E,\cF}:=\alpha_{\underline{E},\cF}$. 

If $H$ is a subgroup of $G$, then
 we have  the induction  functor
$$\Ind_{H}^{G}\colon H_{\cF\cap H}\Orb\to G_{\cF}\Orb \ , \quad U\mapsto G\times_{H}U\ .$$     
For $E$ as above and for every subgroup $H$ of $G$ we then define the functor 
\begin{equation}\label{veroi3joi4ferfreve}
E_{H}\colon H\Orb \to \bC, \quad E_{H}(S):=E( \Ind_{H}^{G}(S)_{min,max})\ .
\end{equation}   For any functor $F$ defined on $H\Orb$ with values in some cocomplete target, and  for any family of subgroups $\cF^{\prime}$ of $H$, we 
 have  the Davis-L\"uck type assembly map
\begin{equation}\label{dsfvfdsvfdverwvvsvdsfvsfdvsfv}  \Ass_{F,\cF^{\prime} }\colon \colim_{U\in H_{\cF^{\prime}  }\Orb} F(U)\to  F(*)  \end{equation} 
induced by the collection of morphisms $U\to *$ for all $U$ in $ H_{\cF'}\Orb $. 


{Let $\bC$ be a cocomplete stable $\infty$-category,   let
  $E$ be in $\Fun^{\colim}(G\Spc\cX,\bC)$, and let  $X$ be in $G\Sp\cX$.} 
  By $G\Sp\cX_{bd}$ we denote the full stable subcategory of $G\Sp\cX$ generated under colimits by motives bounded $G$-bornological coarse spaces.
\begin{prop}\label{efgijofdewfwefewfwf1}Assume:\begin{enumerate}
\item $E$ is hyperexcisive.
\item $X$ belongs to $G\Sp \cX_{bd}${.} \item \label{frelkn4lk3refrefeer1}
The assembly map $\Ass_{E_{H},\cF\cap H}$ is an equivalence for every subgroup $H$ of $G$.
   \end{enumerate} 
   Then 
$$\alpha_{E,\cF,X}  \colon E^{\Bredon}_{\cF}(X)\to  E^{\Bredon} (X)$$
is an equivalence.
 \end{prop}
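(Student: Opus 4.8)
\emph{Strategy.} The plan is to reduce the statement to a convenient set of generators of the domain and, on those generators, to recognise the comparison map $\alpha_{E,\cF,X}$ as (a twisted form of) the assembly maps $\Ass_{E_{H},\cF\cap H}$, which are equivalences by assumption \ref{frelkn4lk3refrefeer1}.

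First I would note that both $E^{\Bredon}=\underline{E}^{G}\circ Y$ and $E^{\Bredon}_{\cF}=(\underline{E}_{\cF})^{G}\circ Y$ preserve colimits in their argument. For $E^{\Bredon}$ this is Lemma \ref{ewfiwefwefewf}; for $E^{\Bredon}_{\cF}$ it follows the same way once one observes that $\underline{E}_{\cF}=\Ind_{\cF}\Res_{\cF}\underline{E}$ still takes values in $\Fun^{\colim}(\Spc\cX,\bC)$, because $\underline{E}_{\cF}(S)\simeq\colim_{(R\to S)\in G_{\cF}\Orb/S}\underline{E}(R)$ is a colimit of colimit-preserving functors. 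Hence, exactly as in the proof of Corollary \ref{erwfiowfwefewfwf1}, it suffices to verify that $\alpha_{E,\cF,X}$ is an equivalence for the generators $X=T_{min,max}\otimes X_{0}$, with $T=G/H$ running through $G\Orb$ and $X_{0}$ through $\Sp\cX_{bd}$.

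\emph{The computation on generators.} For such an $X$ I would use $Y(T_{min,max}\otimes X_{0})(S)\cong\Hom_{G\Orb}(S,T)_{min,max}\otimes X_{0}$ together with hyperexcisiveness of $E$. Since $\Hom_{G\Orb}(S,T)$ is a set with the trivial $G$-action and $S_{min,max}\otimes X_{0}$ (resp.\ $R_{min,max}\otimes X_{0}$) lies in $G\Sp\cX_{bd}$, Corollary \ref{rgvioerggegegerf} yields natural equivalences
\[
\underline{E}(S)\big(Y(T_{min,max}\otimes X_{0})(S)\big)\simeq y(T)(S)\otimes E(S_{min,max}\otimes X_{0}),
\]
\[
\underline{E}_{\cF}(S)\big(Y(T_{min,max}\otimes X_{0})(S)\big)\simeq y(T)(S)\otimes\underline{E}_{\cF}(S)(X_{0}).
\]
Feeding these into the coend and collapsing the factor $y(T)$ by the co-Yoneda formula — precisely as in the proof of Lemma \ref{wefiweofewfewfwef} — I obtain
\[
E^{\Bredon}(T_{min,max}\otimes X_{0})\simeq E(T_{min,max}\otimes X_{0}),\qquad E^{\Bredon}_{\cF}(T_{min,max}\otimes X_{0})\simeq\underline{E}_{\cF}(T)(X_{0}).
\]
Using the standard identification $G_{\cF}\Orb/(G/H)\simeq H_{\cF\cap H}\Orb$, under which the forgetful functor corresponds to $U\mapsto\Ind_{H}^{G}(U)=G\times_{H}U$, the right-hand side is $\colim_{U\in H_{\cF\cap H}\Orb}E\big(\Ind_{H}^{G}(U)_{min,max}\otimes X_{0}\big)$, and $\alpha_{E,\cF,T_{min,max}\otimes X_{0}}$ becomes the map
\[
\colim_{U\in H_{\cF\cap H}\Orb}E\big(\Ind_{H}^{G}(U)_{min,max}\otimes X_{0}\big)\longrightarrow E\big((G/H)_{min,max}\otimes X_{0}\big)
\]
induced by the maps $\Ind_{H}^{G}(U)\to G/H$. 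As $E(-\otimes X_{0})$ is again hyperexcisive (tensoring by the bounded object $X_{0}$ commutes with pulling out trivial-action sets), this is the assembly map $\Ass_{(E(-\otimes X_{0}))_{H},\cF\cap H}$, which is an equivalence by assumption \ref{frelkn4lk3refrefeer1}. Thus $\alpha_{E,\cF,X}$ is an equivalence on generators, and hence in general.

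\emph{Main obstacle.} The delicate part is the computation on generators: keeping the variances in the twisted arrow category straight, verifying that hyperexcisiveness may legitimately be applied at every step — which hinges on the fact that the only infinite combinatorial factors that occur, the mapping sets $\Hom_{G\Orb}(S,T)$, carry the trivial $G$-action, while the genuinely equivariant factors stay bounded — and then matching the collapsed coend with the assembly map. A secondary subtlety is that assumption \ref{frelkn4lk3refrefeer1} must be invoked for each twist $E(-\otimes X_{0})$ rather than for $E$ alone; this is harmless since such twists are still hyperexcisive, but it should be made explicit.
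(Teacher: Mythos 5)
Your proposal has a genuine gap in the reduction step. You reduce the claim to the generators $T_{min,max}\otimes X_{0}$ with $T\in G\Orb$, $X_{0}\in\Sp\cX_{bd}$, i.e.\ to the generators of the subcategory $G\Sp\cX\langle G\Orb\otimes\Sp\cX_{bd}\rangle$ of Definition \ref{zfikgloglugilgilk}. But the proposition only assumes $X\in G\Sp\cX_{bd}$, and $G\Sp\cX_{bd}$ is a priori strictly larger than $G\Sp\cX\langle G\Orb\otimes\Sp\cX_{bd}\rangle$: a bounded $G$-bornological coarse space need not decompose as a tensor product of a $G$-orbit in $min,max$-structure with a non-equivariant bounded space. (The phrase ``exactly as in the proof of Corollary \ref{erwfiowfwefewfwf1}'' imports the stronger hypothesis of that corollary, which is not available here.) Consequently, your argument only proves the weaker statement for $X\in G\Sp\cX\langle G\Orb\otimes\Sp\cX_{bd}\rangle$.

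The paper's proof reduces only to $X$ a bounded $G$-bornological coarse space (these do generate $G\Sp\cX_{bd}$), and then works one level deeper, on the integrand of the coend. Since $X$ bounded implies $X^{(T)}$ is a bounded non-equivariant bornological coarse space for every $T$, it suffices to show that $\underline{E}_{\cF}(S)(W)\to\underline{E}(S)(W)$ is an equivalence for all $S\in G\Orb$ and all bounded $W\in\BC$. One then pulls the $W$-dependence out on both sides via \eqref{fv4oijioejioccowicjwecwcweccwecwecwec}, $E(R_{min,max}\otimes W)\simeq\pi_{0}(W)\otimes E(R_{min,max})$, so the assembly-map hypothesis \ref{frelkn4lk3refrefeer1} can be applied in the untwisted form in which it is stated. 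This also disposes of the secondary issue you flagged yourself: you invoke the hypothesis for the twisted functors $E(-\otimes X_{0})$ rather than for $E$, and justifying this requires precisely the $\pi_{0}(W)$-extraction the paper performs; merely noting that the twists are again hyperexcisive does not yet produce the needed equivalences of assembly maps. Your computations on the generators $T_{min,max}\otimes X_{0}$ are correct in themselves (essentially reproving Lemma \ref{wefiweofewfewfwef} and part of Lemma \ref{rgio34oergergergregeg}), but they live at the wrong level of generality to establish the proposition as stated.
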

\begin{proof}
 

For a subgroup $H$ of $G$ the 
 induction   functor induces an equivalence of categories
$$\Ind_{H}^{G}\colon H_{\cF\cap H}\Orb\to G_{\cF}\Orb_{/G/H}\ , \quad U\mapsto(G\times_{H}U\to G/H)\ .$$ 
 Indeed, an inverse functor (with obvious transformations witnessing this fact)  is given by
$$ G_{\cF}\Orb_{/G/H} \to H_{\cF\cap H}\Orb \ , \quad (p:T\to G/H)\mapsto  p^{-1}(eH)\ .$$
Therefore the Assumption \ref{frelkn4lk3refrefeer}. can be rewritten  as asserting the equivalence \begin{equation}\label{rgkjwnwjkefnwefwefew}
\colim_{(R\to S)\in G_{\cF}\Orb _{/S}} E(R_{min,max})\stackrel{\simeq}{\to} E(S_{min,max})
\end{equation}
 for every $S$ in $G\Orb$.

Since the domain and the codomain of $\alpha_{E,\cF}$ preserve colimits it suffices to show the equivalence for bounded $G$-bornological coarse spaces $X$.  If $X$ is bounded, then $X^{(S)}$  is bounded  for every 
$S$ in $G\Orb$.  In view of Definition \ref{rgfrihgioffwfwfwefewf} it therefore suffices to show that
$\underline{E}_{\cF}(S)(W)\to \underline{E}(S)(W)$ is an equivalence for every
$S$ in $G\Orb$ and bounded bornological coarse space $W$. 
This follows from the chain of equivalences    
     \begin{eqnarray*}
     \underline{E}_{\cF}(S)(W)&\simeq&(\Ind_{\cF}\Res_{\cF}\underline{E})(S)(W)\\&\simeq&
     \colim_{(R\to S)\in G_{\cF}\Orb}\underline{ E}(R)(W)\\&\simeq&
     \colim_{(R\to S)\in G_{\cF}\Orb}E(R_{min,max}\otimes W)\\&\stackrel{ \eqref{fv4oijioejioccowicjwecwcweccwecwecwec}
}{\simeq}&
 \colim_{(R\to S)\in G_{\cF}\Orb}  \pi_{0}(W)\otimes E(R_{min,max})\\&\stackrel{\eqref{rgkjwnwjkefnwefwefew}}{\simeq}&
  \pi_{0}(W)\otimes E(S_{min,max})\\
  &\stackrel{ \eqref{fv4oijioejioccowicjwecwcweccwecwecwec}
}{\simeq}&
   E(S_{min,min}\otimes W)\\
   &\simeq&
   \underline{E}(S)(W)\ ,
          \end{eqnarray*}
          where we use the assumption that $E$ is hyperexcisive in order to be able  apply  \eqref{fv4oijioejioccowicjwecwcweccwecwecwec}.
\end{proof}
%
%
%
%
%
%

{Let $\bC$ be a cocomplete stable $\infty$-category,   let
  $E$ be in $\Fun^{\colim}(G\Spc\cX,\bC)$, and let  $X$ be in $G\Sp\cX$.} 
Combining Proposition \ref{efgijofdewfwefewfwf1} and Corollary \ref{erwfiowfwefewfwf1}, and using that
$G\Sp\cX\langle G\Orb\otimes \Sp\cX_{bd }\rangle$ is contained in $G\Sp\cX_{bd}$, we get:
\begin{kor}\label{efgijofdewfwefewfwf}Assume:\begin{enumerate}
\item $E$ is hyperexcisive.
\item $X$ belongs to $G\Sp\cX\langle G\Orb\otimes \Sp\cX_{bd}\rangle$.
\item \label{frelkn4lk3refrefeer}

The assembly map $\Ass_{E_{H},\cF\cap H}$ is an equivalence for every subgroup $H$ of $G$.
   \end{enumerate} 
   Then 
$$ \sigma_{E,\cF,X}  : E^{\Bredon}_{\cF}(X)\to  E (X)$$
is an equivalence.
 \end{kor}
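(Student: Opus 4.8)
The plan is to unwind the two-stage factorization of $\sigma_{E,\cF,X}$ and to verify that each factor is an equivalence by invoking the results already established. By construction $\sigma_{E,\cF,X}$ is the composite
$$E^{\Bredon}_{\cF}(X)\xrightarrow{\ \alpha_{E,\cF,X}\ } E^{\Bredon}(X)\xrightarrow{\ \beta_{E,X}\ } E(X)\ ,$$
so it suffices to show that $\alpha_{E,\cF,X}$ and $\beta_{E,X}$ are both equivalences.

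First I would check the hypotheses of Proposition \ref{efgijofdewfwefewfwf1}, which yields that $\alpha_{E,\cF,X}$ is an equivalence. Two of its three assumptions---that $E$ is hyperexcisive and that $\Ass_{E_{H},\cF\cap H}$ is an equivalence for every subgroup $H$ of $G$---are assumed directly. The remaining one requires $X$ to lie in $G\Sp\cX_{bd}$. For this I would record the inclusion
$$G\Sp\cX\langle G\Orb\otimes \Sp\cX_{bd}\rangle\subseteq G\Sp\cX_{bd}\ ,$$
which is elementary: a generator $S_{min,max}\otimes X$ with $S$ in $G\Orb$ and $X$ in $\Sp\cX_{bd}$ is, since $X$ is a colimit of motives $\Yo^{s}(Y)$ of bounded bornological coarse spaces $Y$ and $S_{min,max}\otimes(-)$ preserves colimits, a colimit of the motives $\Yo_{G}^{s}(S_{min,max}\otimes Y)$; and $S_{min,max}$ is bounded (every subset of $S$ is bounded in the maximal bornology), so each $S_{min,max}\otimes Y$ is a bounded $G$-bornological coarse space, whence these generators already belong to the colimit-closed subcategory $G\Sp\cX_{bd}$. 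As $X$ belongs to $G\Sp\cX\langle G\Orb\otimes \Sp\cX_{bd}\rangle$ by hypothesis, it belongs to $G\Sp\cX_{bd}$, and Proposition \ref{efgijofdewfwefewfwf1} applies.

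Next I would apply Corollary \ref{erwfiowfwefewfwf1} to see that $\beta_{E,X}$ is an equivalence: its two hypotheses, that $E$ is hyperexcisive and that $X$ belongs to $G\Sp\cX\langle G\Orb\otimes \Sp\cX_{bd}\rangle$, are precisely our standing assumptions. Composing the two equivalences gives that $\sigma_{E,\cF,X}=\beta_{E,X}\circ\alpha_{E,\cF,X}$ is an equivalence. There is no genuine obstacle beyond this bookkeeping: the substance sits in Proposition \ref{efgijofdewfwefewfwf1} (the assembly comparison for bounded arguments) and Corollary \ref{erwfiowfwefewfwf1} (the Bredon-style comparison), and the only new ingredient is the elementary inclusion of generated subcategories used to reconcile the boundedness hypothesis of the former with the hypothesis on $X$.
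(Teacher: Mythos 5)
Your proposal is correct and follows the paper's own route exactly: the paper proves this corollary in a single sentence by invoking Proposition \ref{efgijofdewfwefewfwf1} and Corollary \ref{erwfiowfwefewfwf1} together with the inclusion $G\Sp\cX\langle G\Orb\otimes \Sp\cX_{bd}\rangle\subseteq G\Sp\cX_{bd}$, which you spell out (correctly) in slightly more detail.
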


 \begin{ex}\label{beoijewoigfergreree}We consider the continuous equivariant coarse topological $K$-homology functor $K\cX^{G}$ in $\Fun^{\colim}(G\Spc\cX,\Sp)$ constructed in \cite[Thm.~6.3]{bu}. We thereby  use    the  topological $K$-theory of $C^{*}$-categories from \cite[Def.~14.3]{cank} in place of the homological functor and the   $C^{*}$-category $\Hilb_{c}(\C)$  with trivial $G$-action of Hilbert spaces and compact operators   as coefficient $G$-$C^{*}$-category.
We define \begin{equation}\label{vsdfvsdfvfsvwref} KU_{r,G}\colon G\Orb\to \Sp\ , \quad S\mapsto K\cX_{G_{can,min}}^{G}(S_{min,max})\ .\end{equation}

Then we have equivalences 
$$KU_{r,G}(G/H)\simeq 
  K_{\Calg}(C_{r}^{*}(H))$$
 for all subgroups $H$ of  $G$, where   $K_{\Calg}$ is the spectrum-valued $K$-theory functor for $C^{*}$-algebras e.g.~defined as $K_{\Calg}(-):=\KK(\C,-)\colon\nCalg\to \Sp$, see Remark \ref{erthopetrghertge}.
 
 By \cite[Cor.~10.5]{bu}, we have an equivalence \begin{equation}\label{wfefwefweccsc} K\cX^{H}\simeq K\cX^{G}\circ \Ind_{H}^{G}\end{equation} of functors from $H\BC$ to $\Sp$,
 where $\Ind_{H}^{G}\colon H\BC\to G\BC$ is the induction functor for bornological coarse spaces.
 For every $S$ in $H\Orb$  we have a canonical continuous equivalence
 $$ \Ind_{H}^{G}(H_{can,min}\otimes S_{min,max})\to G_{can,min}\otimes \Ind_{H}^{G}(S)_{min,max}$$
 which on the underlying $G$-sets is given by
 $[g,(h,s)]\mapsto (gh,[g,s])$. It induces the second   in  the chain of equivalences
 \begin{equation}\label{vfvdfvsvvs}  K\cX^{H}_{H_{can,min}}\circ (-)_{min,max}\stackrel{\eqref{wfefwefweccsc}}{\simeq} ( K\cX^{G}\circ  \Ind_{H}^{G})_{H_{can,min}}(-)_{min,max}\stackrel{\simeq}{\to}  K\cX^{G}_{G_{can,min}} (\Ind_{H}^{G}(-)_{min,max})  \end{equation}of functors from $H\Orb$ to $\Sp$.
 Specializing  
  \eqref{veroi3joi4ferfreve}      we get the equivalence
 $$(K\cX_{G_{can,min}}^{G})_{H}(-)\stackrel{\eqref{vfvdfvsvvs}}{\simeq} K\cX^{H}_{H_{min,max}}\circ (-)_{min,max}\stackrel{\eqref{vsdfvsdfvfsvwref}}{\simeq} KU_{r,H}\ .$$
 The Baum-Connes conjecture for the group $H$ asserts, that the assembly map
 $\Ass_{KU_{r,H},\Fin}$   is an equivalence, where $\Fin$ denotes the family of finite subgroups.
 
 \begin{rem}
 Note that the Baum-Connes conjecture is classically formulated for the Kasparov assembly map.
 Since we now know  by \cite{kranz} that the  Kasparov assembly for the group $H$ (with trivial coefficients) is equivalent to the Davis-L\"uck assembly map $\Ass_{KU_{r,H},\Fin}$ in \eqref{dsfvfdsvfdverwvvsvdsfvsfdvsfv} we can safely consider
 the latter as the assembly map featuring the Baum-Connes conjecture. The identification of the functor 
$KU_{r,H}$ with the equivariant $K$-homology functor considered in  \cite{kranz} is discussed in \cite[Sec.~16]{bel-paschke}.
 \hrB
\end{rem}

The equivariant coarse homology theory $K\cX^{G}$ is continuous, and   consequently $K\cX_{G_{can,min}}^{G}$ is {hyperexcisive} by Lemma \ref{fewfpowef23r2}.
 The Corollary \ref{efgijofdewfwefewfwf} now has the following immediate consequence.
  \begin{kor} \label{wrtijgowrtfewrfwerfw}Assume:
  \begin{enumerate}
 \item\label{weogkpwtgewrgerfw}  All subgroups of $G$
satisfy the   Baum-Connes conjecture for trivial coefficients.
 \item $X$  belongs to $G\Sp\cX\langle G\Orb\otimes G\Sp\cX_{bd}\rangle$. \end{enumerate}
  Then
 $$\sigma_{K\cX_{G_{can,min}}^{G},\Fin,X}\colon  (K\cX^{G}_{G_{can,min}})^{\Bredon}_{\Fin}(X)\to K\cX_{G_{can,min}}^{G}(X)$$
 is an equivalence.  
 \end{kor}
 \begin{rem}
 If the Baum-Connes conjecture with coefficients holds for $G$, then all subgroups of $G$ also 
 satisfy the Baum-Connes conjecture with coefficients   \cite[Thm.~2.5]{MR1836047}. Therefore, we could get a simpler-looking statement by  replacing  Assumption \ref{wrtijgowrtfewrfwerfw}.\ref{weogkpwtgewrgerfw} by the  stronger condition
 \item[\mbox{}\hspace{0.3cm} \textit{1'}.] {\em $G$ satisfies the Baum-Connes conjecture  with coefficients.}
\hrB \end{rem}
 \heB
\end{ex}

\begin{ex}\label{rgeroij4oi3ergergregergreg} This example is completely parallel to Example \ref{beoijewoigfergreree}. Let $\bA$ be an additive  category   and $K\bA\cX^{G}$ in $\Fun^{\colim}(G\Spc\cX,\Sp)$ be the continuous equivariant coarse algebraic $K$-theory functor  with coefficients in  $\bA$ (see \cite[Def.~8.8]{equicoarse}).
The values of its twisted version are given by
$$K\bA\cX_{G_{can,min}}^{G}(G/H)\simeq K \bA^{G}(G/H)\simeq  K^{alg}(\bA[H]) \ ,$$
where $K^{alg}$ is the non-connective $K$-theory functor for additive categories, and $K \bA^{G}:G\Orb\to \Sp$ is the functor  first defined by Davis-L\"uck \cite{MR1659969}.
We again have the equivalences
$$K\bA\cX_{G_{can,min}}^{G} ( \Ind_{H}^{G}(-)_{min,max})\simeq K\bA\cX_{H_{can,min}}^{H}((-)_{min,max})$$
of functors from $H\Orb$ to $\Sp$ for all subgroups $H$ of $G$. Furthermore we can identify the functor  \eqref{veroi3joi4ferfreve} 
by
$$(K\bA\cX_{G_{can,min}}^{G})_{H}\simeq K\bA^{H}\ .$$
The {Farrell}-Jones conjecture for $H$ with {coefficients} in $\bA$ asserts that the assembly map $\Ass_{K \bA^{H},\mathcal{VC}yc}$ is an equivalence, where $ \mathcal{VC}yc$ denotes the family of virtually cyclic subgroups of $H$.

The equivariant coarse homology theory $K\bA\cX^{G}$ is continuous, and  consequently $K\bA\cX_{G_{can,min}}^{G}$ is hyperexcisive by Lemma \ref{fewfpowef23r2}. The Proposition \ref{efgijofdewfwefewfwf}  has the following immediate consequence.
\begin{kor} Assume:
  \begin{enumerate} \item
The  {Farrell}-Jones   conjecture with coefficients in $\bA$ holds  true for all  subgroups of $G$. 
\item  $X$  belongs to $G\Sp\cX\langle G\Orb\otimes G\Sp\cX_{bd}\rangle$. \end{enumerate}
Then 
 $$\sigma_{ K\bA\cX_{G_{can,min}}^{G},\mathcal{VC}yc,X}\colon  (K\bA\cX^{G}_{G_{can,min}})^{\Bredon}_{\mathcal{VC}yc}(X)\to K\bA\cX_{G_{can,min}}^{G}(X)$$
 is an equivalence. 
 \end{kor}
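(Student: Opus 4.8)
The plan is to deduce the statement directly from Corollary~\ref{efgijofdewfwefewfwf} applied to the functor $E:=K\bA\cX^{G}_{G_{can,min}}$ in $\Fun^{\colim}(G\Spc\cX,\Sp)$ and the family $\cF:=\mathbf{VCyc}$ of virtually cyclic subgroups of $G$. So the whole argument consists in verifying the three hypotheses of that corollary, after which the asserted equivalence is precisely its conclusion $\sigma_{E,\mathbf{VCyc},X}\colon E^{\Bredon}_{\mathbf{VCyc}}(X)\to E(X)$, which under the present identifications is $\sigma_{K\bA\cX^{G}_{G_{can,min}},\mathbf{VCyc},X}\colon (K\bA\cX^{G}_{G_{can,min}})^{\Bredon}_{\mathbf{VCyc}}(X)\to K\bA\cX^{G}_{G_{can,min}}(X)$.

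The first hypothesis is that $E$ be hyperexcisive: this is exactly the point recalled in the paragraph preceding the statement, namely that $K\bA\cX^{G}$ is continuous and hence its twist $E=(K\bA\cX^{G})_{G_{can,min}}$ by $G_{can,min}$ is hyperexcisive by Lemma~\ref{fewfpowef23r2}. The second hypothesis is the containment of $X$, which is literally assumption~(2); here I would only note that the object written $G\Sp\cX\langle G\Orb\otimes G\Sp\cX_{bd}\rangle$ is meant to be the subcategory of Definition~\ref{zfikgloglugilgilk}, that is $G\Sp\cX\langle G\Orb\otimes \Sp\cX_{bd}\rangle$, generated under colimits by the motives $S_{min,max}\otimes X$ with $S$ in $G\Orb$ and $X$ in $\Sp\cX_{bd}$.

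For the third hypothesis I would use the identifications recalled just before the statement: the equivalences $K\bA\cX^{G}_{G_{can,min}}\circ\Ind_{H}^{G}\simeq K\bA\cX^{H}_{H_{can,min}}$ for every subgroup $H$ of $G$, which give in particular that the functor \eqref{veroi3joi4ferfreve} is $E_{H}=(K\bA\cX^{G}_{G_{can,min}})_{H}\simeq K\bA^{H}$, the Davis--L\"uck functor \cite{MR1659969}. Since a subgroup of $H$ is virtually cyclic as an abstract group if and only if it is virtually cyclic in $H$, one has $\mathbf{VCyc}\cap H=\mathbf{VCyc}(H)$, and under the identification $E_{H}\simeq K\bA^{H}$ the coarse assembly map $\Ass_{E_{H},\mathbf{VCyc}\cap H}$ becomes the classical Farrell--Jones assembly map $\Ass_{K\bA^{H},\mathbf{VCyc}}$. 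That this is an equivalence for every subgroup $H$ of $G$ is exactly the content of assumption~(1), the Farrell--Jones conjecture with coefficients in $\bA$ for all subgroups of $G$.

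The one step that is not purely formal, and which I expect to be the main (if minor) obstacle, is this last identification of assembly maps: one must check that the colimit $\colim_{U\in H_{\mathbf{VCyc}\cap H}\Orb}E_{H}(U)\to E_{H}(*)$ built from \eqref{veroi3joi4ferfreve} agrees, under $E_{H}\simeq K\bA^{H}$, with the Davis--L\"uck assembly map against $\mathbf{VCyc}$. This follows from the naturality in $H\Orb$ of the comparison equivalences $K\bA\cX^{G}_{G_{can,min}}\circ\Ind_{H}^{G}\simeq K\bA\cX^{H}_{H_{can,min}}$ together with the fact that $K\bA^{H}$ is, by its very construction, the orbit-category functor whose $\mathbf{VCyc}$-assembly map is the Farrell--Jones map. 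Once this is in place, Corollary~\ref{efgijofdewfwefewfwf} applies verbatim and yields the claim.
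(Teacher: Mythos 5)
Your proof is correct and takes exactly the approach the paper has in mind: the corollary is stated there as an ``immediate consequence'' of Corollary \ref{efgijofdewfwefewfwf}, and you simply unwind the three hypotheses of that corollary as the paper's surrounding discussion prepares them (hyperexcisiveness via Lemma \ref{fewfpowef23r2} from continuity, the containment of $X$, and the identification $(K\bA\cX^{G}_{G_{can,min}})_{H}\simeq K\bA^{H}$ turning the required assembly-map hypothesis into the Farrell--Jones conjecture for all subgroups). You even spot and correctly resolve the small typo in the statement ($G\Sp\cX\langle G\Orb\otimes G\Sp\cX_{bd}\rangle$ should read $G\Sp\cX\langle G\Orb\otimes \Sp\cX_{bd}\rangle$), and you flag the one non-formal step — matching the colimit assembly map of \eqref{veroi3joi4ferfreve} with the Davis--L\"uck one — which the paper silently absorbs into the identification $E_{H}\simeq K\bA^{H}$.
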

   \heB
 \end{ex}

 \section{Localization for coarse homology theories}

 \subsection{The Coarse Abstract Localization Theorems}
 
 In this section we state and prove the  
  abstract localization theorems  for  Bredon-style equivariant coarse homology theories.

   Let $X$ be an object of $\Fun(G\Orb^{op},\Spc\cX)$, and let $\cF$  be a conjugation invariant set of {subgroups} of $G$.  We define  the morphism $X^{\cF}\to X$ as in Definition \ref{rgijofewrgergergerg}.

  We {let $\bC$ be a   cocomplete stable $\infty$-category and} consider a functor  $E\colon G\Orb\to \Fun^{\colim}(\Spc\cX,\bC)$. {We furthermore}  recall the Definition \ref{fuifehwieufhf23f2323rr233r2r3} of the functor $E^{G}$.

 \begin{theorem}[Coarse  Abstract Localization Theorem I]\label{fueiwfhuiwefewf23r23f}
Assume:
\begin{enumerate}
\item $\cF$ is a family of subgroups of $G$.
\item  \label{riothrhegtrgegeg}$E$ vanishes on $\cF$.
\end{enumerate}
Then the induced morphism $$E^{G}( X^{\cF})\to E^{G}(X)$$
  is an equivalence.
\end{theorem}

 \begin{proof}
 The proof is analogous to the one of Theorem \ref{wregfio324r3t3t4t}.
 The Assumption \ref{fueiwfhuiwefewf23r23f}.\ref{riothrhegtrgegeg} implies that $\Ind_{\cF^{\perp}}\Res_{\cF^{\perp}}E\to E$ is an equivalence.
 We then use the equivalence  \eqref{ewrgergrefwf} and Definition~\ref{rgijofewrgergergerg}  in order to conclude the assertion. 
 %
%
%
\end{proof}
   

 The second version of the abstract localization theorem starts with the choice of a 
 conjugacy class $\gamma$ of $G$.
Let $X$ be a $G$-bornological coarse space.
\begin{ddd} We define  the $G$-bornological coarse space $X^{\gamma}$  to be the  $G$-invariant subset
$$X^{\gamma}:=\bigcup_{g\in \gamma} X^{g}$$ of $X$ with the induced bornological coarse structures.
\end{ddd}

If $X$  is a $G$-bornological coarse space and $A$ is a $G$-invariant subset of $X$, then we {denote}
by $\{A\}:=  (U[X])_{U\in \cC^{G}}$ the big family generated by $A$, i.e.,   the family $(U[X])_{U\in \cC^{G}}$ of $U$-thickenings of $A$ for all $G$-invariant coarse entourages $U$ of $X$.  If $F$ is a functor defined on  $G\BC$ with a cocomplete target, then we define $F(\{A\}):=\colim_{A^{\prime}\in \{A\}} F(A^{\prime})$.  We have   natural morphisms \begin{equation}\label{rrgkegoergerggerwtrger} F(A)\to
F(\{A\})\to F(X) \ .
\end{equation}

%

We consider a cocomplete stable $\infty$-category $\bC$, a functor  $E\colon G\Orb\to \Fun^{\colim}(\Spc\cX,\bC)$, and we let $X$ be a  $G$-bornological coarse space.
We furthermore let  $\gamma$ be a conjugacy class of $G$. Recall   the functor
$\tilde Y\colon G\BC\to \Fun(G\Orb^{op},\Spc\cX)$ {defined as the composition  \eqref{f34fpo4fr3fg3f}.}
\begin{theorem}[Coarse Abstract Localization Theorem II]\label{rgioewfwefewfewfewf} Assume: 
\begin{enumerate}
\item\label{qkrgkoewgjwoirgrewfwrfw} $\Yo^{s}_{G}(X^{\gamma})\to \Yo^{s}_{G}(\{X^{\gamma}\}) $ is an equivalence.
\item $E$ vanishes on  $\cF(\gamma)$.
\end{enumerate}
Then the  map
$$E^G( X^{\gamma} )\to E^{G}( X )$$  induced by the inclusion $X^{\gamma}\to X$ is an equivalence.
\end{theorem}
\begin{proof}
We prepare  the proof of  the theorem 
{with the following  two} 
 lemmas.   
\begin{lem}\label{3rgorg34g34g111} {If  $X^{\gamma}=\emptyset$, then 
we have
$\tilde Y(X)^{\cF(\gamma)}\simeq \emptyset$.} 
\end{lem}
\begin{proof}
Assume that $X^{\gamma}=\emptyset$. If $H$ is in $\cF(\gamma)^{\perp}$, then there exists $h$ in $H\cap \gamma$. But then
$X^{H}\subseteq  X^{h}\subseteq X^{\gamma}$, i.e., $ X^{H}=\emptyset$. This implies that $\tilde Y(X)(G/H)\simeq \emptyset$. 
We conclude that
$\Res_{\cF(\gamma)^{\perp}}\tilde Y(X)\simeq \emptyset$ and hence $\tilde Y(X)^{\cF(\gamma)}\simeq \emptyset$ by  Definition \ref{rgijofewrgergergerg}.

%
\end{proof}
%
%
\begin{lem}\label{fuhi23f23f23f32f2f}
If $X^{\gamma}=\emptyset$ and $E$ vanishes on
$\cF(\gamma)$, then
$E^{G}(X)\simeq 0$.
\end{lem}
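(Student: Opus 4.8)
The plan is to follow, essentially verbatim, the argument of the topological counterpart Proposition~\ref{foief223ff2f}, with the Abstract Localization Theorem~I replaced by its coarse version Theorem~\ref{fueiwfhuiwefewf23r23f} and Lemma~\ref{3rgorg34g34g1} replaced by its coarse analogue Lemma~\ref{3rgorg34g34g111}.

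First I would record, using Lemma~\ref{rgiorgergergg}, that $\cF(\gamma)$ is a family of subgroups of $G$. Combined with the hypothesis that $E$ vanishes on $\cF(\gamma)$, this places us exactly in the situation of the Coarse Abstract Localization Theorem~I~\ref{fueiwfhuiwefewf23r23f}, applied to the object $\tilde Y(X)$ of $\Fun(G\Orb^{op},\Spc\cX)$ and the conjugation invariant family $\cF(\gamma)$. That theorem then produces an equivalence $E^{G}(\tilde Y(X)^{\cF(\gamma)})\xrightarrow{\ \simeq\ }E^{G}(\tilde Y(X))$ induced by the canonical map $\tilde Y(X)^{\cF(\gamma)}\to\tilde Y(X)$ of Definition~\ref{rgijofewrgergergerg}.

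Next I would feed in the hypothesis $X^{\gamma}=\emptyset$. By Lemma~\ref{3rgorg34g34g111} this is equivalent to $\tilde Y(X)^{\cF(\gamma)}\simeq\emptyset$, i.e.\ $\tilde Y(X)^{\cF(\gamma)}$ is the initial object of $\Fun(G\Orb^{op},\Spc\cX)$ (the functor constant at the initial object $\emptyset$ of $\Spc\cX$). It then remains to observe that $E^{G}$ sends this initial object to the zero object of $\bC$: by Definition~\ref{fuifehwieufhf23f2323rr233r2r3} the functor $E^{G}(-)=\int^{G\Orb}E\circ(-)$ is built from the colimit-preserving functors $E(S)(-)$ by forming a coend, and coends preserve colimits, so $E^{G}$ preserves colimits, in particular the empty one. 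Chaining the two equivalences gives $E^{G}(\tilde Y(X))\simeq E^{G}(\tilde Y(X)^{\cF(\gamma)})\simeq 0$, as claimed.

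I do not anticipate a genuine obstacle here: all the substantial content has been isolated in the preceding lemmas, in particular in Lemma~\ref{3rgorg34g34g111}, whose proof is where the motivic formalism and the fact (Lemma~\ref{geroijioerg43ergergre3rgteg}) that $\Yo$ detects the empty space are used. The only things worth double-checking are the two pieces of bookkeeping mentioned above --- that the equivalence supplied by Lemma~\ref{3rgorg34g34g111} really identifies $\tilde Y(X)^{\cF(\gamma)}$ with the initial object of the functor category, and that $E^{G}$ is colimit-preserving in its argument --- and both are immediate from the definitions.
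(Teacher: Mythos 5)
Your proposal is correct and matches the paper's own proof, which simply says the lemma is "an immediate consequence of Theorem~\ref{fueiwfhuiwefewf23r23f} and Lemma~\ref{3rgorg34g34g111}." You have just spelled out the two implicit steps (that $\cF(\gamma)$ is a family, and that $E^{G}$ preserves colimits and hence sends the initial object to $0$), which is exactly how the reader is meant to fill in the details.
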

\begin{proof}
This is an immediate consequence of Theorem \ref{fueiwfhuiwefewf23r23f} and Lemma  \ref{3rgorg34g34g111}.
\end{proof}

%
%
We consider the complementary pair
$(X\setminus X^{\gamma},\{X^{\gamma}\})$.
By excision for the equivariant coarse homology theory $E^{G}\circ \tilde Y$ (see Corollary \ref{gih42iuhjfierfwfef} and \cite[Def.~3.10]{equicoarse}) we have the {push-out square \begin{equation}\label{weedewfdwesdcsdcsewdwedd}
\xymatrix{E^{G}( \{X^{\gamma}\}\cap (X\setminus X^{\gamma}) )\ar[r]\ar[d]&E^{G}( \{X^{\gamma}\} )\ar[d]\\E^{G}( X\setminus X^{\gamma})\ar[r]&E^{G}( X)}\ .
\end{equation}}
By {Lemma} \ref{fuhi23f23f23f32f2f} we have
$E^{G}( X\setminus X^{\gamma})\simeq 0$ {and $E^{G}( \{X^{\gamma}\}\cap (X\setminus X^{\gamma}))\simeq 0$}. 
{Hence the right vertical map in \eqref{weedewfdwesdcsdcsewdwedd} is an equivalence.} By Assumption  \ref{rgioewfwefewfewfewf}.\ref{qkrgkoewgjwoirgrewfwrfw}   we have an equivalence
$E^{G}(  X^{\gamma}) \simeq E^{G}( \{X^{\gamma}\})$ induced by the inclusions of $X^{\gamma}$ into   the members of $\{Y^{\gamma}\}$.
Both facts together imply the assertion of the theorem.
\end{proof}

If in Theorem \ref{rgioewfwefewfewfewf} we drop  Assumption  \ref{rgioewfwefewfewfewf}.\ref{qkrgkoewgjwoirgrewfwrfw}, then the same argument still proves the following version:
\begin{theorem}[Coarse Abstract Localization Theorem II${}^{\prime}$]\label{rgioewfwefewfewfewfppp} If  
 $E$ vanishes on $\cF(\gamma)$, then
  the second   natural morphism  in \eqref{rrgkegoergerggerwtrger}
$$E^G( \{X^{\gamma}\} )\to E^{G}( X)$$ is an equivalence.
\end{theorem}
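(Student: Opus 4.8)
The plan is to reuse the excision argument from the proof of Theorem \ref{rgioewfwefewfewfewf}, but to stop one step earlier, before the niceness of $X^{\gamma}$ would be invoked. First I would note that $(X\setminus X^{\gamma},\{X^{\gamma}\})$ is a complementary pair on $X$: one has $(X\setminus X^{\gamma})\cup X^{\gamma}=X$ with $X^{\gamma}\in\{X^{\gamma}\}$. Since $E^{G}\circ\tilde Y$ is an equivariant coarse homology theory by Corollary \ref{gih42iuhjfierfwfef}, excision yields the push-out square
$$\xymatrix{E^{G}(\tilde Y(\{X^{\gamma}\}\cap (X\setminus X^{\gamma})))\ar[r]\ar[d]&E^{G}(\tilde Y(\{X^{\gamma}\}))\ar[d]\\ E^{G}(\tilde Y(X\setminus X^{\gamma}))\ar[r]&E^{G}(\tilde Y(X))}\ .$$

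Next I would show that the two objects in the left column vanish. By construction no point of $X\setminus X^{\gamma}$ is fixed by any element of $\gamma$, so $(X\setminus X^{\gamma})^{\gamma}=\emptyset$, and likewise $(A'\cap(X\setminus X^{\gamma}))^{\gamma}=\emptyset$ for every member $A'$ of the big family $\{X^{\gamma}\}$, since $A'\cap(X\setminus X^{\gamma})\subseteq X\setminus X^{\gamma}$. Lemma \ref{fuhi23f23f23f32f2f} (which applies because $E$ vanishes on $\cF(\gamma)$ by hypothesis) then gives $E^{G}(\tilde Y(X\setminus X^{\gamma}))\simeq 0$, and, after taking the filtered colimit over the members of $\{X^{\gamma}\}$ in the definition of $F(\{\,\cdot\,\})$, also $E^{G}(\tilde Y(\{X^{\gamma}\}\cap(X\setminus X^{\gamma})))\simeq 0$.

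Finally, since its left vertical arrow is an equivalence of zero objects, the right vertical arrow $E^{G}(\tilde Y(\{X^{\gamma}\}))\to E^{G}(\tilde Y(X))$ of the push-out square is an equivalence; this is exactly the natural morphism of \eqref{rrgkegoergerggerwtrger}, which proves the theorem. There is no real obstacle beyond what is already contained in Lemma \ref{fuhi23f23f23f32f2f}; the only point requiring a line of care is the observation that the members of $\{X^{\gamma}\}\cap(X\setminus X^{\gamma})$ are subspaces of $X\setminus X^{\gamma}$ and hence have empty set of $\gamma$-fixed points, so that the hypothesis of Lemma \ref{fuhi23f23f23f32f2f} is met termwise.
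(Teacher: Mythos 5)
Your proposal is correct and follows exactly the route the paper itself indicates: run the excision argument from Theorem \ref{rgioewfwefewfewfewf} with the complementary pair $(X\setminus X^{\gamma},\{X^{\gamma}\})$, kill the left column via Lemma \ref{fuhi23f23f23f32f2f}, and stop before invoking niceness. You even supply the small detail the paper leaves implicit, namely that the vanishing of $E^{G}(\tilde Y(\{X^{\gamma}\}\cap(X\setminus X^{\gamma})))$ is obtained termwise over the big family and then by passing to the filtered colimit.
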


We now consider a $\bC$-valued equivariant coarse homology theory, i.e., a  functor  $E $ in $ \Fun^{\colim}(G\Spc\cX,\bC)$.
As a corollary  of Theorem \ref{rgioewfwefewfewfewf} we obtain its  version for the associated Bredon-style
equivariant coarse homology of $E$. Recall the Definition {\ref{etgioegggergergergeg}} of the functor~$\underline{E}$.
Let  $X$ be a  $G$-bornological coarse space and 
   $\gamma$ be a conjugacy class of $G$. 
   
   \begin{kor}\label{gerhgiu34hiu34huferererg}
   Assume:
   \begin{enumerate}
\item $\Yo^{s}_{G}(X^{\gamma})\to \Yo^{s}_{G}(\{X^{\gamma}\}) $ is an equivalence.
\item $\underline{E}$ vanishes on $\cF(\gamma)$.
\end{enumerate}
Then the  morphism
$$ E^{\Bredon}(   X^{\gamma}  )\to E^{\Bredon}( X )$$  induced by the inclusion $X^{\gamma}\to X$ is an equivalence.
 \end{kor}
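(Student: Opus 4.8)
The plan is to deduce the statement directly from the Coarse Abstract Localization Theorem~II (\ref{rgioewfwefewfewfewf}), applied not to $E$ itself but to the functor $\underline{E}$ attached to $E$ in Definition~\ref{etgioegggergergergeg}. The point is that $\underline{E}\colon G\Orb\to \Fun^{\colim}(\Spc\cX,\bC)$ is precisely a functor of the type to which Theorem~\ref{rgioewfwefewfewfewf} applies, and by Definition~\ref{rgfrihgioffwfwfwefewf} the associated Bredon-style theory is $E^{\Bredon}=\underline{E}^{G}\circ Y$. So the whole argument is a matter of matching notational conventions.

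First I would record the identification
$$E^{\Bredon}(\Yo_{G}(W))=\underline{E}^{G}(Y(\Yo_{G}(W)))=\underline{E}^{G}(\tilde Y(W))$$
for every $G$-bornological coarse space $W$, where the first equality is Definition~\ref{rgfrihgioffwfwfwefewf} and the second uses that $\tilde Y=Y\circ\Yo_{G}$ (Definition~\ref{rgiorgefewfwefwef}, see also \eqref{f34fpo4fr3fg3f}). Applying this with $W=X^{\gamma}$ and with $W=X$, and using functoriality of all the constructions involved in the inclusion $X^{\gamma}\to X$, the morphism in the statement is identified with the morphism
$$\underline{E}^{G}(\tilde Y(X^{\gamma}))\to \underline{E}^{G}(\tilde Y(X))$$
induced by $X^{\gamma}\to X$, which is exactly the morphism appearing in Theorem~\ref{rgioewfwefewfewfewf} with $\underline{E}$ playing the role of the functor called $E$ there.

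It then remains only to verify the two hypotheses of Theorem~\ref{rgioewfwefewfewfewf} for $\underline{E}$: that $X^{\gamma}$ is a nice subset of $X$, which is assumed in the corollary, and that $\underline{E}$ vanishes on $\cF(\gamma)$ (in the sense of Definition~\ref{fkjkjafsljashfgfasljh}, applied to the stable cocomplete $\infty$-category $\Fun^{\colim}(\Spc\cX,\bC)$), which is also assumed in the corollary. With both in hand, Theorem~\ref{rgioewfwefewfewfewf} yields that the displayed morphism is an equivalence, which is the assertion.

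I do not expect any genuine obstacle: the corollary is a pure repackaging of Theorem~\ref{rgioewfwefewfewfewf}, and the only care needed is to be explicit that ``$\underline{E}$ vanishes on $\cF(\gamma)$'' means $\underline{E}(S)\simeq 0$ as an object of $\Fun^{\colim}(\Spc\cX,\bC)$ (equivalently, $E(S_{min,max}\otimes Z)\simeq 0$ for all $S$ in $G_{\cF(\gamma)}\Orb$ and all $Z$), and that all the comparison morphisms track the inclusion $X^{\gamma}\to X$ compatibly.
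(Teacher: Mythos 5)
Your proposal is correct and matches the paper's (implicit) reasoning exactly: the corollary is indeed obtained by applying Theorem~\ref{rgioewfwefewfewfewf} to the functor $\underline{E}$ and unwinding the identification $E^{\Bredon}\circ\Yo_{G}=\underline{E}^{G}\circ\tilde Y$ coming from Definition~\ref{rgfrihgioffwfwfwefewf} and $\tilde Y=Y\circ\Yo_{G}$. The paper states this as an immediate corollary without spelling out the details, and your unwinding of the hypotheses and notation is the intended argument.
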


Similarly, from Theorem \ref{rgioewfwefewfewfewfppp} we get:
 \begin{kor}  
  If $\underline{E}$ vanishes on $\cF(\gamma)$, then
  the  morphism
$$ E^{\Bredon}(   \{X^{\gamma}\}  )\to E^{\Bredon}( X )$$  is an equivalence.
 \end{kor}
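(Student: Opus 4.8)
The plan is to deduce this directly from the Coarse Abstract Localization Theorem II$'$ (\ref{rgioewfwefewfewfewfppp}), applied not to $E$ itself but to the functor $\underline{E}\colon G\Orb\to\Fun^{\colim}(\Spc\cX,\bC)$ of Definition \ref{etgioegggergergergeg}. The key observation is that all the substantive content is already packaged in Theorem \ref{rgioewfwefewfewfewfppp}; what remains is a translation of notation.

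First I would record that translation. By Definition \ref{rgfrihgioffwfwfwefewf} we have $E^{\Bredon}=\underline{E}^{G}\circ Y$, and by construction of the motivic orbit functor $Y$ (Definition \ref{rgiorgefewfwefwef}) we have $\tilde Y=Y\circ\Yo_{G}$, so that $E^{\Bredon}\circ\Yo_{G}\simeq\underline{E}^{G}\circ\tilde Y$ as functors $G\BC\to\bC$; by Corollary \ref{gih42iuhjfierfwfef} this functor is a $\bC$-valued equivariant coarse homology theory. Using the convention $F(\{A\}):=\colim_{A'\in\{A\}}F(A')$ for functors $F$ on $G\BC$, it follows that
$$E^{\Bredon}(\Yo_{G}(\{X^{\gamma}\}))=\colim_{A'\in\{X^{\gamma}\}}E^{\Bredon}(\Yo_{G}(A'))\simeq\colim_{A'\in\{X^{\gamma}\}}\underline{E}^{G}(\tilde Y(A'))=\underline{E}^{G}(\tilde Y(\{X^{\gamma}\}))$$
and $E^{\Bredon}(\Yo_{G}(X))\simeq\underline{E}^{G}(\tilde Y(X))$, and moreover the morphism appearing in the Corollary corresponds, under these equivalences, to the natural morphism $\underline{E}^{G}(\tilde Y(\{X^{\gamma}\}))\to\underline{E}^{G}(\tilde Y(X))$ of \eqref{rrgkegoergerggerwtrger}, both being the canonical map out of the filtered colimit over $\{X^{\gamma}\}$ induced by the inclusions of the members of $\{X^{\gamma}\}$ into $X$.

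Next I would apply Theorem \ref{rgioewfwefewfewfewfppp} with $E$ there replaced by $\underline{E}$. Its sole hypothesis is that $\underline{E}$ vanishes on $\cF(\gamma)$, which is precisely the hypothesis of the present Corollary; recall also that $\cF(\gamma)$ is a family of subgroups of $G$ by Lemma \ref{rgiorgergergg}, so the inputs feeding that theorem (excision for $\underline{E}^{G}\circ\tilde Y$ via Corollary \ref{gih42iuhjfierfwfef}, together with the vanishing Lemma \ref{fuhi23f23f23f32f2f}) are in force. Theorem \ref{rgioewfwefewfewfewfppp} then gives that $\underline{E}^{G}(\tilde Y(\{X^{\gamma}\}))\to\underline{E}^{G}(\tilde Y(X))$ is an equivalence, and the identifications of the previous paragraph turn this into the assertion of the Corollary. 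I do not expect any genuine obstacle: the statement is a formal corollary, and the only point requiring (routine) care is the bookkeeping verification that the ``natural morphism from the big family'' built into Theorem \ref{rgioewfwefewfewfewfppp} agrees with the one referenced in the statement, which is immediate from their common description as the canonical map out of $\colim_{A'\in\{X^{\gamma}\}}$.
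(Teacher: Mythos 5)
Your proof is correct and is precisely the argument the paper intends (the paper gives no explicit proof, only the phrase ``Similarly, from Theorem~\ref{rgioewfwefewfewfewfppp} we get:''). Applying Theorem~\ref{rgioewfwefewfewfewfppp} to $\underline{E}$ and unwinding $E^{\Bredon}\circ\Yo_{G}\simeq\underline{E}^{G}\circ\tilde Y$ together with the big-family convention is exactly the intended derivation.
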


\subsection{Localization for equivariant coarse topological $K$-homology}\label{greiuhiufhiwefwefewfewf2r323r}
 
In this section we show how the Abstract Coarse  Localization Theorem II \ref{rgioewfwefewfewfewf} and its Corollary~\ref{gerhgiu34hiu34huferererg}, in particular, can be applied in the case of the equivariant coarse  topological $K$-homology $K\cX^{G}$ from  \cite[Thm.~6.3]{bu} discussed in Example~\ref{beoijewoigfergreree}. 
Note that the category $G\BC$
has a symmetric monoidal structure denoted by $\otimes$.     The category $G\Spc\cX$ {has} a symmetric monoidal structure essentially uniquely determined by the property that
the functor $\Yo\colon G\BC\to G\Spc\cX$ has a  symmetric monoidal refinement \cite[Lem.~4.16]{equicoarse}. 
On $\Sp$ we consider the symmetric monoidal structure given by the wedge product.

\begin{prop}The functor $K\cX^{G}\colon G\BC\to \Sp$ has a lax symmetric monoidal refinement.\end{prop}
\begin{proof}

Recall that this functor is defined as the composition 
$$K\cX^{G}:G\BC\xrightarrow{\bV^{G}}  C^{*}\mathbf{Cat}\xrightarrow{\mathrm{kk}_{\Ccat}} \KK\xrightarrow{{\KK}(\C,-)}\Sp\ .$$ 
 The functor $\mathrm{kk}_{\Ccat}$ defined in \cite[Def. 1.29]{KKG}
has a symmetric monoidal refinement by \cite[Thm.~1.35]{KKG}.
The functor $\KK(\C,-)$ has a lax symmetric monoidal refinement  since $\C$, being the tensor unit of $\KK$, has the structure of a cocommutative coalgebra object. It thus suffices to construct a lax symmetric monoidal refinement of the functor $\bV^{G}$\footnote{In the notation of \cite{bu} this is the functor $\bar \bC^{G,\mathrm{crt}}_{\mathrm{lf}}$ from \cite[Def.~4.19.2]{bu} for $\bC=\Hilb_{c}(\C)$ with the trivial $G$-action. } which sends~$X$ in $G\BC$ to the $C^{*}$-category of $X$-controlled $G$-objects in $\Hilb_{c}(\C)$.

Since tensor products of Hilbert spaces are only defined up to canonical isomorphisms we
actually only get a symmetric monoidal refinement of the composition
$$\bV^{G}_{\infty}:G\BC\xrightarrow{\bV^{G}}  C^{*}\mathbf{Cat}  \xrightarrow{\ell}  C^{*}\mathbf{Cat}_{\infty}\ ,$$ 
where $\ell$ presents  the localization of $C^{*}\mathbf{Cat}$  at unitary the equivalences. But this suffices in view of the commutative  diagram
$$\xymatrix{& C^{*}\mathbf{Cat}\ar[dd]^{\ell}\ar[dr]^{\mathrm{kk}_{\Ccat}}&\\G\BC \ar[ur]^{\bV^{G}}  \ar@{-->}[dr]_{\bV^{G}_{\infty}}&&\KK\\ & C^{*}\mathbf{Cat}_{\infty}\ar@{..>}[ur]_{\mathrm{kk}_{\Ccat,\infty}}&}\ .$$
The localization $\ell$ admits a symmetric monoidal refinement since the tensor product  with a fixed $C^{*}$-category preserves unitary equivalences. 
The  factorization  $ \mathrm{kk}_{\Ccat,\infty}$ exists and admits a symmetric monoidal refinement since 
$\mathrm{kk}_{\Ccat}$ sends unitary equivalences to equivalences by \cite[Thm.~1.32.2]{KKG}.

In order to construct the symmetric monoidal refinement of $\bV_{\infty}^{G}$ in a coherent way we proceed as in the algebraic situation considered in  \cite[Sec.~3.4]{symm}. 
At this point we only discuss some analytic details of the tensor product of objects and morphisms which are not present in the algebraic case.

Let $X,X'$ be in $G\BC$ and consider objects 
 $(H,\rho,\mu)$  in $\bV^{G}(X)$ and $(H',\rho',\mu')$ in $\bV^{G}(X')$, see \cite[Sec.~4]{bu} for notation.
 Here $H,H'$ are Hilbert spaces, $\rho,\rho'$ are unitary representations of $G$ on the respective Hilbert spaces, and $\mu,\mu'$ are projection-valued measures.
 Then $(H,\rho,\mu)\otimes (H',\rho',\mu')$ in $\bV^{G}(X\otimes X')$  is given by
 $(H\otimes H',\rho\otimes \rho',\mu\otimes \mu')$, where $H\otimes H'$ is the tensor product of Hilbert spaces. In the following we explain how to interpret the  
projection-valued measure $\mu\otimes \mu'$ on all subsets  $X\times X'$ (and not just on subsets of the form $A\times A'$).
Since by assumption
 $H\cong \bigoplus_{x\in X} \mu(\{x\})H$ and $H'\cong \bigoplus_{x'\in X'} \mu'(\{x'\})H'$
 (the sums are interpreted in the sense of Hilbert spaces) we have an isomorphism 
 $H\otimes H'\cong \bigotimes_{(x,x')\in X\times X'} \mu(\{x\})H\otimes \mu'(\{x'\})H'$.
 Using this isomorphism we can define $(\mu\otimes \mu)(A):=\sum_{(x,x')\in A} \mu(\{x\})\otimes \mu'(\{x'\})$, where the sum converges in the  strong topology.  
 
 A morphism  $f\colon (H_{0},\rho_{0},\mu_{0})\to (H_{1},\rho_{1},\mu_{1})$ 
  in $\bV^{G}(X)$ is a bounded operator  $H_{0}\to H_{1}$   which can be approximated uniformly by  equivariant, locally compact and  controlled propagation operators.  If  
  $f'\colon (H'_{0},\rho'_{0},\mu'_{0})\to (H'_{1},\rho'_{1},\mu'_{1})$ is a morphism in $\bV^{G}(X')$, then one checks that 
  $f\otimes f'$ is indeed a morphism  in  $\bV^{G}(X\otimes X')$.

 The unit $\C\to \bV^{G}(*)$ is the functor  which sends the unique object of $\C$ (considered as a $C^{*}$-category with  single object) to $(\C,\mathrm{triv},\nu)$, where $\mathrm{triv}$ is the trivial $G$-representation and $\nu$ is the unique choice of a projection valued measure.
     	\end{proof}

 
Since the tensor unit  $*$ in $G\BC$ is a commutative algebra  object  it follows that $R:=K\cX^{G}(*)$ is a commutative algebra in $\Sp$. 
Moreover, for every object $T$ in $G\Sp\cX$ the $T$-twist $K\cX^{G}_{T}$ of  the functor  $K\cX^{G}$ naturally refines to an equivariant  $\Mod(R)$-valued coarse homology  theory, or equivalently, to 
an element of $\Fun^{\colim}(G\Spc\cX, \Mod(R))$ which  will still be denoted by the same symbol   $K\cX_{T}^{G}$.

In order to calculate  the ring $\pi_{0}(R)$ we observe that
 $\bV^{G}(*)$ is equivalent to  the $C^{*}$-category of finite-dimensional unitary representations of $G$ and equivariant  linear maps. This category is semisimple and generated by the irreducible finite-dimensional unitary representations of $G$. The ring structure on $R$ is induced by the tensor product of finite-dimensional unitary representations of $G$. The representation ring $R(G)$ of $G$ is defined   as the ring-completion of the semiring of isomorphism classes of finite-dimensional unitary representations of $G$.
It then follows that $\pi_{0}(R)\cong R(G)$.

From now on  we assume that $G$ is finite. 

Let $\gamma$ be a conjugacy class of $G$ and $(\gamma)$ in $R(G)$ be the corresponding ideal, see Definition~\ref{rgiogerg43t34t43t}.  Using the isomorphism $\pi_{0}(R)\cong R(G)$ we can form the localizations $R_{(\gamma)}$ of the ring spectrum $R$  and 
$$K\cX^{G}_{T,(\gamma)}:=K\cX^{G}_{T}{\otimes}_{R}R_{(\gamma)}$$
of the $R$-module spectrum valued functor $K\cX_{T}^{G}$.
Recall the Definition \ref{ioujiowrgrgrergergr} of the family of subgroups $F(\gamma)$ and Definition \ref{etgioegggergergergeg} of $\underline{K\cX^{G}_{T,(\gamma)}}$.
 
\begin{prop}\label{rgioefewfewfewf}
$\underline{K\cX^{G}_{T,(\gamma)}}$ vanishes on $\cF(\gamma)$.
\end{prop}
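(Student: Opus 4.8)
The plan is to run the coarse analogue of the proof of the topological Segal lemma \ref{wfiowefwefewfewfw}. Since $\underline{K\cX^{G}_{T,(\gamma)}}(S)$ is, by Definition \ref{etgioegggergergergeg}, a colimit-preserving functor $\Spc\cX\to\Mod(R_{(\gamma)})$ and $\Spc\cX$ is generated under colimits by the motives $\Yo(W)$ of bornological coarse spaces, it suffices to show $K\cX^{G}_{T,(\gamma)}((G/H)_{min,max}\otimes X)\simeq 0$ for every subgroup $H\in\cF(\gamma)$, where $S=G/H$, and every $X$ in $G\Spc\cX$ of the form $\Yo_{G}(W)$ with $W\in\BC$ (carrying the trivial $G$-action). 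As in the topological case we exhibit an element of $R(G)$ that acts on this module both invertibly and as zero.

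By \cite{segal}, since $H\cap\gamma=\emptyset$ there is $\eta\in R(G)$ with $\eta_{|H}=0$ and $\Tr\,\eta(g)\neq 0$ for all $g$ in $\gamma$. The second condition says exactly that $\eta\notin(\gamma)$, so $\eta$ maps to a unit in $\pi_{0}(R_{(\gamma)})\cong R(G)_{(\gamma)}$. Consequently multiplication by $\eta$ is an equivalence on every $R_{(\gamma)}$-module, in particular on $K\cX^{G}_{T,(\gamma)}(Y)$ for any $Y$.

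The main point is a natural equivalence exhibiting the $R$-module structure on $K\cX^{G}$ of an $(G/H)$-induced space as restricted along the ring map $R=K\cX^{G}(*)\to K\cX^{H}(*)$, which on $\pi_{0}$ is the restriction homomorphism $R(G)\to R(H)$ of representation rings. For $Z$ in $G\BC$ one inspects the $C^{*}$-category $\bV^{G}((G/H)_{min,max}\otimes Z)$: since $(G/H)_{min,max}$ carries the minimal coarse structure and $G$ acts transitively on $G/H$, such a controlled Hilbert space is equivariantly determined by its component over $eH$, which is an $H$-equivariant $Z$-controlled Hilbert space. This gives a natural equivalence $\bV^{G}((G/H)_{min,max}\otimes Z)\simeq\bV^{H}(Z)$ (with $Z$ regarded as an $H$-bornological coarse space) under which the tensor action of $\bV^{G}(*)$ corresponds to tensoring with the restriction to $H$; applying $K^{top}$ yields a natural equivalence of $R$-modules $K\cX^{G}((G/H)_{min,max}\otimes Z)\simeq K\cX^{H}(Z)$ with $R$ acting on the right via $R\to K\cX^{H}(*)$, the module refinement of the induction identity already used in Example \ref{beoijewoigfergreree}. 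Both sides factor through $\Yo_{G}$ into colimit-preserving functors $G\Spc\cX\to\Mod(R)$, so the equivalence extends to all $Z$ in $G\Spc\cX$, and then to $G\Sp\cX$ by stabilization. Finally, for the twist one uses symmetry of the monoidal structure to rewrite $K\cX^{G}_{T}((G/H)_{min,max}\otimes X)\simeq K\cX^{G}((G/H)_{min,max}\otimes(T\otimes X))$ with $T\otimes X$ in $G\Sp\cX$, reducing to the case just treated.

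It follows that multiplication by $\eta$ on $K\cX^{G}_{T}((G/H)_{min,max}\otimes X)$ is multiplication by the image of $\eta$ in $R(H)$, namely $\eta_{|H}=0$, hence nullhomotopic; base-changing along $R\to R_{(\gamma)}$ it stays nullhomotopic on $K\cX^{G}_{T,(\gamma)}((G/H)_{min,max}\otimes X)$. By the second paragraph this same self-map is an equivalence, so $\id$ on that module is nullhomotopic and the module vanishes. Hence $\underline{K\cX^{G}_{T,(\gamma)}}(G/H)\simeq 0$ for all $H\in\cF(\gamma)$, i.e. $\underline{K\cX^{G}_{T,(\gamma)}}$ vanishes on $\cF(\gamma)$. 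I expect the only substantial step to be the $C^{*}$-categorical identification $\bV^{G}((G/H)_{min,max}\otimes Z)\simeq\bV^{H}(Z)$ together with its compatibility with the $\bV^{G}(*)$-module structure and its propagation to $G\Sp\cX$; everything else is the formal Segal argument and the localization formalism already in place.
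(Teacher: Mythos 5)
Your argument is correct and reaches the same conclusion, but the key lemma is genuinely different from the paper's. The paper stays entirely inside $\bV^{G}\bigl((G/K)_{min,max}\otimes X\bigr)$ and, given a unitary $K$-isomorphism $W\colon (V^{+},\pi^{+})_{|K}\to (V^{-},\pi^{-})_{|K}$, writes down an explicit $\diag(G/K\times X)$-controlled equivariant unitary
$$\tilde W_{(H,\phi,\rho)}:=\sum_{gK\in G/K}\pi^{-}(g)W\pi^{+}(g^{-1})\otimes \phi(\{gK\}\times X)$$
giving a natural isomorphism between the endofunctors $(V^{+},\pi^{+})\otimes(-)$ and $(V^{-},\pi^{-})\otimes(-)$. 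This directly yields that classes in $R(G)$ with $\rho_{|K}=0$ act as zero on $\pi_{*}K\cX^{G}((G/K)_{min,max}\otimes X)$, without ever needing to describe the whole $C^{*}$-category. You instead prove a stronger structural statement: an induction/restriction equivalence of $C^{*}$-categories $\bV^{G}((G/H)_{min,max}\otimes Z)\simeq\bV^{H}(\Res^{G}_{H}Z)$ compatible with the $\bV^{G}(*)$-module structures, and then read off that $R(G)$ acts through $R(G)\to R(H)$. Your equivalence is correct in the situation at hand, but note that it uses the standing finiteness assumption on $G$ in an essential way: local compactness over $(G/H)_{min,max}\otimes Z$ involves the \emph{maximal} bornology on $G/H$, so induction from $\bV^{H}(Z)$ would not land in $\bV^{G}((G/H)_{min,max}\otimes Z)$ if $G/H$ were infinite; the paper's construction of $\tilde W$ sidesteps this by never asserting essential surjectivity. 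So both routes work here, the paper's being more minimal (one natural isomorphism) and more robust, yours being more conceptual (it identifies $K\cX^{G}((G/H)_{min,max}\otimes -)$ with $K\cX^{H}$ as $R$-modules, echoing Example \ref{beoijewoigfergreree}) but requiring the full category equivalence, which you state but do not verify. The remaining steps — Segal's element $\eta$ with $\eta_{|H}=0$ and $\eta$ a unit in $R(G)_{(\gamma)}$, forcing the identity of the localized module to be nullhomotopic, and the reduction from $T$-twisted to untwisted via colimit-preservation — agree with the paper.
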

\begin{proof} 
In view of \eqref{g54ljklefwefewfwf} it suffices {to} show that
$$K\cX^{G}_{(\gamma)}((G/K)_{min,max}\otimes X)\simeq 0$$ for every $K$  in $\cF(\gamma)$ and $X$ in $G\BC$. This implies the result by setting $X=T\otimes Y$ for $Y$ in $\BC$.

We must explicitly understand the $R$-module structure of $K\cX^{G}(Z)$ for $Z$ in $G\BC$. It is induced by   the  functor between $C^{*}$-categories 
$$\bV^{G}(*)\otimes  \bV^{G}(Z)\to \bV^{G}(Z)$$
which sends 
$((V,\pi,\nu),(H,\rho,\mu))$ to $(V\otimes H, \pi\otimes \rho,\nu\otimes \mu )$. Here
$(V,\pi,\nu) $ in $\bV^{G}(*)$  is a finite-dimensional unitary representation of $G$ with the canonical measure $\nu$, and $(H,\rho,\mu)$ is in $\bV^{G}(Z)$.
This construction extends to morphisms in the natural way. In particular, for every $(V,\pi)$
we have   a functor between $C^{*}$-categories
$$(V,\pi)\otimes (-) : \bV^{G}((G/K)_{min,max}\otimes X)\to \bV^{G}((G/K)_{min,max}\otimes X)\ .$$

Assume now that $(V^{\pm},\pi^{\pm})$ are two finite-dimensional unitary representations of $G$ such that there exists a unitary isomorphism $W:(V^{+},\pi^{+})_{|K}\to (V^{-},\pi^{-})_{|K}$.
Then we define a $\diag(G/K\times X)$-controlled equivariant unitary isomorphism
$$\tilde W_{(H,\phi,\rho)}:(V^{+}\otimes H, \pi^{+}\otimes \rho, \nu^{+}\otimes \mu)\to (V^{-}\otimes H,  \pi^{-}\otimes \rho,\nu^{-}\otimes \mu)$$ 
by
$$\tilde W_{(H,\phi,\rho)}:=\sum_{gK\in G/K}  \pi^{-}(g)W\pi^{+}(g^{-1})\otimes \mu(\{gK\}\times X)\ .$$
Note that the summands are well-defined since $W$ is $K$-equivariant. 
 This construction is natural in $(H,\rho,\mu)$
  and provides a unitary isomorphism of functors
$$
\xymatrix{ \bV^{G}((G/K)_{min,max}\otimes X) \ar@/^1cm/[rr]^{ (V^{+},\pi^{+})\otimes (-)}\ar@/_1cm/[rr]^{ (V^{-},\pi^{-})\otimes (-)}&\Downarrow \tilde{W} & \bV^{G}((G/K)_{min,max}\otimes X)}\ .  $$
At this point it is crucial that we consider the minimal coarse structure on the factor $G/K$ since this implies that $\tilde W$ induces a natural transformation.

The existence of the isomorphism $\tilde W$ implies that the multiplications by the classes   
$[V^{+},\pi^{+}]$ and $[V^{-},\pi^{-}]$ in $R(G)\cong \pi_{0} K\cX^{G}(*)$ on
$\pi_{*}K\cX^{G}((G/K)_{min,max}\otimes X)$ coincide.
Consequently, if $\rho$ in $R(G)$ satisfies $\rho_{|K}=0$, then $\rho$ acts trivially on
$\pi_{*}K\cX^{G}((G/K)_{min,max}\otimes X)$. 

We now proceed as in the proof of Lemma \ref{wfiowefwefewfewfw}.
By the result of Segal mentioned there we can choose $\eta$ in $R(G)$ such that $\eta_{|K}=0$ and $\Tr(\eta)(g)\not=0$ for all 
$g$ in $\gamma$.  Then $\eta$ is invertible in $\pi_{0} R_{(\gamma)}\cong R(G)_{(\gamma)}$.
By the universal property of the localization the multiplication by $\eta$ on
$K\cX_{(\gamma)}^{G}((G/K)_{min,max}\otimes X)$ is an equivalence, and  by the argument above it vanishes.
Therefore we have $K\cX_{(\gamma)}^{G}((G/K)_{min,max}\otimes X)\simeq 0$.
\end{proof}

%
%

Let $X$ be  a $G$-bornological coarse space and   $\gamma$ be a conjugacy class of $G$.
Let $T$ be an object of $G\Sp\cX$.  
 
\begin{theorem}[The Coarse  Segal Localization Theorem]\label{griuehiuhfergefrggeg} Assume:
\begin{enumerate} \item $G$ is finite.
\item    $\Yo^{s}_{G}(X^{\gamma})\to \Yo^{s}_{G}(\{X^{\gamma}\}) $ is an equivalence.
\item    $\Yo^{s}(X)$ and $\Yo^{s}(X^{\gamma})$ belong to $ G\Sp\cX\langle G\Orb \otimes  \Sp\cX \rangle$.\end{enumerate}
Then the inclusion $X^{\gamma}\to X$ induces an  equivalence
$$K\cX_{T,(\gamma)}^{G}(X^{\gamma})\stackrel{\simeq}{\to} K\cX_{T,(\gamma)}^{G} (X) \ .$$
\end{theorem}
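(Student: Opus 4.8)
The plan is to deduce this from the Coarse Abstract Localization Theorem II \ref{rgioewfwefewfewfewf} applied to a suitably twisted functor, combined with the comparison results of Section \ref{f42oifj2o3f32f32e}. First I would set $E := K\cX^{G}_{T,(\gamma)}$, which by the discussion preceding the theorem is an object of $\Fun^{\colim}(G\Spc\cX,\Mod(R_{(\gamma)}))$, hence in particular a $\Mod(R)$-valued equivariant coarse homology theory. Consider its Bredon-style approximation $E^{\Bredon}$ together with the comparison map $\beta_{E}\colon E^{\Bredon}\to E$ from \eqref{f24oif}. The strategy is to show that both $\beta_{E,\Yo_{G}(X)}$ and $\beta_{E,\Yo_{G}(X^{\gamma})}$ are equivalences, and that the Bredon-style statement $E^{\Bredon}(\Yo_{G}(X^{\gamma}))\to E^{\Bredon}(\Yo_{G}(X))$ is an equivalence, and then combine these in the evident commuting square.

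The three ingredients are handled as follows. For the comparison maps: since $G$ is finite and $\Yo^{s}(X),\Yo^{s}(X^{\gamma})$ lie in $G\Sp\cX\langle G\Orb\otimes\Sp\cX\rangle$ (using that this stable subcategory is generated by the same objects that control the unstable statement, via the equivalence \eqref{ewflkwenfjkewneiwjffefff21}), Corollary \ref{erwfiowfwefewfwf} gives that $\beta_{E,X}$ and $\beta_{E,X^{\gamma}}$ are equivalences. For the Bredon-style localization: Corollary \ref{gerhgiu34hiu34huferererg} applies provided $\underline{E}$ vanishes on $\cF(\gamma)$ and $X^{\gamma}$ is a nice subset of $X$; the first condition is precisely Proposition \ref{rgioefewfewfewf} (with $E=K\cX^{G}_{T,(\gamma)}$, noting $\underline{K\cX^{G}_{T,(\gamma)}}$ vanishes on $\cF(\gamma)$ — here one uses that twisting by $T$ and localizing at $(\gamma)$ commute with the relevant colimit, so the proposition as stated covers the $T$-twisted case), and the second is an assumption of the theorem. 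Hence $E^{\Bredon}(\Yo_G(X^{\gamma}))\stackrel{\simeq}{\to} E^{\Bredon}(\Yo_G(X))$.

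Putting this together, I would contemplate the commuting square
$$\xymatrix{E^{\Bredon}(\Yo_G(X^{\gamma}))\ar[r]\ar[d]_{\beta_{E,X^{\gamma}}}^{\simeq}&E^{\Bredon}(\Yo_G(X))\ar[d]^{\beta_{E,X}}_{\simeq}\\ E(\Yo_G(X^{\gamma}))\ar[r]&E(\Yo_G(X))}$$
in which the top horizontal map is an equivalence by Corollary \ref{gerhgiu34hiu34huferererg} and the two vertical maps are equivalences by Corollary \ref{erwfiowfwefewfwf}; therefore the bottom map, which is exactly $K\cX^{G}_{T,(\gamma)}(X^{\gamma})\to K\cX^{G}_{T,(\gamma)}(X)$, is an equivalence. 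Naturality of the comparison map $\beta$ in the variable $X$ (established right after \eqref{f24oif}, since $e^{G}$ is natural) is what makes the square commute.

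The main obstacle I anticipate is not the diagram chase but verifying that Proposition \ref{rgioefewfewfewf} genuinely applies to the $T$-twisted, $(\gamma)$-localized theory $K\cX^{G}_{T,(\gamma)}$ rather than just to $K\cX^{G}_{(\gamma)}$: one must check that $\underline{K\cX^{G}_{T,(\gamma)}}(S)(Y) = K\cX^{G}_{(\gamma)}(S_{min,max}\otimes T\otimes Y)$ and that the Segal-element argument (multiplication by $\eta$ with $\eta_{|K}=0$) still forces vanishing in the presence of the extra tensor factor $T$, which it does because the $R$-module structure is unaffected by twisting and $\eta$ still acts both invertibly (after localization) and as zero. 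A secondary technical point to be careful about is confirming that the hypothesis ``$\Yo^{s}(X^{\gamma})\in G\Sp\cX\langle G\Orb\otimes\Sp\cX\rangle$'' is the correct input for Corollary \ref{erwfiowfwefewfwf} as applied to the $\Mod(R_{(\gamma)})$-valued, $T$-twisted functor — this is fine since the target category is still cocomplete and stable and the comparison map is natural in $E$, so one may twist and localize freely.
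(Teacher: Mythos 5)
Your proposal is correct and follows essentially the same route as the paper: reduce to the Bredon-style approximation via Corollary \ref{erwfiowfwefewfwf} (valid because $G$ is finite and $\Yo^{s}(X),\Yo^{s}(X^{\gamma})\in G\Sp\cX\langle G\Orb\otimes\Sp\cX\rangle$), then invoke the Coarse Abstract Localization Theorem II through Proposition \ref{rgioefewfewfewf}. Your anticipated obstacle about the $T$-twist is in fact already dispatched inside Proposition \ref{rgioefewfewfewf}, whose proof reduces the $T$-twisted claim to the untwisted one precisely via the substitution $X=T\otimes Y$, and your commuting-square formulation of the naturality step is just a slightly more explicit rendering of the paper's ``we can replace the functor by its Bredon-style approximation.''
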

\begin{proof}
Because of our assumptions on $G$ and $X$, using Corollary \ref{erwfiowfwefewfwf}, we can replace
the functor
$K\cX_{T,(\gamma)}^{G}$  by its Bredon-style approximation
$(K\cX_{T,(\gamma)}^{G})^{\Bredon}$.
 By Proposition \ref{rgioefewfewfewf} $\underline{K\cX_{(\gamma)}^{G}}$  vanishes on $\cF(\gamma)$.
In view of Definition \ref{rgfrihgioffwfwfwefewf} we now apply the Theorem \ref{rgioewfwefewfewfewf} in order to get the assertion.
\end{proof}

%
%

\begin{rem}
One can deduce a special case of the  classical  Segal Localization Theorem~\ref{wriowwgwegewgewgewgwas} from Theorem~\ref{griuehiuhfergefrggeg} as follows. We consider finite $G$-simplicial complexes as $G$-uniform bornological coarse spaces with the structures induced from the spherical path metric on the simplices. After passing to a subdivision we can assume that if $g$ in $G$ fixes an interior point  of a simplex, then it fixes the whole simplex.

For a finite $G$-simplicial complex  $W$, essentially by the definition of $\cO^{\infty}_{\hlg}$, we have a natural equivalence \begin{equation}\label{rjf3iuf3f23fwwefwefwef} \Yo^{s}(\cO^{\infty}(W))\simeq \cO^{\infty}_{\hlg}(W)\ ,\end{equation}
where $\cO^{\infty}\colon G\UBC\to G\BC$ is the geometric version of the cone-at-$\infty$ functor (see \cite[Sec.~9]{equicoarse}). The latter sends a $G$-uniform bornological coarse space
$Z$ to the $G$-bornological coarse space $(\R_{d}\otimes Z)_{h}$, where $h$ indicates the hybrid structure on $\R_{d}\otimes Z$ associated to the family of subsets $((-\infty,n]\times Z)_{n\in \nat}$, and $\R_{d}$ has the uniform bornological coarse structures induced from the standard metric.

Combining the description of $KU_{G}$ given in Remark \ref{erthopetrghertge} with the  Paschke duality morphism
from \cite[Thm.~1.4]{bel-paschke} we get an equivalence 
$${\Sigma} KU^{G}_{G}\circ \tilde Y\simeq K\cX^{G}_{G_{can,min}}\circ \cO^{\infty}_{\hlg}$$
of functors from $G\Top$ to $\Sp$. If $W$ is a  finite $G$-simplicial complex, then we  have the equivalences 
$${\Sigma} KU^{G}_{G}(  W)\simeq K\cX_{G_{can,min}}^{G} ( \cO_{\hlg}^{\infty}(W))\simeq K\cX_{G_{can,min}}^{G}( \cO^{\infty}(W))\ ,$$
which are natural in $W$.

 We consider the fixed-point set  {$W^{\gamma}$}  with the uniform bornological coarse structure induced from $W$ (which might be different from the intrinsic one   if $W^{\gamma}$ is not connected).
The fixed point set 
   $W^{\gamma}$ is closed  and a deformation retract of an open neighbourhood. In view of Lemma \ref{gireogoerg34t34erg} we have  a canonical identification  $\cO^{\infty}(W^{\gamma})\cong \cO^{\infty}(W)^{\gamma}$, and
   using the homotopy invariance of $\Yo_{G}^{s}\circ \cO^{\infty}$ \cite[Cor.~9.38]{equicoarse} we furthermore get an equivalence $\Yo^{s}_{G}(\cO^{\infty}(W)^{\gamma})\to \Yo^{s}_{G}(\{\cO^{\infty}(W)^{\gamma}\}) $.

 By Lemma \ref{ergbioeoreggrwrgwergwrgrg} and the equivalence \eqref{rjf3iuf3f23fwwefwefwef}  we know that
 $\cO^{\infty}(W)$ belongs to  $ G\Sp\cX\langle G\Orb \otimes  \Spc\cX \rangle$. The same applies to   $\cO^{\infty}(W^{\gamma})$, since $W^{\gamma}$ is just a subcomplex.
 Therefore we can apply Theorem \ref{griuehiuhfergefrggeg} and conclude:
 \begin{kor}\label{fgwuhwuih23uir23rr}
 The   inclusion $W^{\gamma}\to W$ induces an equivalence
 $$KU^{G}_{G,(\gamma)}( W^{\gamma} )\stackrel{\simeq}{\to} KU^{G}_{G,(\gamma)}( W)\ .$$
   \end{kor}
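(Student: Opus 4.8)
The plan is to deduce this from the Coarse Segal Localization Theorem \ref{griuehiuhfergefrggeg}, applied to the $G$-bornological coarse space $X:=\cO^{\infty}(W)$ with the twist $T:=G_{can,min}$, and then to transport the resulting equivalence back along the comparison $\Sigma KU^{G}_{G}\circ \tilde Y\simeq K\cX^{G}_{G_{can,min}}\circ \cO^{\infty}_{\hlg}$ recorded in the preceding remark.

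First I would check the hypotheses of Theorem \ref{griuehiuhfergefrggeg} for $X=\cO^{\infty}(W)$. The group $G$ is finite by assumption. Using the canonical identification $\cO^{\infty}(W)^{\gamma}\cong \cO^{\infty}(W^{\gamma})$ together with the fact that, after the chosen subdivision, $W^{\gamma}$ carries an invariant tubular neighbourhood in $W$, one sees that $\cO^{\infty}(W)^{\gamma}$ is a nice subset of $\cO^{\infty}(W)$ in the sense of Definition \ref{gkerjgoerlgergqergregwergre}; this is exactly what the remark asserts. For the membership condition I would invoke Lemma \ref{ergbioeoreggrwrgwergwrgrg} together with the equivalence \eqref{rjf3iuf3f23fwwefwefwef}: since $W$ and its subcomplex $W^{\gamma}$ are $G$-CW complexes, both $\Yo^{s}(\cO^{\infty}(W))\simeq \cO^{\infty}_{\hlg}(W)$ and $\Yo^{s}(\cO^{\infty}(W^{\gamma}))\simeq \cO^{\infty}_{\hlg}(W^{\gamma})$ belong to $G\Sp\cX\langle G\Orb\otimes \Sp\cX_{bd}\rangle$, and hence to the larger subcategory $G\Sp\cX\langle G\Orb\otimes \Sp\cX\rangle$ required by the theorem.

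With the hypotheses verified, Theorem \ref{griuehiuhfergefrggeg} yields that the inclusion $\cO^{\infty}(W^{\gamma})\cong\cO^{\infty}(W)^{\gamma}\to \cO^{\infty}(W)$ induces an equivalence $K\cX^{G}_{G_{can,min},(\gamma)}(\cO^{\infty}(W^{\gamma}))\stackrel{\simeq}{\to}K\cX^{G}_{G_{can,min},(\gamma)}(\cO^{\infty}(W))$. The remaining step is to rewrite this in terms of $KU^{G}_{G,(\gamma)}$: since the suspension functor commutes with localization at $(\gamma)$, and the natural equivalence $\Sigma KU^{G}_{G}\circ\tilde Y\simeq K\cX^{G}_{G_{can,min}}\circ\cO^{\infty}_{\hlg}$ is an equivalence of module-spectrum-valued functors over coefficient ring spectra with $\pi_{0}\cong R(G)$ on both sides, this equivalence is compatible with the two $R(G)$-localizations. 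Applying it, using naturality in $W$ for the inclusion $W^{\gamma}\to W$, and desuspending (which reflects equivalences) gives the asserted equivalence $KU^{G}_{G,(\gamma)}(\tilde Y(W^{\gamma}))\stackrel{\simeq}{\to}KU^{G}_{G,(\gamma)}(\tilde Y(W))$.

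The step I expect to require the most care is not any single deep argument but the bookkeeping of compatibilities: confirming that $\cO^{\infty}(W)^{\gamma}\cong\cO^{\infty}(W^{\gamma})$ is an isomorphism of $G$-bornological coarse spaces (so that niceness and the membership in $G\Sp\cX\langle G\Orb\otimes\Sp\cX\rangle$ genuinely transfer along it), and verifying that the comparison equivalence $\Sigma KU^{G}_{G}\circ\tilde Y\simeq K\cX^{G}_{G_{can,min}}\circ\cO^{\infty}_{\hlg}$ is compatible with localization at $(\gamma)$ on both sides, i.e.\ that localizing and then identifying agrees with identifying and then localizing. The latter ultimately rests on the equivalence being realized as a map of modules over a common coefficient ring, a point I would have to extract from the cited sources rather than reprove here.
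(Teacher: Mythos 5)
Your proposal follows essentially the same route as the paper's own remark preceding the corollary: verify the hypotheses of Theorem \ref{griuehiuhfergefrggeg} for $X=\cO^{\infty}(W)$ with $T=\Yo^{s}_{G}(G_{can,min})$ (using the identification $\cO^{\infty}(W^{\gamma})\cong\cO^{\infty}(W)^{\gamma}$, the tubular-neighbourhood argument for niceness, and Lemma \ref{ergbioeoreggrwrgwergwrgrg} with \eqref{rjf3iuf3f23fwwefwefwef} for membership in $G\Sp\cX\langle G\Orb\otimes\Sp\cX\rangle$), then transport the resulting equivalence back along $\Sigma KU^{G}_{G}\circ\tilde Y\simeq K\cX^{G}_{G_{can,min}}\circ\cO^{\infty}_{\hlg}$. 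Your explicit flagging of the compatibility of the comparison equivalence with localization at $(\gamma)$ is a point the paper leaves implicit but is indeed the right thing to worry about and the right reason it works.
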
\hrB
\end{rem}

\subsection{Localization for equivariant coarse algebraic $K$-homology}\label{greiuhiufhiwefwefewfewf2r323r1}

In this section we apply the abstract localization results to equivariant coarse algebraic K-theory discussed in Example \ref{rgeroij4oi3ergergregergreg}. {The proofs are the adaptations of the proofs from the previous Section \ref{greiuhiufhiwefwefewfewf2r323r}.}

Let $Q$ be a commutative ring. We consider the additive category $\bQ:=\Mod(Q)^{{fg},free}$  of finitely generated free $Q$-modules.
 The tensor product turns $\bQ$ into a ring object in the symmetric monoidal category of additive categories.
 By definition, a   symmetric monoidal  additive category $\bA$ enriched over $Q$ is a  symmetric monoidal functor $\bQ\to \bA$.

   To every symmetric monoidal additive category $\bA$ {with strict $G$-action} we associate a lax-symmetric monoidal functor
$$\bV^{G}_{\bA}:G\BC\to \Add\ ,$$ see  \cite{symm} for details. The  symmetric monoidal functor $\bQ\to \bA$ gives a  transformation of lax symmetric monoidal  functors \begin{equation}\label{rewgerfercsd}\bV^{G}_{\bQ}\to \bV^{G}_{\bA}\ .  
\end{equation} 
 The algebraic $K$-theory of additive categories can be refined to a lax
  symmetric monoidal functor
$$K:\Add\to \Sp\ .$$ We then get a lax-symmetric monoidal functor (see  again \cite{symm} for details)
$$K\bA\cX^{G}:=K\circ \bV^{G}_{\bA}:G\BC\to \Sp$$ which is an 
 equivariant $\Sp$-valued coarse homology theory.  It is called the equivariant coarse   {algebraic} $K$-homology with coefficients in $\bA$.
 
In particular we get a commutative ring spectrum \begin{equation}\label{wevwlenkwefwefewewfew}R_{\bA}:=K\bA\cX^{G}(*) \end{equation}   and the functor
$K\bA\cX^{G}$ refines to a $\Mod(R_{\bA})$-valued equivariant  coarse homology theory which we will denote by the same symbol.

The symmetric monoidal functor $\bQ\to \bA$
 induces   a natural transformation of lax symmetric monoidal functors
$K\bQ\cX^{G}\to K\bA\cX^{G}$ which induces a ring homomorphism $R_{\bQ}\to R_{\bA}$.

 We note that $\bV^{G}_{\bQ}(*)$ is the symmetric monoidal category of representations of $G$
 on finitely generated free $Q$-modules which can also be written as $\Fun(BG,\bQ)$.
The ring completion of the semi-ring of isomorphism classes in $\bV^{G}_{\bQ}(*)$ will be denoted by $R_{Q}(G)$. 
We have a homomorphism of rings 
$$\kappa:R_{Q}(G)\to \pi_{0} R_{\bQ}\to\pi_{0} R_{\bA}\ .$$
If $K$ is a subgroup of $G$, then restriction   of the action from $G$ to $K$ induces a homomorphism of rings
$$R_{Q}(G)\to R_{Q}(K)\ , \quad  \eta\mapsto \eta_{|K}\ .$$

Let $K$ be a subgroup of $G$ and $X$ be in $G\BC$. Let $\eta$ be an element of $R_{Q}(G)$.
 
\begin{prop}\label{greuihiu4h4u3i4reeg}
If $\eta_{|K}=0$, then the multiplication by $\kappa(\eta)$ on
$\pi_{*}K\bA\cX^{G}((G/K)_{min,max}\otimes X)$ vanishes.
\end{prop}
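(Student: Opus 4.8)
The plan is to follow the proof of Proposition~\ref{rgioefewfewfewf} essentially line by line, replacing unitary $G$-representations and the $C^{*}$-category $\bV^{G}$ by $G$-representations on finitely generated free $Q$-modules and the additive category $\bV^{G}_{\bA}$, and replacing unitary isomorphisms by plain isomorphisms in $\bA$. First I would spell out the $R_{\bA}$-module structure on $K\bA\cX^{G}(Z)$: via the symmetric monoidal functor $\bQ\to\bA$ it is induced by the module structure of $\bV^{G}_{\bA}$ over $\bV^{G}_{\bQ}$ from \cite{symm}, so that an object $M$ of $\bV^{G}_{\bQ}(*)$ — a $G$-representation on a finitely generated free $Q$-module — acts on a $Z$-controlled $\bA$-object by tensoring with the $G$-equivariant $\bA$-object over the point obtained from $M$ via $\bQ\to\bA$. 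The crucial structural observation is that, since the factor $G/K$ in $Z:=(G/K)_{min,max}\otimes X$ carries the minimal coarse structure, every object $\mathcal{H}$ of $\bV^{G}_{\bA}(Z)$ decomposes canonically and controlledly as $\bigoplus_{gK\in G/K}\mathcal{H}_{gK}$, with $\mathcal{H}_{gK}$ supported over $\{gK\}\times X$ and $g'\cdot\mathcal{H}_{gK}=\mathcal{H}_{g'gK}$, this decomposition being natural in $\mathcal{H}$; in particular $(M\otimes\mathcal{H})_{gK}=M\otimes\mathcal{H}_{gK}$.

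Next, given $\eta\in R_{Q}(G)$ with $\eta_{|K}=0$, I would write $\eta=[M^{+}]-[M^{-}]$ with $M^{\pm}$ objects of $\bV^{G}_{\bQ}(*)$. From $[M^{+}_{|K}]=[M^{-}_{|K}]$ in $R_{Q}(K)$ one gets a finitely generated free $K$-representation $P$ with $M^{+}_{|K}\oplus P\cong M^{-}_{|K}\oplus P$ as $K$-representations. Since $P$ is a $K$-equivariant direct summand of $(\Ind_{K}^{G}P)_{|K}$ (the trivial-double-coset term in the Mackey decomposition), and $\Ind_{K}^{G}P$ is again finitely generated free over $Q$ when $[G:K]<\infty$, I may replace $M^{\pm}$ by $M^{\pm}\oplus\Ind_{K}^{G}P$ without changing $\eta$, and thereby assume there is an honest $K$-equivariant isomorphism $W\colon M^{+}_{|K}\xrightarrow{\;\cong\;}M^{-}_{|K}$.

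Then, imitating the formula in Proposition~\ref{rgioefewfewfewf}, I would define for each $\mathcal{H}=\bigoplus_{gK}\mathcal{H}_{gK}$ a morphism
$$\tilde W_{\mathcal{H}}\colon M^{+}\otimes\mathcal{H}\longrightarrow M^{-}\otimes\mathcal{H},\qquad \tilde W_{\mathcal{H}}\big|_{gK}:=\bigl(\pi^{-}(g)\,W\,\pi^{+}(g^{-1})\bigr)\otimes\id_{\mathcal{H}_{gK}},$$
where $\pi^{\pm}$ are the $G$-actions on $M^{\pm}$. Because $W$ is $K$-equivariant each summand is independent of the chosen coset representative; the morphism is controlled since it is diagonal over $X$ and over the coarsely discrete set $G/K$; and a short computation shows it is $G$-equivariant and invertible. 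Naturality of the canonical decomposition makes $\tilde W$ a natural isomorphism of endofunctors $M^{+}\otimes(-)\simeq M^{-}\otimes(-)$ of $\bV^{G}_{\bA}(Z)$. Applying $K\colon\Add\to\Sp$ turns this into a homotopy between the two self-maps of $K\bA\cX^{G}(Z)$ given by multiplication by the images of $[M^{+}]$ and of $[M^{-}]$; hence they agree on $\pi_{*}$, so multiplication by $\kappa(\eta)=\kappa([M^{+}])-\kappa([M^{-}])$ is zero on $\pi_{*}K\bA\cX^{G}((G/K)_{min,max}\otimes X)$. As in Proposition~\ref{rgioefewfewfewf}, by \eqref{g54ljklefwefewfwf} applied with $X$ replaced by $T\otimes Y$ this is precisely what is needed for the vanishing statements invoking this proposition.

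The step I expect to be the real obstacle is the first one: extracting from the definitions in \cite{symm} the precise shape of the $(G/K)_{min,max}\otimes X$-controlled $\bA$-objects and of the $\bV^{G}_{\bQ}$-action, and then verifying that the formula for $\tilde W$ genuinely defines a morphism of $\bV^{G}_{\bA}(Z)$ (controlledness and the relevant local finiteness) and is natural — this is the algebraic replacement for the projection-valued measures $\phi(\{gK\}\times X)$ used in the topological argument. The representation-theoretic stabilization is routine but should be recorded, since over a general commutative ring $Q$ the hypothesis $\eta_{|K}=0$ only yields a \emph{stable} $K$-equivariant isomorphism of the restrictions; if $G$ is not assumed finite in this section, that step needs slightly more care.
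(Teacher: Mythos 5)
Your proposal takes essentially the same approach as the paper: the paper's proof likewise spells out the $\bV^{G}_{\bQ}(*)$-module structure on $\bV^{G}_{\bA}((G/K)_{min,max}\otimes X)$ and, given an honest $K$-equivariant isomorphism $W\colon (V^{+},\pi^{+})_{|K}\to (V^{-},\pi^{-})_{|K}$, constructs the controlled natural isomorphism $\tilde W$ pointwise by exactly your formula $\pi^{-}(g)W\pi^{+}(g^{-1})\otimes\id$, using the minimality of the coarse structure on the $G/K$-factor to see it is controlled.

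Where you go beyond what the paper writes down is the reduction from the hypothesis $\eta_{|K}=0$ in $R_{Q}(K)$ (which a priori only yields a \emph{stable} $K$-equivariant isomorphism of the restrictions) to the honest isomorphism $W$ that the construction needs: the paper takes the existence of $W$ as given and then passes to the conclusion about $\eta$ in a single sentence, leaving your stabilization step implicit. Your fix via $M^{\pm}\oplus\Ind_{K}^{G}P$ is the natural one, and you are right to flag that it uses $[G:K]<\infty$; since the surrounding section does not globally assume $G$ finite (case \ref{oegjoi34jergegerg1} of Theorem \ref{giehrighiuhfiwebfjwefwefewfewf} allows infinite groups), this is a point a careful reader would want addressed — for instance by observing that, because objects of $\bV^{G}_{\bA}((G/K)_{min,max}\otimes X)$ decompose $G$-equivariantly over the cosets $G/K$, the $\bV^{G}_{\bQ}(*)$-action factors through restriction to $K$, so one may stabilize by the $K$-representation $P$ directly without inducing up.
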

\begin{proof}
We must explicitly understand the $R_{Q}(G)$-module structure of $\pi_{*}K\bA\cX^{G}(Y)$ for $Y$ in $G\BC$. It is induced by   the {morphism} of additive categories 
$$\bV_{\bQ}^{G}(*)\otimes  \bV_{\bA}^{G}(Y)\to \bV_{\bA}^{G}(Y)$$
which sends  
$((V,\pi),(H,\rho))$ to $(V\otimes H,\pi\otimes \rho)$. Here
$(V,\pi)$ is a    representation of $G$ on a finitely generated free $Q$-module, and $(H,\rho)$ is an equivariant $Y$-controlled $\bA$-object, i.e,    an equivariant cosheaf (determined on points) on the bornology $\cB_{Y}$ of $Y$ with values in $\bA$ \cite[Def.~8.3]{equicoarse}. 
With the obvious extension of the constructions above to morphisms we have  actually constructed an exact functor between additive categories 
$$(V,\pi)\otimes (-) :\bV_{\bA}^{G}((G/K)_{min,max}\otimes X)\to \bV_{\bA}^{G}((G/K)_{min,max}\otimes X)\ .$$
Assume now that $(V^{\pm},\pi^{\pm})$ are two    representation of $G$ on finitely generated projective $Q$-modules  such that there exists an  isomorphism $W:(V^{+},\pi^{+})_{|K}\to (V^{-},\pi^{-})_{|K}$. Then we define a $\diag(G/K\times X)$-controlled isomorphism
$$\tilde W_{(H,\rho)}:(V^{+}\otimes H,\pi^{+}\otimes \rho)\to (V^{-}\otimes H,\pi^{-}\otimes \rho)$$
by  
$$(\tilde W_{(H,\rho)})_{(gK,x)}:=\pi^{-}(g)W\pi^{+}(g^{-1})\otimes \id_{H(\{(gK,x)\})}:V^{+}\otimes H(\{(gK,x)\})\to V^{-}\otimes H(\{(gK,x)\})$$
for all points $(gK,x)$ in $G/K\times X$.
Since we consider the minimal coarse structure on the $G/K$-factor these isomorphisms provide a natural isomorphism of functors
$$
\xymatrix{ \bV_{\bA}^{G}((G/K)_{min,max}\otimes X) \ar@/^1cm/[rr]^{ (V^{+},\pi^{+})\otimes (-)}\ar@/_1cm/[rr]^{ (V^{-},\pi^{-})\otimes (-)}&\Downarrow  \tilde W & \bV_{\bA}^{G}((G/K)_{min,max}\otimes X)}\ .  $$
The existence of the isomorphism $\tilde W$ implies that the multiplications by the classes   
$\kappa([V^{+},\pi^{+}])$ and $\kappa([V^{-},\pi^{-}])$  on
$\pi_{*}K\bA\cX^{G}((G/K)_{min,max}\otimes X)$ coincide.
Consequently, if $\eta$ in $R_{Q}(G)$ satisfies $\eta_{|K}=0$, then $\eta$ acts trivially on
$\pi_{*}K\bA\cX^{G}((G/K)_{min,max}\otimes X)$. 
\end{proof}

 {Recall Definition \ref{rgioregggegergergerg} and Definition \ref{zfikgloglugilgilk}.}
Let $Q$ be a commutative ring and $\bA$ be a symmetric monoidal additive
 category enriched over $Q$. We  fix a conjugacy class $\gamma$ in $G$ and consider a prime ideal $I$ in the commutative ring $R_{Q}(G)$. {Let $T$ be  an object in $G\Sp\cX$.}
\begin{theorem}\label{giehrighiuhfiwebfjwefwefewfewf}
 Assume
 \begin{enumerate}\item \label{greighiuhiweergreg} One of 
 \begin{enumerate}
 \item \label{oegjoi34jergegerg} $G$ is finite and $X^{\gamma},X$ {belong} to $G\Sp\langle G\Orb\otimes \Sp\cX\rangle$.
 \item \label{oegjoi34jergegerg1}  $T{\simeq  \Yo^{s}_{G}(G_{can,min})}$ and $X^{\gamma},X$ {belong} to $G\Sp\langle G\Orb\otimes \Sp \cX_{bd}\rangle$.
 \end{enumerate}
 \item\label{gerihgiu4hiufergg} For every $K$ in $F(\gamma)$ there exists $\eta$ in $R_{Q}(G) $ with $ \eta_{|K}=0$ and which is invertible in $R_{Q}(G)_{(I)}$.
 \item $\Yo^{s}_{G}(X^{\gamma})\to \Yo^{s}_{G}(\{X^{\gamma}\}) $ is an equivalence.

\end{enumerate}
 Then the inclusion $X^{\gamma}\to X$ induces an equivalence
 $$K\bA\cX_{T,(I)}^{G}(X^{\gamma})\to K\bA\cX_{T,(I)}^{G}(X )\ .$$
 \end{theorem}
\begin{proof}
By Assumption \ref{greighiuhiweergreg} we can replace $K\bA\cX^{G}_{T,(I)}$ by its Bredon-style 
approximation. In Case  \ref{greighiuhiweergreg}.\ref{oegjoi34jergegerg} we use 
Corollary \ref{erwfiowfwefewfwf}, and in  Case \ref{greighiuhiweergreg}.\ref{oegjoi34jergegerg1}
we use Corollary \ref{erwfiowfwefewfwf1}, the fact that $K\bA\cX^{G}$ is continuous  \cite[Prop.~8.17]{equicoarse}, and Lemma \ref{fewfpowef23r2} implying that $K\bA\cX^{G}_{G_{can,min},(I)}$ is hyperexcisive.
Using Assumption \ref{gerihgiu4hiufergg} we show as in the proof of Proposition~{\ref{rgioefewfewfewf}} and using Proposition \ref{greuihiu4h4u3i4reeg} that
$\underline{K\bA\cX^{G}_{T,(I)}}$ vanishes on $F(\gamma)$.
The asserted equivalence now follows from Corollary \ref{gerhgiu34hiu34huferererg}.
\end{proof}

\begin{rem}
In general the Assumption \ref{giehrighiuhfiwebfjwefwefewfewf}.\ref{gerihgiu4hiufergg} is strong.
If $G$ is finite and $Q=\C$, 
 then it is satisfied  for the ideal $I=(\gamma)$ introduced in Definition \ref{rgiogerg43t34t43t} by the result of Segal~\cite{segal} mentioned in the proof of Lemma \ref{wfiowefwefewfewfw}. We therefore get:
\begin{theorem}\label{gerkhihi3434g4aa}
 Assume {that} $\bA$ is a symmetric monoidal additive category enriched over~$\C$ and:  
 \begin{enumerate}\item $G$ is finite. 
 \item \label{greighiuhiweergreg11aa} 
$X^{\gamma},X$ belong to $G\Sp\langle G\Orb\otimes \Sp\cX\rangle$.
 \item  $\Yo^{s}_{G}(X^{\gamma})\to \Yo^{s}_{G}(\{X^{\gamma}\}) $ is an equivalence.

\end{enumerate}
 Then the inclusion $X^{\gamma}\to X$ induces an equivalence
 $$K\bA\cX_{T,(\gamma)}^{G}(X^{\gamma})\to K\bA\cX_{T,(\gamma)}^{G}(X )\ .$$
 \end{theorem}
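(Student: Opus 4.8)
The plan is to reduce Theorem \ref{gerkhihi3434g4aa} to the already-proved Theorem \ref{gerkhihi3434g4}, exactly as the proof of the latter reduces to Theorem \ref{giehrighiuhfiwebfjwefwefewfewf}. Since $\bM$ is enriched over $\C$-vector spaces, it is a module category over $\bQ := \Mod(\C)^{fg,free}$, so we may form the square-zero extension $\bA := \bQ \oplus \bM$ as in the proof of Theorem \ref{gerkhihi3434g4} and obtain a fibre sequence of $R_{\bQ}$-module-valued equivariant coarse homology theories, which after $T$-twisting and localizing at the ideal $(\gamma) \subseteq R_{\C}(G)$ gives
$$K\bQ_{T,(\gamma)}\cX^{G}\to K\bA\cX_{T,(\gamma)}^{G}\to K\bM\cX_{T,(\gamma)}^{G} \to \Sigma K\bQ_{T,(\gamma)}\cX^{G}\ .$$
Evaluating on $\Yo_G(X^\gamma) \to \Yo_G(X)$ yields a map of fibre sequences, so it suffices to establish the localization equivalence for $K\bQ_{T,(\gamma)}\cX^{G}$ and $K\bA_{T,(\gamma)}\cX^{G}$ separately; the two-out-of-three property in the stable $\infty$-category then gives it for the cofibre $K\bM_{T,(\gamma)}\cX^{G}$.

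First I would observe that $\bQ$ and $\bA$ are symmetric monoidal additive categories receiving symmetric monoidal functors from $\bQ$ (the identity, resp. the inclusion of the first summand), so Theorem \ref{giehrighiuhfiwebfjwefwefewfewf} is applicable to each of them with the ring $R_\C(G)$ and the ideal $I = (\gamma)$. Hypothesis \ref{giehrighiuhfiwebfjwefwefewfewf}.\ref{greighiuhiweergreg}.\ref{oegjoi34jergegerg} holds because we are assuming $G$ finite and $X^\gamma, X \in G\Sp\cX\langle G\Orb\otimes \Sp\cX\rangle$; hypothesis \ref{giehrighiuhfiwebfjwefwefewfewf}.\ref{gerihgiu4hiufergg} is the key input and holds by Segal's result (\cite{segal}), invoked in the proof of Lemma \ref{wfiowefwefewfewfw}: for every $K \in F(\gamma)$ there is $\eta \in R(G) = R_\C(G)$ with $\eta_{|K} = 0$ and $\Tr\,\eta(g) \neq 0$ for all $g \in \gamma$, so $\eta \notin (\gamma)$ and hence $\eta$ is invertible in $R_\C(G)_{(\gamma)}$; and the niceness hypothesis is assumed. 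Thus Theorem \ref{giehrighiuhfiwebfjwefwefewfewf} gives the equivalences
$$K\bQ\cX_{T,(\gamma)}^{G}(X^{\gamma})\xrightarrow{\ \simeq\ } K\bQ\cX_{T,(\gamma)}^{G}(X)\ ,\qquad K\bA\cX_{T,(\gamma)}^{G}(X^{\gamma})\xrightarrow{\ \simeq\ } K\bA\cX_{T,(\gamma)}^{G}(X)\ ,$$
and the reduction above finishes the argument.

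I do not expect a genuine obstacle here — the statement is essentially a specialization of Theorem \ref{gerkhihi3434g4} to the case $Q = \C$, $I = (\gamma)$, where Assumption \ref{gerkhihi3434g4}.\ref{gerihgiu4hiufergg11} becomes automatic via Segal. The only mild point to be careful about is bookkeeping of the module structures: one must check that the square-zero extension and the resulting fibre sequence are compatible with forming the $T$-twist and with localizing at $I$, which works because $T$-twisting and $(-) \otimes_{R_{\bQ}} R_{\bQ,(I)}$ are both exact and colimit-preserving, hence send fibre sequences of $R_{\bQ}$-module-valued functors to fibre sequences. This is exactly the argument already carried out in the proof of Theorem \ref{gerkhihi3434g4}, so the present proof is just the observation that, when $Q = \C$ and $I = (\gamma)$, Segal's theorem supplies hypothesis \ref{gerkhihi3434g4}.\ref{gerihgiu4hiufergg11} for free.
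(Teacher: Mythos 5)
Your proposal is correct and matches the paper's proof exactly: the paper simply observes that Theorem \ref{gerkhihi3434g4} specializes to the present statement once Segal's result (invoked already in the proof of Lemma \ref{wfiowefwefewfewfw}) supplies Assumption \ref{gerkhihi3434g4}.\ref{gerihgiu4hiufergg11} when $Q=\C$ and $I=(\gamma)$. You re-derive the fibre-sequence reduction through Theorem \ref{giehrighiuhfiwebfjwefwefewfewf}, which is more detail than strictly needed since Theorem \ref{gerkhihi3434g4} is already available, but the argument is the same.
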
\hrB
\end{rem}

\begin{rem}
In  Theorems \ref{giehrighiuhfiwebfjwefwefewfewf} and \ref{gerkhihi3434g4aa} the assumption that $\bA$ is a symmetric monoidal category enriched over $Q$ can be relaxed. It suffices to assume that  $\bA$ is just enriched over $Q$, i.e., that $\bA$ is an additive module category over $\bQ$. Then
$\bV^{G}_{\bA}$ becomes a module over $\bV^{G}_{\bQ}$, and this suffices for the arguments.
We worked with the stronger assumption in order to be able to cite \cite{symm} directly in order
to get the  transformation of lax symmetric monoidal  functors \eqref{rewgerfercsd}. In order to formally
deduce the more general case from the special case considered here one could encode the
$\bQ$-module structure on $\bA$ in a square-zero extension of symmetric monoidal additive categories
$\bQ\to \bQ\oplus \bA$. We leave the details to the interested reader. \hrB
 \end{rem}

\begin{rem}
The localization theorems for {the}  equivariant coarse algebraic  $K$-homology {theory} $K\bA\cX^{G}$ implies a localization theorem for the equivariant homology theory   $K\bA^{G}:=K\bA\cX\circ \cO^{\infty}_{\hlg}$ {which is} similar to Corollary \ref{fgwuhwuih23uir23rr}.  We again leave the details to the interested reader.\hrB
\end{rem}

\bibliographystyle{alpha}

\end{document}